\theoremstyle{plain} 
\newtheorem{theorem}{Theorem}[section] 
\newtheorem{proposition}[theorem]{Proposition}
\newtheorem{lemma}[theorem]{Lemma} 
\theoremstyle{definition} 
\newtheorem{definition}[theorem]{Definition}
\newtheorem{remark}[theorem]{Remark}
\DeclareMathOperator{\Div}{div}
\DeclareMathOperator*{\osc}{osc}
\newcommand{\N}{\mathbb{N}}
\newcommand{\R}{\mathbb{R}}
\newcommand{\Sp}{\mathbb{S}}
\numberwithin{equation}{section}
\title{H\"older estimates for solutions of the Cauchy problem for the
porous medium equation with external forces}
\author{Masashi Mizuno}
\address[Masashi Mizuno]{Department of Mathematics, College of Science
and Technology, Nihon University, Tokyo 101-8308 JAPAN}
\email{mizuno@math.cst.nihon-u.ac.jp}
\subjclass[2000]{35K65,35B45,35B65}
\begin{document}

\begin{abstract}
 We study the interior H\"older regularity problem for weak solutions of
 the porous medium equation with external forces. Since the porous
 medium equation is the typical example of degenerate parabolic
 equations, H\"older regularity is a delicate matter and does not follow
 by classical methods. Caffrelli-Friedman, and
 Caffarelli-Vazquez-Wolansky showed H\"older regularity for the model
 equation without external forces. DiBenedetto and Friedman showed the
 H\"older continuity of weak solutions with some integrability
 conditions of the external forces but they did not obtain the
 quantitative estimates. The quantitative estimates are important for
 studying the perturbation problem of the porous medium equation. We
 obtain the scale invariant H\"older estimates for weak solutions of the
 porous medium equations with the external forces. As a particular case,
 we recover the well known H\"older estimates for the linear heat
 equation.
\end{abstract}

\maketitle


\section{Introduction}
We consider the following degenerate parabolic equation:
\begin{equation}
 \label{eq:1} 
  \left\{
  \begin{aligned}
   \partial_tu-\Delta u^m&=\Div{f}+g,\quad t>0\,,\,x\in\R^n, \\
   u(0,x)&=u_0(x)\geq0,\quad x\in\R^n,
  \end{aligned}
 \right.
\end{equation}
where $m>1$ is a constant,
$u=u(t,x):(0,\infty)\times\R^n\rightarrow\R$ is unknown,
$u_0=u_0(x):\R^n\rightarrow[0,\infty)$,
${f}={f}(t,x):(0,\infty)\times\R^n\rightarrow\R^n$ and
$g=g(t,x):(0,\infty)\times\R^n\rightarrow\R$ are given. For
${f},g\equiv0$, the equation \eqref{eq:1} is called the porous medium
equation. The equation \eqref{eq:1} is a degenerate parabolic equation
since the diffusion coefficient $m u^{m-1}$ may vanish. 
It is well-known that solutions of the degenerate parabolic equation
\eqref{eq:1} are not generally smooth even if the initial datum $u_0$ is
smooth enough. We now introduce the notion of weak solutions.

 \begin{definition}
  For $u_0\in L^1(\R^n)$ and for ${f},g\in
   L^1((0,\infty)\times\R^n)$, we call $u$ a weak solution of
   \eqref{eq:1} if there exists $T>0$ such that
  \begin{enumerate}
   \item $u(t,x)\geq0$\ \ for almost all $(t,x)\in [0,T)\times\R^n$;
   \item $u\in L^\infty(0,T\,;\,L^1(\R^n)\cap L^{m+1}(\R^n))$ with
	 $\nabla u^m\in L^2((0,T)\times\R^n)$;
   \item $u$ satisfies \eqref{eq:1}, namely 
	 for all $\varphi\in C^1(0,T\,;\,C_0^1(\R^n))$ and 
	 for almost all $0<t<T$,
       \begin{multline*}
	\int_{\R^n}u(t)\varphi(t)\,dx
	-\int_0^t\int_{\R^n}u\partial_t\varphi\,d\tau dx
	+\int_0^t\int_{\R^n}
	\nabla u^m\cdot\nabla\varphi\,d\tau dx \\
	=\int_{\R^n}u_0\varphi(0)\,dx
	-\int_0^t\int_{\R^n}f\cdot\nabla\varphi\,d\tau dx
	+\int_0^t\int_{\R^n}g\varphi\,d\tau dx.
       \end{multline*}       
  \end{enumerate}
 \end{definition}
 We remark that the existence of weak solutions is shown by
 Ole{\u\i}nik-Kala{\v{s}}inkov-{\v{C}}{\v{z}}ou~\cite{MR0099834} and
 J.\,L.\,Lions~\cite{MR0259693} (cf. \^Otani~\cite{MR2087494}). Our aim
 in this paper is to obtain a priori H\"older estimates for weak
 solutions of \eqref{eq:1}.

 Caffarelli-Friedman \cite{MR0570687} and Caffarelli-V\'azquez-Wolanski
 \cite{MR0891781} showed H\"older continuity for solutions of the porous
 medium equation. They essentially use a pointwise estimates for the
 derivative of solutions given by Aronson-Benilan~\cite{MR0524760} and
 the comparison principle for the porous medium equation.  For the
 general case with the external force \eqref{eq:1}, the Aronson-Benilan
 type estimate is not known.  In addition, if the equation involves
 non-local effect such as the system with other equations, the
 comparison principle does not generally hold. For instance, we consider
 the following degenerate Keller-Segel system:
 \begin{equation}
  \label{eq:45}
  \left\{
   \begin{aligned}
    \partial_tu-\Delta u^m+\Div(u\nabla\psi)&=0,&\quad&t>0,\ x\in\R^n, \\
    -\Delta\psi+\psi&=u,&\quad&t>0,\ x\in\R^n, \\
    u(0,x)&=u_0(x),&\quad &x\in\R^n.
   \end{aligned}
  \right.
 \end{equation}
 It is known by Sugiyama-Kunii~\cite{MR2235324} that there exists a time
 global bounded weak solution $(u,\psi)$ of \eqref{eq:45} in the case of
 $1\leq m\leq2-\frac2n$ and $n\geq3$ for small initial data. Regularity
 estimates of solutions of \eqref{eq:45} are closely related to the
 large time asymptotic behavior of solutions of \eqref{eq:45}
 (cf. Luckhaus-Sugiyama~\cite{MR2333473}, Ogawa~\cite{MR2482499},
 Ogawa-Mizuno~\cite{O-M}), however comparison principles do not hold for
 \eqref{eq:45}. Therefore it is an worth to derive the regularity of the
 weak solution of \eqref{eq:1} without using the comparison principle.

 On the other hand, DiBenedetto-Friedman~\cite{MR0783531},
 Wiegner~\cite{MR0886719} considered the  $p$-Laplace evolution
 equation:
 \begin{equation}
 \label{eq:2} \left\{
   \begin{aligned}
    \partial_tv-\Div(|\nabla v|^{p-2}\nabla v)&=0,\quad
    t>0\,,\,x\in\R^n, \\
    v(0,x)&=v_0(x),\quad x\in\R^n.
   \end{aligned}
  \right.
 \end{equation}
 The $p$-Laplace evolution equation is a typical example of degenerate
 parabolic equations. They showed the H\"older continuity for the
 gradient of the solutions of \eqref{eq:2} by using the method of
 alternative and intrinsic rescaling. Misawa~\cite{MR1939688} showed the
 gradient H\"older estimates for more general $p$-Laplace evolution
 equations.  We remark that they does not rely on the comparison
 principle for the $p$-Laplace evolution equation
 \eqref{eq:2}. Roughly speaking, the gradient of the solution may be
 regarded to satisfy \eqref{eq:1} with ${f},g\equiv0$ and it seems
 possible to apply their methods for solutions of \eqref{eq:1}. In fact,
 DiBenedetto-Friedman~\cite{MR0783531} showed H\"older continuity for
 solutions of \eqref{eq:1} with ${f},g\equiv0$ and $m>1$. They
 also mentioned the H\"older continuity of the weak solution of
 \eqref{eq:1} involving the external forces ${f}\in
 L^q(0,\infty\,;\,L^p(\R^n))$, $g\in
 L^\frac{q}{2}(0,\infty\,;\,L^{\frac{p}{2}}(\R^n))$ with
 $\frac2q+\frac{n}p<1$. In this paper we extend the above mentioned
 results to the case of general external forces, more specifically, we
 prove H\"older continuity for bounded weak solutions of \eqref{eq:1}.
 In addition, we obtain H\"older estimates with explicit dependence on
 the external forces ${f}$ and $g$.

 It is well-known that Harnack estimates are closely related to H\"older
 continuity of solutions (cf. Aronson-Caffarelli~\cite{MR0712265},
 DiBenedetto~\cite{MR0913961}, \cite{MR1230384} and
 DiBenedetto-Gianazza-Vespri~\cite{MR2413134}, \cite{MR2865434}). We
 remark that the porous medium equation \eqref{eq:1} is not additive, in
 particular for a solution $u$ of \eqref{eq:1} and a constant $k\in\R$,
 both $u-k$ and $k-u$ do not satisfy \eqref{eq:1}. Thus the Harnack
 inequality does not imply H\"older continuity of solutions of
 \eqref{eq:1} directly. DiBenedetto-Gianazza-Vespri~\cite{MR2413134},
 \cite{MR2865434} pointed out this fact and considered H\"older
 estimates for the singular porous medium equation, namely \eqref{eq:1}
 with $0<m<1$. It is also well-known that regularity of the gradient of
 solutions imply H\"older continuity. Gradient estimates for $p$-Laplace
 evolution equations are recently studied by
 Kinnunen-Lewis~\cite{MR1749438}, Acerbi-Mingione~\cite{MR2286632},
 Duzaar-Mingione~\cite{MR2823872} and Kuusi-Mingione~\cite{MR2813574}.

 Before stating our main theorem, we introduce weak $L^p$ spaces.
 \begin{definition}
 For a domain $\Omega\subset\R^n$ and an exponent $p>1$, a function
 $f\in L^1_{\mathrm{loc}}(\Omega)$ belongs to
 $L^p_{\mathrm{w}}(\Omega)$ if
 \[
 \|f\|_{L^p_{\mathrm{w}}(\Omega)}
 :=\sup_{K\subset\Omega\,:\,\text{compact}}
 \frac1{|K|^{1-\frac1p}}\int_K|f|\,dx<\infty.
 \]
 \end{definition}
 
\begin{remark}
 By H\"older inequality, we find $L^p(\Omega)\subset
 L^p_{\mathrm{w}}(\Omega)$. In fact, $L^p_\mathrm{w}(\Omega)$ is
 strictly larger than $L^p(\Omega)$ since $|x|^{-\frac{n}{p}}\notin
 L^p(\R^n)$ but belonging to $L^p_\mathrm{w}(\R^n)$ .
\end{remark}

Now, we state our main theorem.

 \begin{theorem}
  \label{thm:1} 
  Let $m>1$ and let $u$ be a bounded weak solution of
  \eqref{eq:1}. Assume ${f}\in L^q(0,\infty\,;\,L^p_{\mathrm{w}}(\R^n))$
  and $g\in
  L^\frac{q}{2}(0,\infty\,;\,L^\frac{p}{2}_{\mathrm{w}}(\R^n))$ for some
  $p,q>2$ satisfying $\frac2q+\frac{n}p<1$. Then, for all
  $\varepsilon>0$, the solution $u$ is uniform H\"older continuous with
  respect to $(t,x)$ in $(\varepsilon,\infty)\times\R^n$. Precisely,
  there exist constants $C,\sigma>0$ such that
  \begin{multline}
   \label{eq:6}
   |u(t,x)-u(s,y)| 
   \leq C(\|u\|_{L^\infty((0,\infty)\times\R^n)} \\
   +\|u\|^{\frac1q(1-\frac1m)}_{L^\infty((0,\infty)\times\R^n)} 
   \|{f}\|^\frac1m_{L^q(0,\infty\,;\,L^p_{\mathrm{w}}(\R^n))} 
   +\|u\|^{\frac2q(1-\frac1m)}_{L^\infty((0,\infty)\times\R^n)}
   \|g\|^\frac1m_{L^\frac{q}{2}(0,\infty\,;\,L^\frac{p}{2}_{\mathrm{w}}(\R^n))}) \\
   \times(\|u\|_{L^\infty((0,\infty)\times\R^n)}^{\frac\sigma2(1-\frac1m)}
  |t-s|^\frac{\sigma}2+|x-y|^\sigma)
  \end{multline} 
  for all $(t,x),(s,y)\in(\varepsilon,\infty)\times\R^n$, where
  $\sigma>0$ depends only on $n,m,p,q$ and $C>0$ depends only on $n,m,p,q,\varepsilon$.
 \end{theorem}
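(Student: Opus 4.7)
The plan is to adapt DiBenedetto's method of intrinsic rescaling together with the De Giorgi--Moser alternative to the forced equation \eqref{eq:1}, using the weak-$L^p$ integrability of ${f}$ and $g$ to generate a quantitative reduction-of-oscillation inequality of the form
\[
\osc_{Q_{\lambda r}} u \leq \eta\,\osc_{Q_r} u + C r^{\sigma} \Phi(\|u\|_\infty, \|{f}\|, \|g\|)
\]
on suitable intrinsic parabolic cylinders $Q_r$, with $\eta, \lambda \in (0,1)$; iterating this on a geometric sequence of radii then produces the estimate \eqref{eq:6}.

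First I would fix $(t_0,x_0)$ with $t_0>\varepsilon$ and work on intrinsic cylinders $Q_r(\omega) := B_r(x_0)\times(t_0-\omega^{1-m}r^2,t_0)$, where $\omega$ is comparable to $\osc_{Q_r(\omega)} u$; the factor $\omega^{1-m}$ is the usual DiBenedetto intrinsic time-scaling that makes the PME behave like a uniformly parabolic equation at that scale and reduces to the standard parabolic geometry when $m=1$. Then I would derive Caccioppoli inequalities for the truncations $(u-k)_\pm$ by testing \eqref{eq:1} against $(u^m-k^m)_\pm\phi^2$ for a parabolic cutoff $\phi$, using the identity $(u-k)(u^m-k^m)\asymp \bar u^{m-1}(u-k)^2$ to convert gradient terms into an energy of the truncation. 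The forcing contributes error terms
\[
\iint_{Q_r(\omega)} \mathbf{1}_{\{(u-k)_\pm>0\}}\bigl(|{f}|\,|\nabla((u-k)_\pm^m \phi^2)| + |g|\,(u-k)_\pm^m\phi^2\bigr)\,dx\,dt,
\]
which I would control using the weak-$L^p$ version of H\"older's inequality afforded by the definition above, together with the parabolic Sobolev embedding. The subcritical condition $\tfrac{2}{q}+\tfrac{n}{p}<1$ (with $p,q>2$) gives a genuine $\delta$-gain $r^{2\delta}$ over the dominant energy term, which is what permits absorption.

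With these Caccioppoli estimates I would execute the standard alternative in $Q_r(\omega)$: either the set $\{u \le \mu^- + \omega/2\}$ occupies a large measure fraction of $Q_r(\omega)$, in which case a De Giorgi iteration on levels $k_j = \mu^-+\omega/2 + 2^{-j}\omega/4$ promotes this measure information to a pointwise bound $u\le \mu^+-\eta\omega$ on a smaller cylinder, or the analogous scenario holds near the supremum, in which case a logarithmic-type estimate (working on $\log\tfrac{\omega}{\mu^+-u+s}$) propagates the smallness of $\{u>\mu^+-\omega/2\}$ across the full time-slice and yields the same kind of reduction. In each branch the external forces contribute only through the $\delta$-gain error, which is either strictly smaller than $\eta\omega$ (for $r$ small relative to the force norms) or can be absorbed into the next iteration step; this is where the precise scaling of the force norms in \eqref{eq:6} is fixed.

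Finally, iterating the reduction of oscillation on $Q_{\lambda^j r_0}(\omega_j)$ with $\omega_{j+1}=\max(\eta\omega_j,\,C r_0^\sigma\Phi)$ yields $\osc_{Q_{\lambda^j r_0}}u\lesssim \eta^j\omega_0 + r_0^\sigma\Phi$, and converting back from intrinsic to standard parabolic cylinders generates the factors $\|u\|_\infty^{(\sigma/2)(1-1/m)}$ in front of $|t-s|^{\sigma/2}$ and the $\|u\|_\infty^{(1/q)(1-1/m)}$, $\|u\|_\infty^{(2/q)(1-1/m)}$ factors in front of $\|{f}\|^{1/m}$ and $\|g\|^{1/m}$, producing \eqref{eq:6}. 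The main obstacle is the bookkeeping in the Caccioppoli absorption: one must track the exact dependence on $\omega$ so that the forcing contributions are genuinely of lower order in the intrinsic scaling, and this is precisely where the strict subcritical hypothesis $\tfrac{2}{q}+\tfrac{n}{p}<1$ and the use of \emph{weak} $L^p$ (rather than $L^p$) are essential, the latter because only the Morrey-type characterization of $\|\cdot\|_{L^p_{\mathrm w}}$ interacts cleanly with the intrinsic rescaling.
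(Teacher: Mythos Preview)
Your overall strategy---DiBenedetto's alternative combined with intrinsic rescaling, Caccioppoli estimates with force terms controlled via the weak-$L^p$ norm, a logarithmic estimate for one branch, and De Giorgi iteration---matches the architecture of the paper's proof. However, there is one substantive technical difference, and it is precisely the paper's main innovation.

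You base the intrinsic cylinders on the \emph{oscillation} $\omega$, taking time-length $\omega^{1-m}r^2$. The paper instead scales by the local \emph{maximum} $M\approx\sup_{Q}u$, working on cylinders $Q_{\rho,M}$ with time-length $\rho^2/M^{1-1/m}$, and maintains at each iteration step two separate quantities $\omega_j$ (oscillation bound) and $M_j=\max\{\mu^+_{j-1},\omega_j\}$ (scaling parameter). The paper explicitly flags this: because DiBenedetto--Friedman use the oscillation for rescaling, ``it seems difficult to obtain H\"older estimates of solutions'' with explicit constants; using $M$ instead, together with the condition $\sup u\leq M\leq 3\sup u$ which must be verified inductively (their inequality $\mu^+_{k-1}\leq\max\{\tfrac{3}{2(1-\eta_0)}\omega_k,\,3\mu^+_k\}$ is the key step), is what produces the precise $\|u\|_\infty$-dependence in \eqref{eq:6}. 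Your final paragraph asserts that converting from intrinsic to standard cylinders generates the factor $\|u\|_\infty^{(\sigma/2)(1-1/m)}$, but with $\omega$-based scaling you would instead get powers of $\omega_j$, which shrink along the iteration; the bookkeeping you identify as ``the main obstacle'' is exactly where the $M$-based scaling is needed. The paper also first substitutes $u\mapsto u^{1/m}$ and works with $\partial_t u^{1/m}-\Delta u=\Div f+g$, testing simply against $(u-k)_\pm\eta^2$ rather than against $(u^m-k^m)_\pm\phi^2$.

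One smaller point: you have the two branches of the alternative slightly mixed up. When the sub-level set $\{u<\mu^-+\omega/2\}$ has \emph{small} measure, a direct De Giorgi iteration gives the lower bound $u\geq\mu^-+\omega/4$; when it has \emph{large} measure, the argument for the upper bound $u\leq\mu^+-\eta\omega$ requires first finding a good time slice, then the logarithmic (Bernstein-type) estimate to propagate measure information forward in time, then a hole-filling argument, and only then a De Giorgi iteration near $\mu^+$ with levels $k_i=\mu^+-2^{-q_0}\omega+2^{-(q_0+i+2)}\omega$. Your levels $k_j=\mu^-+\omega/2+2^{-j}\omega/4$ sit near $\mu^-$ and would not yield an upper bound.
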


When the initial datum $u_0$ is bounded positive and the external force
$\Div{f}+g$ is bounded, then solutions of \eqref{eq:1} is bounded on
some time interval $(0,T)$ by the maximum
principle. Sugiyama-Kunii~\cite{MR2235324} and Ogawa~\cite{MR2482499}
showed the boundedness for the solution and rescaled solution of
\eqref{eq:45} hence we may apply our results for \eqref{eq:45}.

We emphasize that our estimate \eqref{eq:6} is \emph{scale
invariant}. In fact, for the parameter $M>0$, we consider the scale
transform
\begin{equation}
 \label{eq:7}
 \begin{aligned}
  t&=\frac{1}{M^{m-1}}s,\ \,&
  u_{M}(s,x)&=\frac{1}Mu(t,x),\\
  {f}_{M}(s,x)&={f}(t,x),\ \,&
  g_{M}(s,x)&=g(t,x).
 \end{aligned}
\end{equation}
Then $u_{M}$ satisfies
\[
 \partial_su_{M}-\Delta u_{M}^m
 =\Div\left(\frac{1}{M^{m}}{f}_{M}\right)
 +\left(\frac{1}{M^{m}}g_{M}\right)
\]
and Theorem \ref{thm:1} implies
  \begin{multline*}
   |u_{M}(s,y)-u_{M}(s',y')|
   \leq C\biggl(\|u_{M}\|_{L^\infty((0,\infty)\times\R^n)} \\
   +\left(\frac{1}{M^{m}}\right)^\frac1m
   \|u_{M}\|^{\frac1q(1-\frac1m)}_{L^\infty((0,\infty)\times\R^n)}
   \|{f}_{M}\|^\frac1m_{L^q(0,\infty\,;\,L^p_{\mathrm{w}}(\R^n))} \\
   +\left(\frac{1}{M^{m}}\right)^\frac1m
   \|u_{M}\|^{\frac2q(1-\frac1m)}_{L^\infty((0,\infty)\times\R^n)}
   \|g_{M}\|^\frac1m_{L^\frac{q}{2}(0,\infty\,;\,L^\frac{p}{2}_{\mathrm{w}}(\R^n))}\biggr) \\
   \times(\|u_{M}\|_{L^\infty((0,\infty)\times\R^n)}^{\frac\sigma2(1-\frac1m)}
  |s-s'|^\frac{\sigma}2+|y-y'|^\sigma).
  \end{multline*}
By the scale transform \eqref{eq:7}, we have \eqref{eq:6} hence we find
that the H\"older estimates \eqref{eq:6} is invariant for the scale
transform \eqref{eq:7}. 

Our H\"older estimates \eqref{eq:6} is some generalization for the case
of the heat equation. Actually, letting $m\rightarrow1$, we find
that the constant $C>0$ is bounded and the H\"older exponent $\sigma>0$
is away from $0$. Therefore the estimates \eqref{eq:6} implies the
well-known H\"older estimates for the case of the heat equation.

The basic strategy to prove Theorem \ref{thm:1} is to use the method of
alternative and intrinsic scaling by
DiBenedetto-Friedman~\cite{MR0783531}. Since they use the local
oscillation of solutions as the intrinsic scaling, it seems difficult to
obtain H\"older estimates of solutions.  On the other hand, we use the
local maximum of solutions as the intrinsic scaling and we make the more
exact Caccioppoli estimate.  The Caccioppoli estimate plays an important
role to show the method of alternative.  Reconstructing the iteration
argument, we obtain the H\"older estimates of solutions.

For an application, we may consider the external force as the
perturbation of solutions(cf. Ogawa-Mizuno~\cite{O-M}). Applying our
theorem, we do not need $L^p$ integrability of the external force, but
growth order of $L^2$ integral. Therefore, it is useful to study $L^2$
theory of non-linear degenerate parabolic equations. Furthermore, we can
exactly estimate the H\"older norm of solutions by the external force
and the maximum of solutions.

The paper is organized as follows. In section 2, we first give an
alternative lemma and show Theorem \ref{thm:1} using the alternative
lemma. The alternative lemma gives either the better lower bounds or the
better upper bounds of solutions. We show the lower bounds of the
solution in section 3 and the upper bounds of solution in section 4. In
appendix, we give some fundamental results of calculus which are
necessary for the proof of the main theorem.

At the end of this section, we introduce some notations. For
$\rho,M,\theta_0>0$ and $t_0\in\R$, we let open intervals
\[
 I_\rho(t_0)=(t_0-\rho^2,t_0),\quad
 I_\rho^{\theta_0}(t_0)=\left(t_0-\frac{\theta_0}2\rho^2,t_0\right)
\]
and
\[
 I_{\rho,M}(t_0)=\left(t_0-\frac{\rho^2}{M^{1-\frac1m}},t_0\right),\quad
 I_{\rho,M}^{\theta_0}(t_0)=
 \left(t_0-\frac{\theta_0}2\frac{\rho^2}{M^{1-\frac1m}},t_0\right).
\]
For $\rho>0$ and $x_0\in\R^n$, we denote the $n$-dimensional open ball
with radius $\rho$ and center $x_0$ by $B_\rho(x_0)$. We define
parabolic cylinders $Q_\rho(t_0,x_0),\ Q^{\theta_0}_\rho(t_0,x_0)$ and
modified parabolic cylinders $Q_{\rho,M}(t_0,x_0),\
Q^{\theta_0}_{\rho,M}(t_0,x_0)$ by
\[
 Q_\rho(t_0,x_0)=I_\rho(t_0)\times
 B_\rho(x_0),\quad Q^{\theta_0}_\rho(t_0,x_0)=I^{\theta_0}_\rho(t_0)\times
 B_\rho(x_0)
\]
and
\[
 Q_{\rho,M}(t_0,x_0)=I_{\rho,M}(t_0)\times
 B_\rho(x_0),\quad
 Q^{\theta_0}_{\rho,M}(t_0,x_0)=I^{\theta_0}_{\rho,M}(t_0)\times
 B_\rho(x_0).
\]
We often abbreviate the center of parabolic cylinders $(t_0,x_0)$. We
denote the $n$-dimensional Lebesgue measure by $m_n$. A function space
$L^q(I_{\rho,M};L^p_{\mathrm{w}}(B_\rho))$ is abbreviated to
$L^q(L^p_{\mathrm{w}})(Q_{\rho,M})$ and another function spaces are also
same. For a function $f$ on a set $A$, we denote the oscillation of $f$
in $A$ by $\osc_{A}f:=\sup_Af-\inf_Af$.  We denote the positive part of
$f$ and the negative part of $f$ by $f_+:=\max\{0,f\}$ and
$f_-:=\max\{0,-f\}$, respectively. We remark that a superscript plus or
minus are different of the positive part or the negative part.  For a
constant $k\in\R$ and a function $f$ on a set $\Omega$, we let
\[
 \{f>k\}:=\{x\in\Omega:f(x)>k\}
\]
and other level sets such as $\{f<k\}$ are defined in a similar manner.
We put $\sigma_0=1-\frac2q-\frac{n}p$ and $h(\rho,M,\omega):=
\big\|{f}\big\|^2_{L^q(L^p_\mathrm{w})(Q_{\rho,M})}
+\omega\|g\|_{L^\frac{q}2(L^{\frac{p}2}_\mathrm{w})(Q_{\rho,M})}$.
We denote a constant depending on $m,\beta,\dots$ by
$C(m,\beta,\dots)$. The same letter $C$ will be used to denote
different constants. We use subscript numbers if we consider the
relation between the constants.
For a open interval $(a,b)\subset\R$ and a open ball
$B_\rho(x_0)\subset\R^n$, we call $\eta=\eta(t,x)$ a cut-off function in
$Q=(a,b)\times B_\rho(x_0)$ if $\eta\in C^\infty(Q)$ satisfies
\[
  \eta(t,x)\equiv0\quad a\leq t\leq b,\quad x\in\partial B_\rho(x_0)
  \quad\text{and}\quad
  \eta(a,x)\equiv0\quad x\in B_\rho(x_0).
\]

\begin{figure}
 \begin{center}
 \includegraphics[width=5cm]{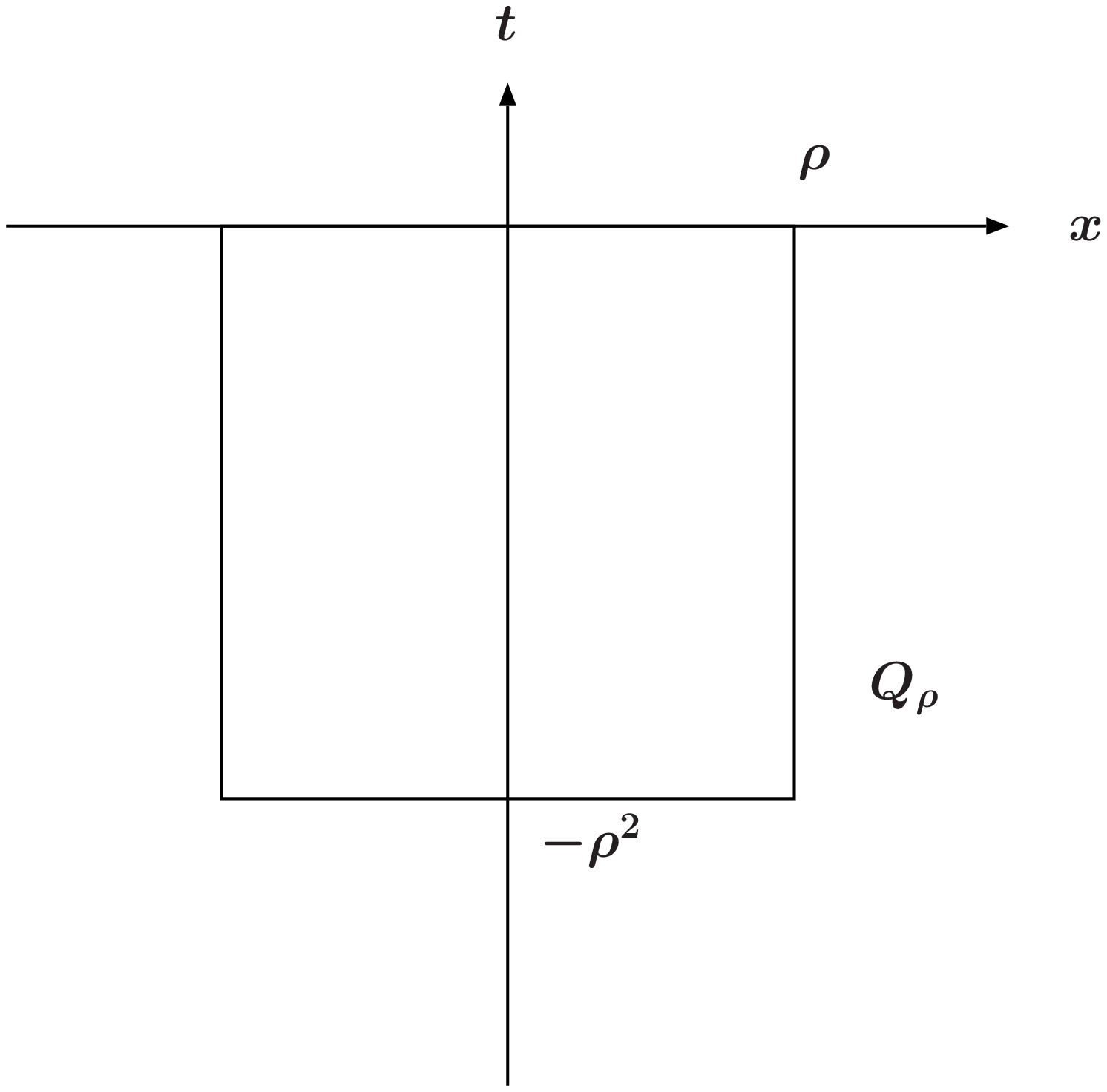}
  \hspace{1cm}
 \includegraphics[width=5cm]{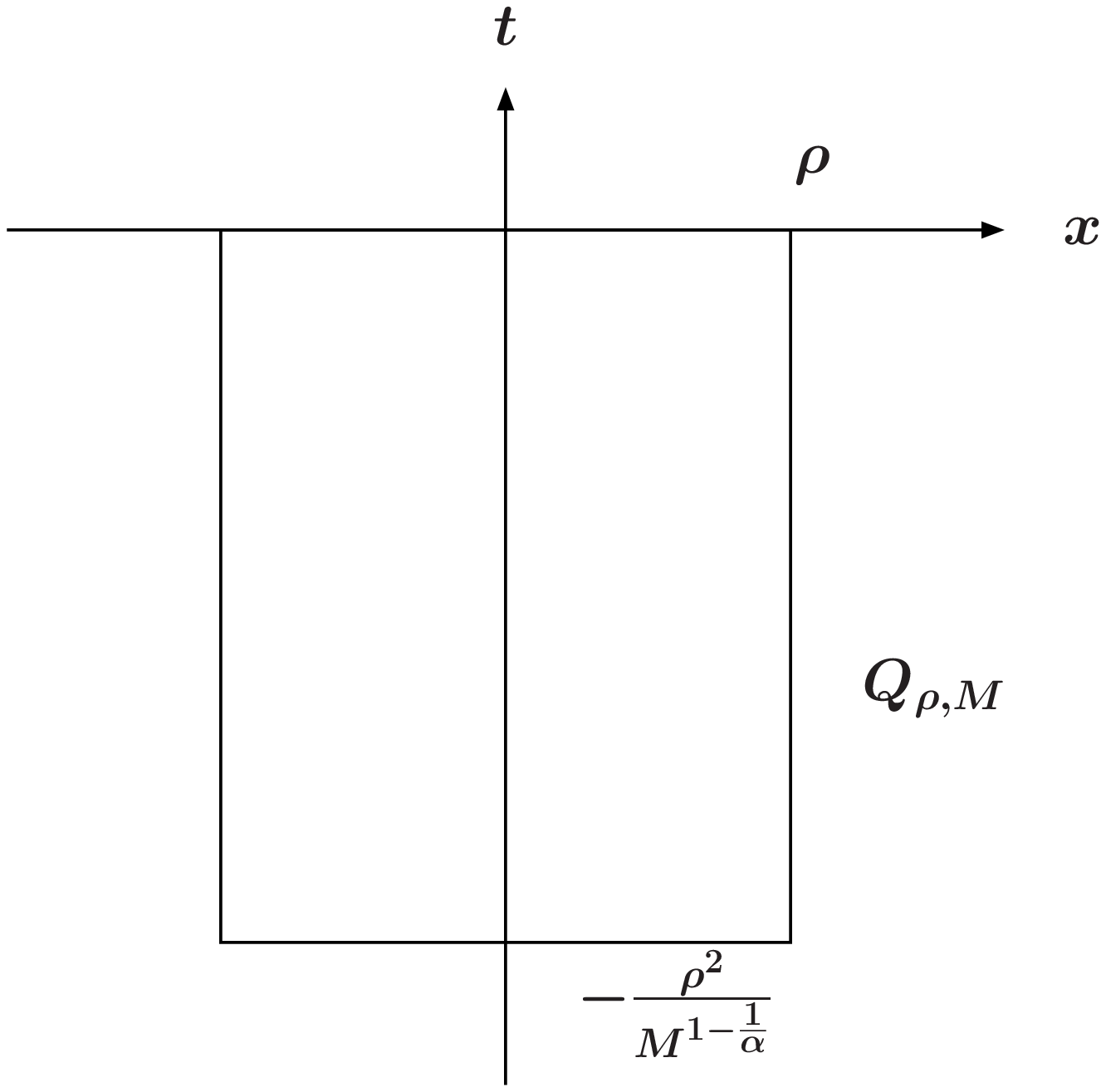}
  \caption{the usual parabolic cylinder and the modified parabolic cylinder}
 \end{center}
\end{figure}



\section{Alternative lemma and proof of the main theorem}
We hereafter replace $u^m$ by $u$ and we consider the following
equation:
\begin{equation}
 \label{eq:3}
  \partial_tu^\frac1m-\Delta u=-\Div {f}+g.
\end{equation}
Let $M$ and $\omega$ be an approximated supremum and oscillation of the
weak solution $u$ of \eqref{eq:3}, namely
\begin{equation}
 \label{eq:4}
  \sup_{Q_{\rho,M}(t_0,x_0)}u\leq M\leq 3 \sup_{Q_{\rho,M}(t_0,x_0)}u,
\end{equation}
and
\begin{equation}
 \label{eq:5}
  \frac34\omega\leq\osc_{Q_{\rho,M}(t_0,x_0)}u\leq \omega.
\end{equation}

\begin{lemma}[alternative lemma]
 \label{lem:1}
 Let us assume \eqref{eq:4} and \eqref{eq:5}. Then there exist constants
 $0<\theta_0,\eta_0<1$ and $\delta_0>0$ depending only on $n,m,p,q$
 such that for all $\rho>0$ satisfying
 $\rho^{\sigma_0}\leq\delta_0\omega M^{-\frac1q({1-\frac1m})}
 h(\rho,M,\omega)^{-\frac12}$,
 we obtain the following estimates:
 \begin{enumerate}[(i)]
  \item \label{item:1}
	Lower bounds. If
	\begin{equation*}
	  m_{n+1}\left(
		  Q_{\rho,M}(t_0,x_0)
		  \cap\biggl\{u<\inf_{Q_{\rho,M}(t_0,x_0)}u+\frac\omega2\biggr\}
		      \right) 
	 \leq\theta_0m_{n+1}\Bigl(Q_{\rho,M}(t_0,x_0)\Bigr),
	\end{equation*}	 
	then
	\[
	 u(t,x)\geq\inf_{Q_{\rho,M}(t_0,x_0)}u+\eta_0\omega\quad\text{for}\ 
	(t,x)\in Q_{\frac\rho2,M}(t_0,x_0);
	\]
  \item \label{item:2}
	Upper bounds. If
	\begin{equation*}
	 m_{n+1}\left(
		 Q_{\rho,M}(t_0,x_0)
		 \cap\biggl\{u<\inf_{Q_{\rho,M}(t_0,x_0)}u+\frac\omega2\biggr\}
		\right) 
	 >\theta_0m_{n+1}\Bigl(Q_{\rho,M}(t_0,x_0)\Bigr),
	\end{equation*}	
	then
	\[
	u(t,x)\leq\sup_{Q_{\rho,M}(t_0,x_0)}u-\eta_0\omega\quad\text{for}\ 
	(t,x)\in Q^{\theta_0}_{\frac\rho2,M}(t_0,x_0).
	\]
 \end{enumerate}
\end{lemma}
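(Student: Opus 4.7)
The plan is to prove both alternatives by De Giorgi iteration performed on the intrinsically scaled cylinders $Q_{\rho,M}$ and $Q^{\theta_0}_{\rho,M}$. The common engine is a Caccioppoli estimate for the truncations $(u-k)_\pm$ in which the external forces $f,g$ contribute a term controlled by $h(\rho,M,\omega)$, followed by a nonlinear recursion of the form $Y_{j+1}\le Cb^jY_j^{1+\kappa}$ for the normalized measures of the level sets. The hypothesis
\[
 \rho^{\sigma_0}\le\delta_0\,\omega\,M^{-\frac1q(1-\frac1m)}\,h(\rho,M,\omega)^{-1/2}
\]
is tailored so that the force contribution to this energy inequality is absorbable as $\delta_0^2\omega^2$ times the level-set measure.

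First I would derive the Caccioppoli inequality. For a cut-off $\eta$ on $Q_{\rho,M}$, testing \eqref{eq:3} against $\pm(u-k)_\pm\eta^2$ and using the primitive $\phi_\pm(u,k):=\int_k^u(s-k)_\pm\,\tfrac{1}{m}s^{1/m-1}\,ds$, which satisfies $\phi_\pm(u,k)\asymp M^{-(1-1/m)}(u-k)_\pm^2$ for $u\in[0,M]$, handles the $\partial_tu^{1/m}$ term and yields the standard bound on $\sup_{t\in I_{\rho,M}}\int_{B_\rho}\phi_\pm\eta^2\,dx+\iint_{Q_{\rho,M}}|\nabla(u-k)_\pm|^2\eta^2\,dx\,dt$. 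The factor $M^{-(1-1/m)}$ matches exactly the time length of $I_{\rho,M}$, so the parabolic Sobolev embedding on $Q_{\rho,M}$ recovers its usual exponent $\tfrac{n+2}{n}$. The forces are treated via the elementary weak-$L^p$ H\"older bound $\int_E|f|\,dx\le\|f\|_{L^p_{\mathrm w}}|E|^{1-1/p}$, combined with Sobolev applied to $(u-k)_\pm\eta$; the resulting contribution has the form $C\,h(\rho,M,\omega)\,M^{\frac{2}{q}(1-\frac1m)}\,\omega^2\,\rho^{-2\sigma_0}\,|\{(u-k)_\pm>0\}|$, which by the hypothesis on $\rho^{\sigma_0}$ is bounded by $\delta_0^2\omega^2$ times the level-set measure and can therefore be absorbed.

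For case (i) I would iterate with levels $k_j=\inf u+\omega/4+\omega\,2^{-j-2}$, radii $\rho_j=\rho/2+\rho\,2^{-j-1}$, and the nested cylinders $Q_{\rho_j,M}$. The hypothesis provides the starting measure $Y_0:=|Q_{\rho_0,M}\cap\{u<k_0\}|/|Q_{\rho_0,M}|\le\theta_0$. The Caccioppoli estimate for $(k_j-u)_+$ combined with the parabolic Sobolev embedding yields the recursion $Y_{j+1}\le C\,b^j\,Y_j^{1+\kappa}$ with some $\kappa=\kappa(n,m,p,q)>0$; choosing $\theta_0$ small in terms of $C,b,\kappa$ forces $Y_j\to 0$ and gives $u\ge\inf u+\omega/4$ a.e.\ on $Q_{\rho/2,M}$, so $\eta_0=1/4$. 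Case (ii) follows the same outline but requires one extra step, because the global volume hypothesis does not by itself seed an upper iteration. I would insert a logarithmic energy estimate, testing \eqref{eq:3} against a function of the form $\Psi'(u)\eta^2$ with $\Psi(u)=\bigl(\ln^+\!\bigl(\tfrac{\omega/2}{\sup u-\omega/4-u+\varepsilon\omega}\bigr)\bigr)^2$, to propagate the measure-theoretic hypothesis into the pointwise-in-time statement that on a sub-interval of length $\asymp\theta_0\rho^2/M^{1-1/m}$ one has $|B_\rho\cap\{u>\sup u-c\omega\}|\le(1-\theta_0/2)|B_\rho|$ for every time. On that favorable strip the upper De Giorgi iteration on $(u-k_j)_+$ in $Q^{\theta_0}_{\rho_j,M}$ then runs exactly as in case (i) and concludes $u\le\sup u-\omega/4$ a.e.\ on $Q^{\theta_0}_{\rho/2,M}$.

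The main obstacle is the weak-$L^p$ force estimate: a direct H\"older on strong $L^p$ would destroy the scale-invariant structure of \eqref{eq:6}, so one must go through the elementary weak-$L^p$ inequality and pair it with the parabolic Sobolev inequality applied to $(u-k)_\pm\eta$ in order to produce the factor $\rho^{2\sigma_0}$, $\sigma_0=1-\tfrac2q-\tfrac{n}{p}>0$, required to invoke the hypothesis on $\rho$. Beyond this, careful bookkeeping of the powers of $M$ appearing both in the time rescaling through $\phi_\pm$ and in the $h(\rho,M,\omega)$-weighted force term is what keeps the final constants $\theta_0,\eta_0,\delta_0$ dependent only on $n,m,p,q$.
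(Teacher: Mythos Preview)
Your overall strategy matches the paper: Caccioppoli estimates on the intrinsic cylinders, De~Giorgi iteration for case~(i), and a logarithmic estimate to propagate measure information in time for case~(ii). There is, however, a genuine gap in your treatment of case~(ii).

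After the logarithmic estimate you obtain, for every $t$ in the favorable strip,
\[
 m_n\bigl(B_\rho\cap\{u(t)>\mu^+-c\omega\}\bigr)\le(1-\tfrac{\theta_0}{2})\,m_n(B_\rho).
\]
This is \emph{not} enough to launch the De~Giorgi iteration: the recursion $Y_{j+1}\le Cb^jY_j^{1+\kappa}$ only converges if $Y_0$ is smaller than a structural threshold $\nu=\nu(C,b,\kappa)\ll 1$, whereas $1-\theta_0/2$ is close to~$1$. The paper inserts an additional ``shrinking'' step (its Lemma~4.5) between the logarithmic estimate and the final iteration: for each dyadic level $k_j=\mu^+-\omega/2^{j}$, the De~Giorgi--Poincar\'e isoperimetric inequality combined with the Caccioppoli bound gives
\[
 \Bigl(\tfrac{\omega}{2^{j+1}}\Bigr)^2\,m_{n+1}\bigl(\{u>k_{j+1}\}\bigr)^2
 \le C\,m_{n+1}(Q^{\theta_0}_{\rho_0,M})\cdot
 m_{n+1}\bigl(\{k_j<u\le k_{j+1}\}\bigr),
\]
and summing over $j=r_0,\dots,q_0$ forces the measure at level $\mu^+-\omega/2^{q_0+1}$ below any prescribed~$\nu$, provided $q_0$ is large. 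Only then does the De~Giorgi iteration run. Consequently the conclusion of case~(ii) is $u\le\mu^+-\omega/2^{q_0+2}$, \emph{not} $u\le\mu^+-\omega/4$; the constant $\eta_0$ in the lemma is $\min\{1/4,\,2^{-q_0-2}\}$ with $q_0$ depending on $n,m,p,q,\theta_0$.

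Two smaller remarks. First, in your Caccioppoli bookkeeping the force contribution should carry $\rho^{+2\sigma_0}$, not $\rho^{-2\sigma_0}$; the hypothesis on $\rho^{\sigma_0}$ then gives an \emph{upper} bound, as you intend. Second, the paper does not reduce to a single recursion $Y_{j+1}\le Cb^jY_j^{1+\kappa}$ but keeps a coupled system for $Y_j$ and an auxiliary $Z_j$ measuring $\bigl(\int m_n(\{u(t)\lessgtr k_j\})^{q_*/p_*}dt\bigr)^{2/q_*}$, because the weak-$L^p$ force term naturally produces a mixed space--time exponent. Your single-sequence reduction can be made to work (the condition $\frac{2}{q}+\frac{n}{p}<1$ forces $\max(p,q)>n+2$, which is exactly what is needed), but this requires a separate Hölder interpolation that you have not written down.
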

We will prove part (\ref{item:1}), which is Proposition \ref{prop:1} in
Section \ref{sec:3} and part (\ref{item:2}), which is Proposition
\ref{prop:2} in Section \ref{sec:4}. According to Lemma \ref{lem:1}, we
obtain
\begin{equation}
 \label{eq:8}
  \osc_{Q^{\theta_0}_{\frac\rho2,M}(t_0,x_0)}u
  \leq\osc_{Q_{\rho,M}(t_0,x_0)}u-\eta_0\omega
  \leq(1-\eta_0)\omega
\end{equation}
provided
$\rho^{\sigma_0}\leq\delta_0\omega M^{-\frac1q({1-\frac1m})}
 h(\rho,M,\omega)^{-\frac12}$. We remark that we may take $\eta_0$ as
 small as we want since we obtain by \eqref{eq:8}
 \[
  \osc_{Q^{\theta_0}_{\frac\rho2,M}(t_0,x_0)}u
  \leq(1-\eta_0)\omega
  \leq (1-\eta)\omega
 \]
for any $0<\eta<\eta_0$.

\begin{remark}
 We explain an advantage to use the modified parabolic cylinder. For
 $\rho\ll1$ and $M>0$, we consider
 \[
  \partial_tu^\frac1m-\Delta u=-\Div {f}+g
 \quad \text{in}
 \ Q_{\rho,M}.
 \]
 Introducing the scale transform
 \begin{align*}
  t&=\frac{\rho^2}{M^{1-\frac1m}}s, &
  x&=\rho y, &&\\
  u_{\rho,M}(s,y)&=\frac1Mu(t,x), 
  & {f}_{\rho,M}(s,y)&={f}(t,x), &
  g_{\rho,M}(s,y)&=g(t,x), 
 \end{align*}
 we obtain 
 \begin{equation}
   \label{eq:10}
  \partial_su_{\rho,M}^\frac1m-\Delta_y u_{\rho,M}
 =-\Div\Bigl(\frac\rho{M}{f}_{\rho,M}\Bigr)
 +\frac{\rho^2}{M}g_{\rho,M}
 \quad \text{in}
 \ Q_1.
 \end{equation}
 Since $M$ can be regarded as the supremum of $u$ on $Q_{\rho,M}$ by the
 assumption \eqref{eq:4}, we may consider \eqref{eq:10} as the uniformly
 parabolic equation. Furthermore, in view of
 \[
 \begin{split}
  \bigg\|\frac{\rho}M{f}_{\rho,M}
  \bigg\|^2_{L^q(L^p_\mathrm{w})(Q_1)}
  &=\rho^{2(1-\frac2q-\frac{n}p)}M^{-2+\frac2q(1-\frac1m)}
  \|{f}\|^2_{L^q(L^p_\mathrm{w})(Q_{\rho,M})}, \\
  \bigg\|\frac{\rho^2}Mg_{\rho,M}
  \bigg\|_{L^\frac{q}{2}(L^\frac{p}{2}_\mathrm{w})(Q_1)}
  &=\rho^{2(1-\frac2q-\frac{n}p)}M^{-1+\frac2q(1-\frac1m)}
  \|g\|_{L^\frac{q}{2}(L^\frac{p}{2}_\mathrm{w})(Q_{\rho,M})}, \\
 \end{split} 
\]
 the inequality $1-\frac2q-\frac{n}p>0$ is the sufficient condition to
 ignore the external force.
\end{remark}

 We now show Theorem \ref{thm:1} by temporary admitting Lemma
 \ref{lem:1}. We put $Q={(0,\infty)\times\R^n}$, $M_0=\sup_{Q}u$ and
 $\omega_0=M_0$.  Let $\theta_0,\,\delta_0$ and $\eta_0$ be as in Lemma
 \ref{lem:1}. We choose $0<\rho_0<\varepsilon$ satisfying
 \[
 \rho_0^{\sigma_0}
 \leq\delta_0\omega_0M^{-\frac1q(1-\frac1m)}_0
 (\|f\|^2_{L^q(0,\infty\,;\,L^p_{\mathrm{w}}(\R^n))}
 +\omega_0
 \|g\|_{L^\frac{q}{2}(0,\infty\,;\,L^\frac{p}{2}_{\mathrm{w}}(\R^n))})^{-\frac12}. 
 \]
 For $(t_0,x_0)\in(0,\infty)\times\R^n$, We
 denote $Q_0=Q_{\rho_0,M_0}(t_0,x_0)\,,\,\mu^+_0=\sup_{Q_0}u$ and
 $\mu^-_0=\inf_{Q_0}u$. Then, we find
\[
 \left\{
 \begin{aligned}
 \osc_{Q_0}u&\leq\omega_0, \\ 
  \sup_{Q_0}u&\leq \sup_{Q}u\leq M_0, \\
  \rho_{0}^{\sigma_0}
  &\leq \delta_0\omega_0M_0^{-\frac1q(1-\frac1m)}
  h(\rho_0,M_0,\omega_0)^{-\frac12}.
 \end{aligned}
 \right.
\]
We choose
\[
 r_0:=\min\biggl\{(1-\eta_0)^\frac{1}{\sigma_0},\,
 \frac12\Bigl(\frac13\Bigr)^{\frac12(1-\frac1m)}
 \Bigl(\frac{\theta_0}{2}\Bigr)^\frac12\biggr\}
\]
and choose sequences as follows: For $j\in\N$, 
\begin{equation}
 \label{eq:40}
  \begin{aligned}
  \omega_j&:=(1-\eta_0)\omega_{j-1},& 
  \rho_j&:=r_0\rho_{j-1}, \\
  M_j&:=\max\{\mu^+_{j-1},\,\omega_j\},&
  Q_{j}&:=Q_{\rho_j,M_j}(t_0,x_0), \\    
  \mu^+_j&:=\sup_{Q_j}u,&
  \mu^-_j&:=\inf_{Q_j}u.
 \end{aligned} 
\end{equation}
Using Lemma \ref{lem:1}, we obtain the following oscillation estimates.

\begin{lemma}
 \label{lem:2} 
 Let $\{\omega_j,\rho_j,M_j,Q_j\}_{j=0}^\infty$ is defined
 by the above \eqref{eq:40}. Then for $0<\delta_0<1$ defined in Lemma
 \ref{lem:1} and for $j\in\N$, we obtain
 \begin{equation}
  \label{eq:9}
   \left\{
    \begin{aligned}
     \osc_{Q_{j}}u&\leq\omega_j, \\
     \sup_{Q_j}u&\leq \sup_{Q_{j-1}}u\leq M_j, \\ 
     \rho_{j}^{\sigma_0}
     &\leq \delta_0\omega_jM_j^{-\frac1q(1-\frac1m)}
     h(\rho_j,M_j,\omega_j)^{-\frac12}. 
    \end{aligned}
   \right.
\end{equation}
\end{lemma}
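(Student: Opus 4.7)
The plan is to prove Lemma \ref{lem:2} by induction on $j\in\N$. The case $j=0$ is precisely the three estimates verified immediately above the statement of the lemma. For the inductive step I assume \eqref{eq:9} at level $j-1$ and aim to invoke the alternative lemma on $Q_{j-1}$ with data $(\rho_{j-1},M_{j-1},\omega_{j-1})$, transferring its conclusion down to $Q_j$.

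First I would check the hypotheses of Lemma \ref{lem:1} on $Q_{j-1}$. The approximate supremum and oscillation conditions \eqref{eq:4}--\eqref{eq:5} follow from the inductive bound $\sup_{Q_{j-1}}u\leq M_{j-1}$ combined with $\osc_{Q_{j-1}}u\leq\omega_{j-1}$ and the design $M_{j-1}=\max\{\mu^+_{j-2},\omega_{j-1}\}$; the smallness of $\rho_{j-1}^{\sigma_0}$ is exactly the third line of \eqref{eq:9} at level $j-1$. The lemma then delivers
\[
 \osc_{Q^{\theta_0}_{\rho_{j-1}/2,M_{j-1}}(t_0,x_0)}u\leq (1-\eta_0)\omega_{j-1}=\omega_j.
\]

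The core step is then to show the nesting $Q_j\subset Q^{\theta_0}_{\rho_{j-1}/2,M_{j-1}}(t_0,x_0)$. The spatial inclusion $B_{\rho_j}(x_0)\subset B_{\rho_{j-1}/2}(x_0)$ is immediate from the built-in bound $r_0\leq 1/2$. The temporal inclusion reduces, after unraveling the intrinsic time scalings, to $r_0^2\leq (\theta_0/8)(M_j/M_{j-1})^{1-1/m}$; by the choice of $r_0$ this is equivalent to the comparison $M_j\geq M_{j-1}/3$. Establishing this comparison is the main obstacle: if $M_{j-1}=\omega_{j-1}$, then $M_j\geq\omega_j=(1-\eta_0)M_{j-1}$ gives the bound provided $\eta_0\leq 2/3$; while if $M_{j-1}=\mu^+_{j-2}$, the oscillation control on $Q_{j-2}$ yields $\mu^+_{j-1}\geq\mu^+_{j-2}-\omega_{j-2}$, which together with the relative smallness of $\omega_{j-2}$ compared with $M_{j-1}$ (again exploited by taking $\eta_0$ small, as allowed by the remark following Lemma \ref{lem:1}) gives $M_j\geq\mu^+_{j-1}\geq M_{j-1}/3$.

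With this nesting in hand, the first line of \eqref{eq:9} at level $j$ is immediate. The second line follows from $M_j\geq\mu^+_{j-1}\geq\sup_{Q_j}u$ together with $Q_j\subset Q_{j-1}$, verified by an essentially identical temporal computation using $r_0^2\leq(M_j/M_{j-1})^{1-1/m}$. For the third line I would use $r_0\leq(1-\eta_0)^{1/\sigma_0}$ to get $\rho_j^{\sigma_0}\leq(1-\eta_0)\rho_{j-1}^{\sigma_0}$, then combine with the inductive hypothesis and the monotonicities $M_j\leq M_{j-1}$ and $h(\rho_j,M_j,\omega_j)\leq h(\rho_{j-1},M_{j-1},\omega_{j-1})$, the latter coming from $Q_j\subset Q_{j-1}$ together with $\omega_j\leq\omega_{j-1}$ and the fact that the weak-$L^p$ norm introduced in the paper is monotone with respect to the underlying domain. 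The hardest point is thus the comparison $M_j\geq M_{j-1}/3$: it is precisely what forces the non-trivial definition $M_j=\max\{\mu^+_{j-1},\omega_j\}$ and is what makes the intrinsic time scaling of the modified cylinders iterate consistently.
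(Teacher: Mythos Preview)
Your outline has two genuine gaps, both tied to the hypotheses \eqref{eq:4} and \eqref{eq:5} of Lemma~\ref{lem:1}.

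First, the inductive hypothesis gives only the \emph{upper} bound $\osc_{Q_{j-1}}u\leq\omega_{j-1}$; it does \emph{not} give the lower bound $\tfrac34\omega_{j-1}\leq\osc_{Q_{j-1}}u$ required in \eqref{eq:5}. So you cannot apply Lemma~\ref{lem:1} at every step. The paper handles this by a dichotomy: if $\osc_{Q_{j-1}}u\leq\tfrac34\omega_{j-1}$, then since $\eta_0\leq\tfrac14$ one has $\tfrac34\omega_{j-1}\leq(1-\eta_0)\omega_{j-1}=\omega_j$, and the oscillation bound at level $j$ follows trivially once $Q_j\subset Q_{j-1}$ is checked; only when $\tfrac34\omega_{j-1}\leq\osc_{Q_{j-1}}u$ is Lemma~\ref{lem:1} invoked. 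The same dichotomy is needed to verify the upper bound $M_{j-1}\leq 3\mu^+_{j-1}$ in \eqref{eq:4}, which you also do not address: in the large-oscillation branch one has $\omega_{j-1}\leq\tfrac43\mu^+_{j-1}$, and this combined with a bound on $\mu^+_{j-2}$ yields $M_{j-1}\leq 3\mu^+_{j-1}$.

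Second, your argument for the comparison $M_j\geq M_{j-1}/3$ in the case $M_{j-1}=\mu^+_{j-2}$ does not work. From $\mu^+_{j-1}\geq\mu^+_{j-2}-\omega_{j-2}$ you would need $\omega_{j-2}\leq\tfrac23\mu^+_{j-2}$, but the assumption $M_{j-1}=\mu^+_{j-2}$ only says $\mu^+_{j-2}\geq\omega_{j-1}=(1-\eta_0)\omega_{j-2}$, i.e.\ $\omega_{j-2}\leq\tfrac{1}{1-\eta_0}\mu^+_{j-2}$, which is always \emph{larger} than $\mu^+_{j-2}$; taking $\eta_0$ small does not help. The paper instead proves the auxiliary inequality
\[
\mu^+_{j-2}\leq\max\Bigl\{\tfrac{3}{2(1-\eta_0)}\omega_{j-1},\,3\mu^+_{j-1}\Bigr\}
\]
via a separate dichotomy on whether $\mu^-_{j-2}\leq\tfrac13\mu^+_{j-2}$ (if so, $\mu^+_{j-2}\leq\tfrac32\omega_{j-2}$; if not, $\mu^+_{j-2}<3\mu^-_{j-2}\leq3\mu^+_{j-1}$). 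This inequality, not the oscillation bound you tried, is what drives both the ratio $M_j/M_{j-1}\geq\tfrac13$ and the verification of \eqref{eq:4}.
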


\begin{proof}%
 [Proof of Lemma \ref{lem:2}]
 By the definition of $M_j$, we obtain
 $\sup_{Q_j}u\leq \sup_{Q_{j-1}}u\leq M_j$. Since $r_0\leq
 (1-\eta_0)^\frac{1}{\sigma_0}$ and definition of $\omega_j$, we find  
 \[
 \rho_{j}^{\sigma_0}
 \leq \delta_0\omega_jM_j^{-\frac1q(1-\frac1m)}
 h(\rho_j,M_j,\omega_j)^{-\frac12}. 
 \]
 We show $\osc_{Q_{j}}u\leq\omega_j$.
 
 To show $\osc_{Q_{j}}u\leq\omega_j$, we make induction. First we
 consider the case $j=1$.  Either if $\osc_{Q_0}u\leq\frac34\omega_0$,
 then we find $Q_1\subset Q_0$ since
 $r_0\leq(\frac34)^{\frac12(1-\frac1m)}$ and
 \[
 \frac{M_1}{M_0}
 \geq\frac{\omega_1}{M_0}
 =(1-\eta_0)\geq\frac34.
 \]
 For this reason, we obtain
 \[
 \osc_{Q_1}u\leq
 \osc_{Q_0}u\leq\frac34\omega_0\leq(1-\eta_0)\omega_0=\omega_1.
 \]
 Otherwise, if
 $\frac34\omega_0\leq\osc_{Q_0}u\leq\omega_0$, we obtain
 $M_0=\omega_0\leq\frac43\mu^+_0$. Applying Lemma \ref{lem:1}, we
 find
 \[
  \osc_{Q^{\theta_0}_{\frac{\rho_0}{2},M_0}(t_0,x_0)}u\leq(1-\eta_0)\omega_0.
 \]
 Since $r_0\leq\frac12\Bigl(\frac13\Bigr)^{\frac12(1-\frac1m)}
 \Bigl(\frac{\theta_0}{2}\Bigr)^\frac12$, we have
 $Q_1\subset Q^{\theta_0}_{\frac{\rho_0}{2},M_0}(t_0,x_0)
 \subset Q_0$ and hence
 \[
 \osc_{Q_1}u
 \leq\osc_{Q^{\theta_0}_{\frac{\rho_0}{2},M_0}(t_0,x_0)}u
 \leq(1-\eta_0)\omega_0=\omega_1.
 \]
 In either case, we obtain \eqref{eq:9} for $j=1$.  Next we assume
 \eqref{eq:9} for $j\leq k$ and we show for $j=k+1$ using the following
 inequality:
 \begin{equation}
  \label{eq:41}  
   \mu^+_{k-1}\leq\max\biggl\{\frac3{2(1-\eta_0)}\omega_k\,,\,3\mu^+_k\biggr\}.
 \end{equation}
 To show \eqref{eq:41}, we consider the case
 $\mu^-_{k-1}\leq\frac13\mu^+_{k-1}$ first. Then
 \[
 \mu^+_{k-1}
 \leq\osc_{Q_{k-1}}u+\mu_{k-1}^-
 \leq\omega_{k-1}+\frac13\mu^+_{k-1}
 \]
 and hence $\mu^+_{k-1}\leq
 \frac32\omega_{k-1}=\frac{3}{2(1-\eta_0)}\omega_k$.  For the other
 case, namely if $\mu^-_{k-1}>\frac13\mu^+_{k-1}$, then we have
 $\mu^+_{k-1}<3\mu^-_{k-1}\leq 3\mu^-_{k}\leq3\mu^+_{k}$ and we obtain
 \eqref{eq:41}.

 We show \eqref{eq:9} for $j=k+1$. First we consider the case
 $\osc_{Q_k}u\leq\frac34\omega_k$ and we show $Q_{k+1}\subset Q_k$. Either
 if $M_k=\omega_k$, then
 \[
 \frac{M_{k+1}}{M_k}
 =\frac{M_{k+1}}{\omega_k}
 \geq\frac{(1-\eta_0)\omega_k}{\omega_k}
 =(1-\eta_0)\geq\frac34.
 \]
 Since $r_0\leq(\frac34)^{\frac12(1-\frac1m)}$, we obtain
 $Q_{k+1}\subset Q_k$.
 Otherwise, if $M_k=\mu^+_{k-1}$, we obtain by \eqref{eq:41}
 \[
 \begin{split}
  \frac{M_{k+1}}{M_k}
  =\frac{M_{k+1}}{\mu^+_{k-1}}
  &\geq\frac{M_{k+1}}{\max\Bigl\{\frac3{2(1-\eta_0)}\omega_k\,,\,
  3\mu^+_k\Bigr\}} \\
  &\geq
  \frac{1}{\max\Bigl\{\frac3{2(1-\eta_0)}\frac{\omega_k}{M_{k+1}}\,,\,
  \frac{3\mu^+_k}{M_{k+1}}\Bigr\}}
  \\
  &\geq
  \frac{1}{\max\Bigl\{\frac3{2(1-\eta_0)^2}\,,\,
  3\Bigr\}}
  \geq\frac13.
 \end{split} 
 \]
 Since $r_0\leq(\frac13)^{\frac12(1-\frac1m)}$, we have
 $Q_{k+1}\subset Q_k$. In either case, we have $Q_{k+1}\subset Q_k$ and
 hence
 \[
 \osc_{Q_{k+1}}u
 \leq \osc_{Q_k}u
 \leq \frac34\omega_k\leq\omega_{k+1.}
 \]
 Second we consider the case 
 $\frac34\omega_k\leq\osc_{Q_k}u\leq\omega_k$. Since
 $\omega_k\leq\frac43\mu^+_k$, we obtain
 \[
 \mu^+_{k-1}
 \leq\max\biggl\{\frac3{2(1-\eta_0)}\omega_k\,,\,
 3\mu^+_k\biggr\}
 \leq \max\biggl\{\frac2{(1-\eta_0)}\mu^+_k\,,\,
 3\mu^+_k\biggr\}
 \leq3\mu^+_k
 \]
 and hence
 \[
  M_k\leq\max\biggl\{\frac43\mu^+_k\,,\,3\mu^+_k\biggr\}
 \leq3\mu^+_k.
 \]
 Hence we may apply Lemma \ref{lem:1} and we obtain
 \[
  \osc_{Q^{\theta_0}_{\frac{\rho_k}{2},M_k}(t_0,x_0)}u\leq(1-\eta_0)\omega_k
 =\omega_{k+1}.
 \]
 Since $r_0\leq \frac12\left(\frac13\right)^{\frac12(1-\frac1m)}
 \left(\frac{\theta_0}{2}\right)^\frac12$ and
 $\frac{M_{k+1}}{M_k}
 \geq\frac{\mu^+_k}{3\mu^+_k}=\frac13$, 
 we have $Q_{k+1}\subset Q^{\theta_0}_{\frac{\rho_k}{2},M_k}(t_0,x_0)$ and
 hence 
 \[
 \osc_{Q_{k+1}}u
 \leq\osc_{Q^{\theta_0}_{\frac{\rho_k}{2},M_k}(t_0,x_0)}u
 \leq\omega_{k+1}.  
 \]
\end{proof}

 \begin{proof}%
  [Proof of Theorem \ref{thm:1}] 
  Remarking that $M_j\geq M_{j+1}$ for $j\in\N$, we have by Lemma
  \ref{lem:2}
  \[
  \osc_{Q_{\rho_j,M_0}(t_0,x_0)}u\leq\osc_{Q_j}u\leq\omega_{j}.
  \]
  We choose $0<\sigma<1$ satisfying $r_0^\sigma\geq 1-\eta_0$. Then
 we obtain
 \[
 \osc_{Q_{\rho_j,M_0}(t_0,x_0)}u
 \leq (1-\eta_0)^j\omega_0
 =\omega_0\biggl(\frac{\rho_j}{\rho_0}\biggr)^\sigma.
 \]
 For $\rho\leq\rho_0$, there exists $k\in\N_0$ such that
 $\rho_k\leq\rho\leq\rho_{k-1}$ and hence
 \[
  \osc_{Q_{\rho,M_0}(t_0,x_0)}u
 \leq \omega_0\biggl(\frac{\rho_{k-1}}{\rho_0}\biggr)^\sigma
 = \omega_0r_0^{-\sigma}\biggl(\frac{\rho_{k}}{\rho_0}\biggr)^\sigma
 \leq M_0r_0^{-\sigma}\biggl(\frac{\rho}{\rho_0}\biggr)^\sigma.
 \]
 Taking $\rho_0>0$ as 
  \[
  \rho_0^{\sigma_0}
  =\delta_0\omega_0M^{-\frac1q(1-\frac1m)}_0
  (\|{f}\|^2_{L^q(L^p_{\mathrm{w}})(Q)} +\omega_0
  \|g\|_{L^\frac{q}{2}(L^\frac{p}{2}_{\mathrm{w}})(Q)})^{-\frac12},
  \]
 we find
  \begin{equation}
   \osc_{Q_{\rho,M_0}(t_0,x_0)}u 
   \leq CM_0^{1-\frac{\sigma}{\sigma_0}}
    M_0^{\frac{\sigma}{q\sigma_0}(1-\frac1m)} 
    (\|{f}\|^2_{L^q(L^p_{\mathrm{w}})(Q)}
    +\omega_0
    \|g\|_{L^\frac{q}{2}(L^\frac{p}{2}_{\mathrm{w}})(Q)})^\frac{\sigma}{2\sigma_0}\rho^\sigma
  \end{equation} 
 for $\rho\leq\rho_0$ where the constant $C$ depends only on
 $n,m,p$ and $q$. Furthermore, if $\rho>\rho_0$, then
  \begin{equation*}
   \osc_{Q_{\rho,M_0}(t_0,x_0)}u
    \leq M_0\biggl(\frac{\rho}{\rho_0}\biggr)^\sigma 
   \leq CM_0^{1-\frac{\sigma}{\sigma_0}}
    M_0^{\frac{\sigma}{q\sigma_0}(1-\frac1m)}
    (\|{f}\|^2_{L^q(L^p_{\mathrm{w}})(Q)}
    +\omega_0
    \|g\|_{L^\frac{q}{2}(L^\frac{p}{2}_{\mathrm{w}})(Q)})^\frac{\sigma}{2\sigma_0}\rho^\sigma.
  \end{equation*}
 Therefore, we find
 \begin{equation*}
  \begin{split}
   \osc_{Q_{\rho,M_0}(t_0,x_0)}u 
   &\leq C
   (M_0+M_0^{\frac1q(1-\frac1m)}
   (\|{f}\|^2_{L^q(L^p_{\mathrm{w}})(Q)}
   +M_0
   \|g\|_{L^\frac{q}{2}(L^\frac{p}{2}_{\mathrm{w}})(Q)})^\frac12
   \rho^\sigma \\
   &\leq C
   (M_0+M_0^{\frac1q(1-\frac1m)}
   \|{f}\|_{L^q(L^p_{\mathrm{w}})(Q)}
   +M_0^{\frac12+\frac1q(1-\frac1m)}
   \|g\|^\frac12_{L^\frac{q}{2}(L^\frac{p}{2}_{\mathrm{w}})(Q)})
   \rho^\sigma \\
   &\leq C
   (M_0+M_0^{\frac1q(1-\frac1m)}
   \|{f}\|_{L^q(L^p_{\mathrm{w}})(Q)}
   +M_0^{\frac2q(1-\frac1m)}
   \|g\|_{L^\frac{q}{2}(L^\frac{p}{2}_{\mathrm{w}})(Q)})
   \rho^\sigma 
  \end{split} 
 \end{equation*} 
 and proof of Theorem \ref{thm:1} is complete.
 \end{proof}



\section{Proof of Lower bounds (\ref{item:1}) of Lemma \ref{lem:1}}
\label{sec:3}
Without loss of generality, we assume $t_0=0$ by using the parallel
translation. We omit the center of ball $x_0$. We hereafter write
$\mu^+=\sup_{Q_{\rho,M}}u,\,\ \mu^-=\inf_{Q_{\rho,M}}u$. 

In this section, we prove Lower bounds in Lemma \ref{lem:1}. More
precisely, we show the following proposition:

\begin{proposition}[First alternative]
 \label{prop:1}
 Let $\rho>0$ satisfying 
 \[
 \rho^{\sigma_0}\leq \omega
 {M^{-\frac1q(1-\frac1m)}}h(\rho,M,\omega)^{-\frac12}.
 \]
 Assume inequalities \eqref{eq:4} and \eqref{eq:5}. Then there exists
 $0<\theta_0<1$ depending only on $n,m,p,q$ such that if
 \begin{equation*}
  m_{n+1}\left(Q_{\rho,M}\cap\Bigl\{u<\mu^-+\frac\omega2\Bigr\}\right)
   \leq\theta_0m_{n+1}\bigl(Q_{\rho,M}\bigr),
 \end{equation*}
 then
 \begin{equation*}
  u(t,x)\geq\mu^-+\frac\omega4\quad\text{for}\ (t,x)\in Q_{\frac\rho2,M}.
  \end{equation*}
\end{proposition}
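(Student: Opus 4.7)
The plan is to prove Proposition \ref{prop:1} by a De Giorgi-style iteration on the negative truncations $(u - k)_-$ with level $k$ descending toward $\mu^- + \omega/4$. Concretely, set
\[
    k_j := \mu^- + \frac{\omega}{4} + \frac{\omega}{2^{j+2}}, \quad \rho_j := \frac{\rho}{2} + \frac{\rho}{2^{j+1}}, \quad Q_j := Q_{\rho_j, M}, \quad A_j := Q_j \cap \{u < k_j\},
\]
so that $k_j \downarrow \mu^- + \omega/4$ and $Q_j \downarrow Q_{\rho/2,M}$. Choose cut-off functions $\eta_j \in C^\infty(Q_j)$ equal to $1$ on $Q_{j+1}$ and vanishing near the parabolic boundary of $Q_j$, with $|\nabla \eta_j| \lesssim 2^j/\rho$ and $|\partial_t \eta_j^2| \lesssim 4^j M^{1-1/m}/\rho^2$ (the time scaling dictated by the modified cylinder). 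The target is a nonlinear recursion $Y_{j+1} \leq C b^j Y_j^{1+\delta}$ for $Y_j := m_{n+1}(A_j)/m_{n+1}(Q_j)$, where $\delta > 0$ depends only on $n$. By the standard fast-geometric-convergence lemma (which I would state in the appendix), choosing $\theta_0$ so that $Y_0 \leq \theta_0$ forces $Y_j \to 0$, yielding $u \geq \mu^- + \omega/4$ almost everywhere on $Q_{\rho/2,M}$.

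Second, I would derive a Caccioppoli-type energy inequality. Testing the weak form of \eqref{eq:3} against $-(u - k_j)_- \eta_j^2$ (justified via Steklov averaging) and integrating by parts in time against the primitive
\[
    \Phi_k(u) := \int_u^k (k - s) \, \frac{s^{1/m - 1}}{m} \, ds \geq c_m \, k^{1/m - 1} (u - k)_-^2
\]
yields, after dropping the positive trace at the initial time,
\[
    \esssup_{t \in I_{\rho_j, M}} \int_{B_{\rho_j}} \Phi_{k_j}(u) \eta_j^2 \, dx + \iint_{Q_j} |\nabla (u - k_j)_-|^2 \eta_j^2 \, d\tau dx \leq \mathcal{E}_j + \mathcal{F}_j,
\]
where $\mathcal{E}_j$ collects the cut-off error $\iint_{Q_j} (u - k_j)_-^2 \bigl(|\nabla \eta_j|^2 + |\partial_t \eta_j^2|\bigr)$ and $\mathcal{F}_j$ the forcing $\iint f \cdot \nabla[(u - k_j)_- \eta_j^2] + \iint g (u - k_j)_- \eta_j^2$. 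Note that the weight $k_j^{1/m - 1}$ is comparable to $M^{1/m - 1}$, because $\omega \approx M$ by \eqref{eq:4}--\eqref{eq:5}, and this is the very weight that makes the intrinsic time scale $\rho^2/M^{1-1/m}$ compatible with the gradient term.

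Third, $\mathcal{F}_j$ must be controlled via the weak--$L^p$ hypothesis. For any measurable $E \subset B_{\rho_j}$ one has $\int_E |f|^2 \, dx \lesssim \|f\|_{L^p_\mathrm{w}(B_{\rho_j})}^2 |E|^{1 - 2/p}$ and $\int_E |g| \, dx \leq \|g\|_{L^{p/2}_\mathrm{w}(B_{\rho_j})} |E|^{1 - 2/p}$; a further Hölder in time introduces a factor $|A_j|^{\sigma_0}$, with $\sigma_0 = 1 - 2/q - n/p$. The hypothesis $\rho^{\sigma_0} \leq \delta_0 \omega M^{-(1-1/m)/q} h(\rho, M, \omega)^{-1/2}$ is calibrated so that, after Cauchy-Schwarz absorption of the gradient piece, $\mathcal{F}_j$ is bounded by a small multiple of $\omega^2 |A_j|$, i.e.\ by the same scale as $\mathcal{E}_j$.

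Finally, I would combine the $L^\infty_t L^2_x$ bound from the $\Phi_{k_j}$ term with the $L^2_{t,x}$ bound on $\nabla(u-k_j)_-\eta_j$ via the parabolic Sobolev (Gagliardo-Nirenberg) embedding on $Q_{j+1}$, obtaining $\iint_{Q_{j+1}} (u-k_j)_-^{2(1+2/n)} \eta_{j+1}^{2(1+2/n)} \lesssim M^{1-1/m}\omega^{2(1+2/n)} |A_j|^{1 + 2/(n+2)}$. Using $(u - k_j)_- \geq \omega/2^{j+3}$ on $A_{j+1}$ converts this into the required recursion on $Y_j$ with $\delta = 2/(n+2)$. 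The principal technical obstacle is the bookkeeping: tracking the intrinsic factor $M^{1-1/m}$ through the energy weight $k_j^{1/m-1}$, the parabolic embedding, and the weak--$L^p$ Hölder estimates so that the absorption condition $\rho^{\sigma_0} \leq \delta_0 \omega M^{-(1-1/m)/q} h^{-1/2}$ indeed yields a forcing contribution of order $\omega^2 |A_j|$, rather than something that competes with the main terms. Once this calibration is done, $\theta_0$ is fixed a posteriori by the De Giorgi lemma and the conclusion follows.
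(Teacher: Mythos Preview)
Your overall strategy---De~Giorgi iteration on $(u-k_j)_-$ via a Caccioppoli estimate and parabolic Sobolev embedding---is exactly the paper's approach, with essentially the same choice of levels and radii. But two steps in your outline are oversimplified precisely where the degeneracy bites.

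First, the claim that $\omega\approx M$ by \eqref{eq:4}--\eqref{eq:5} is false: those hypotheses compare $M$ to $\mu^+$ and $\omega$ to the oscillation, but impose no relation between $\omega$ and $M$. The lower bound $\Phi_{k_j}(u)\ge c\,k_j^{1/m-1}(u-k_j)_-^2$ is fine (and $k_j^{1/m-1}\ge M^{1/m-1}$ simply because $k_j\le\mu^+\le M$), but the \emph{upper} bound needed for the $\partial_t\eta^2$ term is delicate: the natural estimate gives $u^{1/m-1}$, which blows up near $\{u=0\}$. The paper (Lemma~\ref{lem:3}, treatment of $I_1$) resolves this by a dichotomy on whether $\mu^-\le\tfrac12\mu^+$, proving $(\mu^+)^{1-1/m}\bigl[(\mu^-+\tfrac\omega2)^{1/m}-(\mu^-)^{1/m}\bigr]\le C(m)\,\omega$; the resulting time term is $\omega\iint(u-k)_-\,\partial_t\eta^2$ rather than the quadratic form you wrote.

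Second, the forcing does not collapse to $\omega^2|A_j|$. After the weak-$L^p$ spatial bound and H\"older in time you obtain
\[
\iint_{A_j}|f|^2\;\le\;\|f\|_{L^q(L^p_{\mathrm w})}^2
\Bigl(\int_{I_{\rho_j,M}}m_n\bigl(B_{\rho_j}\cap\{u(t)<k_j\}\bigr)^{q'(\frac12-\frac1p)}\,dt\Bigr)^{2/q'},
\]
and the bracketed quantity is not a power of $|A_j|$. The paper introduces a second sequence $Z_i$ measuring exactly this time-integrated slice quantity, derives a \emph{coupled} recursion for $(Y_i,Z_i)$ via the Lady\v{z}enskaja inequality (Proposition~\ref{prop:4}), and closes it with the two-sequence fast-convergence Lemma~\ref{lem:9}. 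One can in principle reduce to a single sequence by an additional H\"older step in time, but this requires a case split $q\gtrless p$ (cf.\ \eqref{eq:42}--\eqref{eq:43}, which the paper uses only to bound $Z_0$ by a power of $Y_0$); your asserted factor ``$|A_j|^{\sigma_0}$'' is not what emerges, and the single-sequence recursion you wrote does not close as stated.
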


To show the lower bounds, the following Caccioppoli estimate plays an
important role.

\begin{lemma}[the Caccioppoli estimate for sub-level sets]
 \label{lem:3}
 Let $\eta=\eta(t,x)$ be a cut-off function in $Q_{\rho,M}$. For
 $\mu^-<k<\mu^-+\frac12\omega$, there exists a constant $C>0$ depending
 only on $m$ such that
 \begin{multline}
  \label{eq:11}
  \sup_{t\in I_{\rho,M}}
  \int_{B_\rho}(u(t)-k)^2_-\eta^2\,dx
  +(\mu^+)^{1-\frac1m}\iint_{Q_{\rho,M}}|\nabla(u-k)_-|^2\eta^2\,dtdx\\
  \leq C\biggl\{
  \omega\iint_{Q_{\rho,M}}(u-k)_-\eta\partial_t\eta\,dtdx
  +(\mu^+)^{1-\frac1m}\iint_{Q_{\rho,M}}(u-k)_-^2|\nabla\eta|^2\,dtdx \\
  +(\mu^+)^{1-\frac1m}h(\rho,M,\omega)\biggl(\int_{I_{\rho,M}}
  m_n\Bigl(B_\rho\cap\{u(t)<k\}\Bigr)^{q'(\frac12-\frac1p)}\,dt\biggr)^{\frac2{q'}}
  \biggr\},
 \end{multline}
 where $\frac12=\frac1q+\frac1{q'}$.
\end{lemma}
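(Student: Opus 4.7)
Plan. I test the weak form of $\partial_t u^{1/m} - \Delta u = -\Div f + g$ against $\varphi = -(u-k)_-\eta^2$ and then multiply the resulting energy identity by $(\mu^+)^{1-1/m}$ to obtain the stated form. The subtle point is that because $\partial_t$ acts on $u^{1/m}$ rather than $u$, the time-derivative piece does not yield $(u-k)_-^2$ directly; instead, the natural energy density is $G(u) := \int_u^k (k-s)s^{1/m-1}/m\,ds$ (vanishing for $u \geq k$), and two-sided estimates on $G$ in terms of $(u-k)_-$, $\omega$, and $\mu^+$ are what produce the explicit weights.

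First, the chain rule $\partial_t G(u) = -(u-k)_-\partial_t u^{1/m}$ together with $\eta = 0$ at the bottom of $I_{\rho,M}$ gives, upon integration by parts in time, $-\iint \partial_t u^{1/m}\,(u-k)_-\eta^2 = \int_{B_\rho}G(u(t))\eta^2\,dx - 2\iint G(u)\eta\partial_t\eta$. The Laplacian contributes $\iint|\nabla(u-k)_-|^2\eta^2$ plus a cross term $2\iint\nabla(u-k)_-(u-k)_-\eta\nabla\eta$, which I absorb into the gradient piece by Young's inequality at the price of $C\iint(u-k)_-^2|\nabla\eta|^2$. The forcing terms $\iint f\cdot\nabla((u-k)_-\eta^2)$ and $\iint g(u-k)_-\eta^2$ are split by Young's the same way, leaving residuals $\iint|f|^2\eta^2\chi_{\{u<k\}}$ and $\iint|g|(u-k)_-\eta^2\chi_{\{u<k\}}$; both are then estimated using the weak-$L^p$ bound $\int_K|h| \leq |K|^{1-1/p}\|h\|_{L^p_{\mathrm{w}}}$ (together with the distribution-function computation $\int_K|f|^2 \leq C_p\|f\|^2_{L^p_{\mathrm{w}}}|K|^{1-2/p}$ and its $L^{p/2}_{\mathrm{w}}$ analogue for $g$) followed by H\"older in time, producing the factor $h(\rho,M,\omega) \cdot \bigl(\int_{I_{\rho,M}}|A_t|^{q'(1/2-1/p)}\,dt\bigr)^{2/q'}$ with $A_t := B_\rho\cap\{u(t)<k\}$.

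Closing the proof requires two-sided control of $G$. For the lower bound, $s^{1/m-1}\geq k^{1/m-1}\geq(\mu^+)^{1/m-1}$ on $[u,k]$ yields $G(u) \geq \frac{(\mu^+)^{1/m-1}}{2m}(u-k)_-^2$, which after multiplying by $(\mu^+)^{1-1/m}$ supplies the $\sup_t\int(u-k)_-^2\eta^2$ piece on the LHS. For the upper bound I split cases: if $\omega \geq \mu^+/2$, then subadditivity $(a+b)^{1/m} \leq a^{1/m}+b^{1/m}$ together with $k-\mu^- \leq \omega/2$ gives $k^{1/m}-u^{1/m}\leq (\omega/2)^{1/m}$, hence $G(u) \leq (u-k)_-(\omega/2)^{1/m}$; if $\omega < \mu^+/2$, then $\osc \leq \tfrac43\omega < \tfrac23\mu^+$ (from \eqref{eq:5}) forces $u\geq\mu^-\geq\mu^+/3$ throughout $\{u<k\}$, giving $G(u) \leq \frac{u^{1/m-1}}{2m}(k-u)^2 \leq C(\mu^+)^{1/m-1}(u-k)_-^2$. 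In either case, multiplying by $(\mu^+)^{1-1/m}$ and using $(u-k)_-\leq\omega/2$ produces $(\mu^+)^{1-1/m}G(u)\leq C\omega(u-k)_-$, exactly matching the first RHS term. The main obstacle is precisely this two-sided control of $G$: extracting the $\omega$ factor (rather than a larger power of $\mu^+$) in front of $\iint(u-k)_-\eta\partial_t\eta$ requires the case split between $\omega\gtrsim\mu^+$ and $\omega\ll\mu^+$, exploiting a different comparison between $u$, $k$, $\mu^+$, and $\omega$ in each regime.
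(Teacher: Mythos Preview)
Your proposal is correct and follows essentially the same route as the paper: test with $-(u-k)_-\eta^2$, identify the time-derivative term as $\partial_t G(u)$ with $G(u)=\frac{1}{m}\int_0^{(u-k)_-}(k-\xi)^{1/m-1}\xi\,d\xi$, use $(k-\xi)^{1/m-1}\ge(\mu^+)^{1/m-1}$ for the lower bound, multiply through by $(\mu^+)^{1-1/m}$, and control the forcing by the weak-$L^p$/H\"older argument. The only cosmetic differences are that the paper splits the upper bound on $G$ according to whether $\mu^-\le\tfrac12\mu^+$ (rather than $\omega\ge\tfrac12\mu^+$), and in the large-oscillation case it bounds $k^{1/m}-u^{1/m}\le(\mu^-+\tfrac\omega2)^{1/m}-(\mu^-)^{1/m}$ directly rather than invoking subadditivity of $x\mapsto x^{1/m}$; in the small-oscillation case it uses the mean-value integral $\int_0^1\frac{d}{ds}(\mu^-+\tfrac\omega2 s)^{1/m}\,ds$ to reach the same $C(m)\omega$ bound. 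One minor slip: in your Case~2 you write ``$\osc\le\tfrac43\omega$ from \eqref{eq:5}'', but \eqref{eq:5} already gives the sharper $\osc\le\omega$, which with $\omega<\mu^+/2$ yields $\mu^->\mu^+/2$ (stronger than the $\mu^+/3$ you state); your conclusion is unaffected.
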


\begin{proof}%
 Testing a function $-(u-k)_-\eta^2$ in \eqref{eq:3}, we obtain
\begin{multline*}
 \frac1m\iint_{Q_{\rho,M}}
 \partial_t\biggl(\int_0^{(u-k)_-}(k-\xi)^{\frac1m-1}\xi\,d\xi\biggr)\eta^2\,dtdx \\
 +\iint_{Q_{\rho,M}}\nabla(u-k)_-\cdot\nabla\{(u-k)_-\eta^2\}\,dtdx \\
 =-\iint_{Q_{\rho,M}}{f}\cdot\nabla\{(u-k)_-\eta^2\}\,dtdx
 -\iint_{Q_{\rho,M}}g(u-k)_-\eta^2\,dtdx.
\end{multline*}
 By the integration by parts and the Young inequality, we obtain
 \begin{multline}
  \label{eq:12}
  \frac1m\sup_{t\in I_{\rho,M}}\int_{B_\rho}
  \biggl(\int_0^{(u(t)-k)_-}(k-\xi)^{\frac1m-1}\xi\,d\xi\biggr)
  \eta^2(t)\,dx 
  +\frac14\iint_{Q_{\rho,M}}|\nabla(u-k)_-|^2\eta^2\,dtdx \\
  \leq\frac1m\iint_{Q_{\rho,M}}
  \biggl(\int_0^{(u-k)_-}(k-\xi)^{\frac1m-1}\xi\,d\xi\biggr)
  \partial_t\eta^2\,dtdx \\
  +3\iint_{Q_{\rho,M}}(u-k)_-^2|\nabla\eta|^2\,dtdx \\
  +2\iint_{Q_{\rho,M}\cap\{u<k\}}|{f}|^2\eta^2\,dtdx
  +\iint_{Q_{\rho,M}\cap\{u<k\}}|g|(u-k)_-\eta^2\,dtdx.
 \end{multline}
 We estimate the 1st term of the left-hand side of
 \eqref{eq:12}. Since \eqref{eq:5} and 
 $k\leq\mu^-+\frac\omega2 \leq\mu^+-\osc_{Q_{\rho,M}}u+\frac\omega2
 \leq\mu^+$, we have
 \[
 (k-\xi)^{\frac1m-1}
 \geq k^{\frac1m-1}
 \geq (\mu^+)^{\frac1m-1}\quad\text{for}\ \,\xi\geq0
 \]
 and hence
 \begin{equation}
  \label{eq:13}
   \begin{split}
 &\quad
 \frac1{2m}\sup_{t\in I_{\rho,M}}\int_{B_\rho}
 (u(t)-k)_-^2\eta^2(t)\,dx 
    +\frac14(\mu^+)^{1-\frac1m}\iint_{Q_{\rho,M}}|\nabla(u-k)_-|^2\eta^2
 \,dtdx \\
  &\leq\frac1m(\mu^+)^{1-\frac1m}\iint_{Q_{\rho,M}}
  \biggl(\int_0^{(u-k)_-}(k-\xi)^{\frac1m-1}\xi\,d\xi\biggr)
  \partial_t\eta^2\,dtdx \\
 &\quad+3(\mu^+)^{1-\frac1m}\iint_{Q_{\rho,M}}(u-k)_-^2|\nabla\eta|^2\,dtdx 
    +2(\mu^+)^{1-\frac1m}\iint_{Q_{\rho,M}\cap\{u<k\}}|{f}|^2\eta^2\,dtdx \\
 &\quad+(\mu^+)^{1-\frac1m}\iint_{Q_{\rho,M}\cap\{u<k\}}|g|(u-k)_-\eta^2\,dtdx \\
  &=:I_1+I_2+I_3+I_4.
   \end{split} 
\end{equation}
 
We estimate $I_3$ and $I_4$. By the definition of the weak $L^p$ space
and by the H\"older inequality, we have
 \[
 \begin{split}
  \iint_{Q_{\rho,M}\cap\{u<k\}}|{f}|^2\eta^2\,dtdx 
  &=\int_{I_{\rho,M}}\,dt
  \int_{B_\rho\cap\{u(t)<k\}}|{f}|^2\,dx \\
  &\leq\int_{I_{\rho,M}}
  \big\||{f}(t)|^2\big\|_{L^{\frac{p}{2}}_{\mathrm{w}}(B_\rho)}
  m_n\Bigl(B_\rho\cap\{u(t)<k\}\Bigr)^{1-\frac2p}\,dt \\
  &\leq \big\||{f}|^2\big\|_{L^{\frac{q}{2}}(L^{\frac{p}{2}}_{\mathrm{w}})(Q_{\rho,M})}
  \biggl(\int_{I_{\rho,M}}
  m_n\Bigl(B_\rho\cap\{u(t)<k\}\Bigr)^{q'(\frac12-\frac1p)}\,dt\biggr)^\frac2{q'},
 \end{split}
 \]
 and
 \[
 \begin{split}
  \iint_{Q_{\rho,M}\cap\{u<k\}}|g|(u-k)_-\eta^2\,dtdx 
  &=\frac\omega2\int_{I_{\rho,M}}\,dt
  \int_{B_\rho\cap\{u(t)<k\}}|g|\,dx \\
  &\leq\frac\omega2\int_{I_{\rho,M}}
  \|g(t)\|_{L^{\frac{p}{2}}_{\mathrm{w}}(B_\rho)}
  m_n\Bigl(B_\rho\cap\{u(t)<k\}\Bigr)^{1-\frac2p}\,dt \\
  &\leq \frac\omega2\|g\|_{L^{\frac{q}{2}}
  (L^{\frac{p}{2}}_{\mathrm{w}})(Q_{\rho,M})}
  \biggl(\int_{I_{\rho,M}}
  m_n\Bigl(B_\rho\cap\{u(t)<k\}\Bigr)^{q'(\frac12-\frac1p)}\,dt
  \biggr)^\frac2{q'}.
 \end{split}
 \]
 Therefore
 \begin{equation}
  \label{eq:14}
   I_3+I_4 
  \leq 2(\mu^+)^{1-\frac1m}
   h(\rho,M,\omega)
  \biggl(\int_{I_{\rho,M}}
  m_n\Bigl(B_\rho\cap\{u(t)<k\}\Bigr)^{q'(\frac12-\frac1p)}\,dt
  \biggr)^\frac2{q'}.
 \end{equation}  
 We estimate $I_1$. Since
 \[
 \begin{split}
  \int_0^{(u-k)_-}(k-\xi)^{\frac1m-1}\xi\,d\xi
  &\leq-m(u-k)_-\int_0^{(u-k)_-}\frac{\partial}{\partial\xi}(k-\xi)^\frac1m\,d\xi \\
  &=m(u-k)_-[k^\frac1m-(k-(u-k)_-)^\frac1m],
 \end{split} 
 \]
 we have
 \[
 \begin{split}
   I_1
  &\leq(\mu^+)^{1-\frac1m}
  \iint_{Q_{\rho,M}}[k^\frac1m-(k-(u-k)_-)^\frac1m](u-k)_
 -\partial_t\eta^2\,dtdx \\
  &\leq(\mu^+)^{1-\frac1m}
  \iint_{Q_{\rho,M}}\biggl[\biggl(\mu^-+\frac\omega2\biggr)^\frac1m-(\mu^-)^\frac1m\biggr](u-k)_
  -\partial_t\eta^2\,dtdx. 
 \end{split}
 \]
 Either if $\mu^-\leq\frac12\mu^+$, then
  $\mu^+\leq\omega+\mu^-$ and hence $\mu^+\leq2\omega$. Therefore
 \[
 (\mu^+)^{1-\frac1m}
 \biggl[\biggl(\mu^-+\frac\omega2\biggr)^\frac1m-(\mu^-)^\frac1m\biggr]
 \leq (2\omega)^{1-\frac1m}
 \biggl(\frac\omega2\biggr)^\frac1m
 \leq 2^{1-\frac2m}\omega 
 \]
 and hence
 \[
  I_1\leq C(m)\omega\iint_{Q_{\rho,M}}(u-k)_-\partial_t\eta^2\,dtdx.
 \]
 Otherwise, if $\mu^->\frac12\mu^+$, then
 \[
 \begin{split}
  \Bigl(\mu^-+\frac\omega2\Bigr)^\frac1m-(\mu^-)^\frac1m
  &=\int_0^1\frac{d}{ds}\Bigl(\mu^-+\frac{\omega}{2}s\Bigr)^\frac1m\,ds \\
  &=\frac\omega{2m}\int_0^1 \Bigl(\mu^-+\frac{\omega}{2}s\Bigr)^{\frac1m-1}\,ds 
  \leq\frac\omega{2m}(\mu^-)^{\frac1m-1}
  \leq\frac\omega{2m}\biggl(\frac12\mu^+\biggr)^{\frac1m-1},
 \end{split} 
 \]
 and hence
 \[
 (\mu^+)^{1-\frac1m}
 \biggl[\biggl(\mu^-+\frac\omega2\biggr)^\frac1m-(\mu^-)^\frac1m\biggr]
 \leq \frac\omega{2m}(\mu^+)^{1-\frac1m}\biggl(\frac12\mu^+\biggr)^{\frac1m-1}
 \leq C(m)\omega. 
 \]
 In either case, we obtain
 \begin{equation}
  \label{eq:15}
   I_1\leq C(m)\omega\iint_{Q_{\rho,M}}(u-k)_-\partial_t\eta^2\,dtdx.
 \end{equation}
 Substituting \eqref{eq:14} and \eqref{eq:15} for \eqref{eq:13} we obtain
 \eqref{eq:11}.
\end{proof}

\begin{proof}%
 [Proof of Proposition \ref{prop:1}]
 We consider the scale transform
 \begin{equation*}
  \begin{aligned}
   s&=M^{1-\frac1m}t,&\ \,  
   \tilde{u}(s,x)&=u(t,x),&\ \,
   \tilde{\eta}(s,x)&=\eta(t,x),\\
   \tilde{{f}}(s,x)&={f}(t,x),&\ \,
   g(s,x)&=g(t,x).&& 
  \end{aligned} 
\end{equation*}
 and we put
 $\tilde{h}(\rho,\omega):=\big\|\tilde{f}\big\|^2_{L^q(L^p_\mathrm{w})(Q_{\rho})}
 +\omega\|\tilde{g}\|_{L^\frac{q}2(L^{\frac{p}2}_\mathrm{w})(Q_{\rho})}$. 
 We rewrite the Caccioppoli estimate \eqref{eq:11} as follows:
 \begin{multline}
  \label{eq:16}
  \sup_{s\in I_\rho}\int_{B_\rho}(\tilde{u}(s)-k)_-^2\tilde{\eta}^2(s)\,dx
  +\frac{(\mu^+)^{1-\frac1m}}{M^{1-\frac1m}}
  \iint_{Q_\rho}|\nabla(\tilde{u}-k)_-|^2\tilde{\eta}^2\,dsdx \\
  \leq C(m)\Biggl\{
  \omega\iint_{Q_\rho}(\tilde{u}-k)_-\partial_s\tilde{\eta}^2\,dsdx
  +\frac{(\mu^+)^{1-\frac1m}}{M^{1-\frac1m}}
  \iint_{Q_\rho}(\tilde{u}-k)_-^2|\nabla\tilde{\eta}|^2\,dsdx \\
  +\frac{(\mu^+)^{1-\frac1m}}{M^{1-\frac1m}}\tilde{h}(\rho,\omega)
  \biggl(\int_{I_{\rho}}m_n\bigl(B_\rho\cap\{\tilde{u}(s)<k\}
  \bigr)^{q'(\frac12-\frac1p)}\,ds
  \biggr)^{\frac2{q'}}
  \Biggr\}.
 \end{multline}
 We take $p_*,q_*>0$ as
 \[
  \frac2{q'}=\frac2{q_*}\biggl(1+\frac{2\sigma_0}{n}\biggr),\quad
 q'\biggl(\frac12-\frac1p\biggr)=\frac{q_*}{p_*}.
 \]
 We remark that $\frac2{q_*}+\frac{n}{p_*}=\frac{n}2$. 
 For $i\in\N$, we
 take $\rho=\rho_i,\,k=k_i,\,\tilde{\eta}=\tilde{\eta_i}$ satisfying
 $\tilde{\eta_i}\equiv1$ on $Q_{\rho_{i+1}}$ and
 \[
 \begin{aligned}
  k_i&=\mu^-+\frac14\omega+\frac{1}{2^{i+1}}\omega,\qquad
  \rho_i=\frac12\rho+\frac1{2^{i+1}}\rho,  \\
  Y_i&:=\frac{m_{n+1}\bigl(Q_{\rho_i}\cap\{\tilde{u}<k_i\}\bigr)}%
  {m_{n+1}\bigl(Q_\rho\bigr)}, \\
  Z_i&=\frac{\rho^2}{m_{n+1}\bigl(Q_\rho\bigr)}
  \biggl(
  \int_{I_{\rho_i}}
  m_n\Bigl(B_{\rho_i}\cap\{\tilde{u}(s)<k_i\}\Bigr)^\frac{q_*}{p_*}\,ds
  \biggr)^\frac2{q_*}, \\
  |\nabla\tilde{\eta_i}|&\leq\frac{2}{\rho_i-\rho_{i+1}}
  \leq\frac{8\cdot2^i}{\rho},\qquad
  \partial_s\tilde{\eta_i}\leq\frac{2}{\rho_i^2-\rho_{i+1}^2}\leq\frac{16\cdot2^{2i}}{3\rho^2}.
 \end{aligned}
 \]
 Then, by using \eqref{eq:4} and
 $(\tilde{u}-k_i)_-\leq\frac\omega2$, we rewrite \eqref{eq:16}
 as
 \begin{equation*}
  \begin{split}
   &\quad\|(\tilde{u}-k_i)_-\tilde{\eta_i} 
   \|^2_{L^\infty(L^2)\cap L^2(H^1)(Q_{\rho_i})} \\
   &\leq C(m)\Biggl\{
   \omega\iint_{Q_{\rho_i}}(\tilde{u}-k_i)_-\partial_s\tilde{\eta_i}^2\,dsdx
   +\iint_{Q_{\rho_i}}(\tilde{u}-k_i)_-^2|\nabla\tilde{\eta_i}|^2\,dsdx \\
   &\quad+\tilde{h}(\rho,\omega)\biggl(\int_{I_{\rho_i}}
   m_n\Bigl(B_{\rho_i}\cap\{\tilde{u}(s)<k_i\}
   \Bigr)^\frac{q_*}{p_*}\,ds\biggr)^{\frac2{q_*}(1+\frac{2\sigma_0}{n})}
   \Biggr\} \\
   &\leq C(m)\Biggl\{
  \frac{2^{2i}\omega^2}{\rho^2}m_{n+1}\Bigl(Q_{\rho_i}\cap\{u<k_i\}\Bigr) \\
   &\quad +\tilde{h}(\rho,\omega)
   \biggl(\int_{I_{\rho_i}}m_{n+1}\Bigl(B_{\rho_i}\cap\{\tilde{u}(s)<k_i\}
   \Bigr)^\frac{q_*}{p_*}\,ds\biggr)^{\frac2{q_*}(1+\frac{2\sigma_0}{n})}
   \Biggr\} \\
   &\leq C(m)\frac{\omega^2m_{n+1}\bigl(Q_\rho\bigr)}{\rho^2}\Biggl\{
   2^{2i}Y_i
   +\tilde{h}(\rho,\omega)\omega^{-2}\biggl(\frac{m_{n+1}\bigl(Q_\rho\bigr)}{\rho^2}\biggr)^\frac{2\sigma_0}{n}
   Z_i^{1+\frac{2\sigma_0}{n}}
   \Biggr\}. 
  \end{split} 
\end{equation*}
 Using the Lady\v{z}enskaja inequality (cf. Proposition \ref{prop:4}) and the
 H\"older inequality, we have
 \[
 \begin{split}
  \|(\tilde{u}-k_i)_-\tilde{\eta_i}\|^2_{L^2(Q_{\rho_i})}
  &\leq \|(\tilde{u}-k_i)_-\tilde{\eta_i}\|^2_{L^{2+\frac4n}(Q_{\rho_i})}
  \|\chi_{\{\tilde{u}<k_i\}}\|^2_{L^{n+2}(Q_{\rho_i})} \\
  &\leq C({m,n})\omega^2m_{n+1}\bigl(Q_\rho\bigr)Y_i^{\frac2{n+2}} \\
  &\quad\times\Biggl\{
  2^{2i}Y_i
  +\tilde{h}(\rho,\omega)\omega^{-2}
  \biggl(\frac{m_{n+1}\bigl(Q_\rho\bigr)}{\rho^2}\biggr)^\frac{2\sigma_0}{n}
  Z_i^{1+\frac{2\sigma_0}{n}}
  \Biggr\}
 \end{split}
 \]
 and
\begin{equation*}
   \|(\tilde{u}-k_i)_-\tilde{\eta_i}\|^2_{L^{q_*}(L^{p_*})(Q_{\rho_i})}
  \leq C({m,n})\frac{\omega^2m_{n+1}\bigl(Q_\rho\bigr)}{\rho^2} 
  \Biggl\{
  2^{2i}Y_i
  +\tilde{h}(\rho,\omega)\omega^{-2}
  \biggl(\frac{m_{n+1}\bigl(Q_\rho\bigr)}{\rho^2}\biggr)^\frac{2\sigma_0}{n}
  Z_i^{1+\frac{2\sigma_0}{n}}
  \Biggr\}.
\end{equation*} 
 Since
 \[
 \begin{split}
  \|(\tilde{u}-k_i)_-\tilde{\eta_i}\|^2_{L^2(Q_{\rho_i})}
  &\geq\|(\tilde{u}-k_i)_-\|^2_{L^2(Q_{\rho_{i+1}}\cap\{\tilde{u}<k_{i+1}\})} \\
  &\geq (k_i-k_{i+1})_-^2
  m_{n+1}\Bigl(Q_{\rho_{i+1}}\cap\{\tilde{u}<k_{i+1}\}\Bigr) \\
  &=\frac{\omega^2}{64\cdot2^{2i}}m_{n+1}\bigl(Q_\rho\bigr)Y_{i+1}
 \end{split} 
 \]
and
 \[
 \begin{split}
  \|(\tilde{u}-k_i)_-\tilde{\eta_i}\|^2_{L^{q_*}(L^{p_*})(Q_{\rho_i})} 
  &\geq\|(\tilde{u}-k_i)_-
  \|^2_{L^{q_*}(L^{p_*})(Q_{\rho_{i+1}}\cap\{\tilde{u}<k_{i+1}\})} \\
  &\geq (k_i-k_{i+1})_-^2
  \biggl(\int_{I_{\rho_{i+1}}}
  m_n\Bigl(B_{\rho_{i+1}}\cap\{\tilde{u}(s)<k_{i+1}\}
  \Bigr)^\frac{q_*}{p_*}\,ds\biggr)^{\frac2{q_*}} \\
  &=\frac{\omega^2}{64\cdot2^{2i}}
  \frac{m_{n+1}\bigl(Q_\rho\bigr)}{\rho^2}Z_{i+1},
 \end{split} 
 \]
 we obtain
 \[
 Y_{i+1}
 \leq C({m,n})
 \Biggl\{
 2^{4i}Y_i^{1+\frac2{n+2}}
 +2^{2i}\tilde{h}(\rho,\omega)
 \omega^{-2}\biggl(
 \frac{m_{n+1}\bigl(Q_\rho\bigr)}{\rho^2}\biggr)^\frac{2\sigma_0}{n}
 Y_i^{\frac2{n+2}}Z_i^{1+\varepsilon} \Biggr\} 
 \]
 and
 \[
 Z_{i+1}
 \leq C({m,n})
 \Biggl\{
 2^{4i}Y_i
 +2^{2i}\tilde{h}(\rho,\omega)
 \omega^{-2}\biggl(
 \frac{m_{n+1}\bigl(Q_\rho\bigr)}{\rho^2}\biggr)^\frac{2\sigma_0}{n}
 Z_i^{1+\varepsilon}\Biggr\}. 
 \]
 Either if $q\geq p$, then $\frac{q_*}{p_*}\leq1$ and we obtain
 \begin{equation}
  \label{eq:42}
  \begin{split}
   Z_0&=\frac{\rho^2}{m_{n+1}\bigl(Q_\rho\bigr)}
   \Biggl(\int_{I_{\rho_0}}
   m_n\Bigl(B_{\rho_0}\cap\{\tilde{u}(s)<k_0\}\Bigr)^\frac{q_*}{p_*}
   \,ds\Biggr)^\frac2{q_*} \\
   &\leq\frac{\rho^2}{m_{n+1}\bigl(Q_\rho\bigr)}
   \Biggl(\int_{I_{\rho_0}}m_n\Bigl(B_{\rho_0}\cap\{\tilde{u}(s)<k_0\}\Bigr)
   \,ds\Biggr)^\frac2{p_*}\rho^{\frac4{q_*}(1-\frac{q_*}{p_*})} 
   \leq C({n,p,q})Y_0^\frac2{p_*},
  \end{split}
 \end{equation}
 by the H\"older inequality.
 Otherwise, if $q<p$, then
 \begin{equation}
  \label{eq:43}
  \begin{split}
   Z_0&=\frac{\rho^2}{m_{n+1}\bigl(Q_\rho\bigr)} 
   \Biggl(\int_{I_{\rho_0}}
   m_n\bigl(B_{\rho_0}\cap\{\tilde{u}(s)<k_0\}\bigr)
   m_n\Bigl(B_{\rho_0}\cap\{\tilde{u}(s)<k_0\}\Bigr)^{1-\frac{q_*}{p_*}}
   \,ds\Biggr)^\frac2{q_*} \\
   &\leq\frac{\rho^2}{m_{n+1}\bigl(Q_\rho\bigr)}
   m_n\bigl(B_{\rho_0}\bigr)^{\frac2{p_*}-\frac2{q_*}}
   \Biggl(\int_{I_{\rho_0}}
   m_n\Bigl(B_{\rho_0}\cap\{\tilde{u}(s)<k_0\}\Bigr)
   \,ds\Biggr)^\frac2{q_*} \\
   &\leq C({n,p,q})Y_0^\frac2{q_*}.
  \end{split}
 \end{equation}

 Therefore, by using $\rho^{\sigma_0}\leq\omega
 \tilde{h}(\rho,\omega)^{-\frac12}$ and Lemma \ref{lem:9},
 there exists
 $0<\theta_0=\theta_0(n,m,p,q)<1$ such that if
 $Y_0\leq \theta_0$, then
 $Y_i\rightarrow0$ as $i\rightarrow\infty$, i.e.
\[
 \tilde{u}(s,x)>\mu^-+\frac\omega4
 \quad\text{a.a.}\ (s,x)\in Q_{\frac\rho2}.
\]
\end{proof}



\section{Proof of Upper bounds (\ref{item:2}) of Lemma \ref{lem:1}}
\label{sec:4}
In this section, we prove Upper bounds in Lemma \ref{lem:1}. More
precisely we show the following proposition:

\begin{proposition}
 \label{prop:2} Let $0<\theta_0<1$. Assume inequalities \eqref{eq:4}
 and \eqref{eq:5}. Then, there exist $\eta_1,\delta_1>0$ depending only
 on $n,m,p,q$ and $\theta_0$ such that if
 \[
 \rho^{\sigma_0}\leq\delta_1\omega M^{-\frac1q(1-\frac1m)}
 h(\rho,M,\omega)^{-\frac12}  
 \]
 and
 \[
 m_{n+1}\left(Q_{\rho,M}\cap
 \Bigl\{u<\inf_{Q_{\rho,M}}u+\frac\omega2\Bigr\}\right)
 >\theta_0m_{n+1}\bigl(Q_{\rho,M}\bigr),
 \]
 then
 \[
 u(t,x)\leq\sup_{Q_{\rho,M}}u-\eta_1\omega\quad\text{for}\ 
 (t,x)\in Q^{\theta_0}_{\frac\rho2,M}.
 \]
\end{proposition}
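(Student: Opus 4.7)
The plan is to carry out the De Giorgi second-alternative argument on the modified parabolic cylinder $Q_{\rho,M}$, in three stages: extract a favorable initial time-slice from the measure hypothesis, propagate that information forward in time through a logarithmic Caccioppoli inequality, and close with a De Giorgi iteration on super-level sets patterned on the proof of Proposition \ref{prop:1}. For the first stage, by Fubini and a pigeonhole that splits $I_{\rho,M}$ into $I^{\theta_0}_{\rho/2,M}$ and its complement, I would locate a time $t^{\ast}\in I_{\rho,M}\setminus I^{\theta_0}_{\rho/2,M}$, situated earlier than the sub-cylinder appearing in the conclusion, at which
\[
m_n\bigl(B_\rho\cap\{u(t^{\ast})<\mu^{-}+\omega/2\}\bigr)\geq\tilde\theta\,m_n(B_\rho),
\]
with $\tilde\theta=\tilde\theta(\theta_0)>0$.

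The core of the argument is a logarithmic Caccioppoli estimate. For an integer $s\geq 1$ and $k_s:=\mu^{+}-\omega/2^{s+1}$, I would test the equation \eqref{eq:3} against $\Psi_s'(u)\zeta^2$, where $\Psi_s$ is a DiBenedetto-type truncated logarithmic function of $(u-k_s)_+$ calibrated so that $\Psi_s\lesssim s$ pointwise and $\Psi_s\gtrsim s$ on the upper set $\{u>\mu^{+}-\omega/2^{s+2}\}$, and $\zeta=\zeta(x)$ is a time-independent spatial cut-off with $\zeta\equiv 1$ on $B_{\rho/2}$. Following the derivation of Lemma \ref{lem:3}, the degenerate time derivative produces a coefficient $(\mu^{+})^{1-1/m}\sim M^{1-1/m}$ that is precisely matched by the intrinsic time-scale of $Q_{\rho,M}$, while the $f,g$-contributions are absorbed through
\[
\rho^{\sigma_0}\leq\delta_1\,\omega\,M^{-\frac{1}{q}(1-\frac{1}{m})}h(\rho,M,\omega)^{-1/2},
\]
where $\delta_1$ is chosen appropriately smaller than the $\delta_0$ of Proposition \ref{prop:1}. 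Since the bound at $t^{\ast}$ obtained in the first step controls the initial value of the logarithmic functional, forward propagation to every $t\in I^{\theta_0}_{\rho/2,M}$ together with Chebyshev's inequality yields
\[
m_n\bigl(B_{\rho/2}\cap\{u(t)>\mu^{+}-\omega/2^{s+1}\}\bigr)\leq\frac{C(n,m,\theta_0)}{s}\,m_n(B_{\rho/2}).
\]
Taking $s=s_0$ large (depending only on $n,m,p,q,\theta_0$) renders the right-hand side smaller than any prescribed threshold $\nu_0>0$ to be chosen by the iteration step.

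With $\nu_0$ fixed, I would run a De Giorgi iteration on $Q^{\theta_0}_{\rho/2,M}$ for super-level sets: testing against $(u-k)_+\zeta^2$ produces the Caccioppoli estimate symmetric to Lemma \ref{lem:3}, and Lemma \ref{lem:9} then drives $Y_j\to 0$ along the nested sequences
\[
k_j:=\mu^{+}-\frac{\omega}{2^{s_0+1}}-\frac{\omega}{2^{s_0+1+j}},\qquad
\rho_j:=\frac{\rho}{4}+\frac{\rho}{2^{j+2}},
\]
starting from the initial smallness $Y_0\leq\nu_0$ supplied by the logarithmic step. This yields $u\leq\mu^{+}-\omega/2^{s_0+2}$ on $Q^{\theta_0}_{\rho/2,M}$, and setting $\eta_1:=2^{-(s_0+2)}$ concludes the proof. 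The principal obstacle is the logarithmic Caccioppoli inequality of the second step: the degeneracy and non-additivity of the PME require the test function and its normalisations to be tuned so that the resulting estimate is scale-invariant with respect to the intrinsic time-length $\rho^2/M^{1-1/m}$, and simultaneously the weak-$L^p$ external forces must be absorbed uniformly in the parameter $s$ using the smallness of $\rho^{\sigma_0}$; this is exactly where the constant $\delta_1$ is consumed.
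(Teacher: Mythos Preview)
Your three-stage architecture matches the paper's (Lemma~\ref{lem:4}, Lemma~\ref{lem:5}, then a De~Giorgi iteration), but the quantitative output you claim from the logarithmic step is too strong, and this is a genuine gap. The information at $t^{\ast}$ says only that $m_n(\{u(t^{\ast})>\mu^-+\omega/2\})\le(1-\tilde\theta)\,m_n(B_\rho)$; it gives no pointwise control of $\Psi_s(u(t^{\ast}))$ on that set, where $\Psi_s$ can be as large as $\sim s$. Hence the initial value of the logarithmic functional is $\lesssim s^{2}(1-\tilde\theta)\,m_n(B_\rho)$, and after forward propagation and Chebyshev the leading term in the resulting bound is $(1-\tilde\theta)$, not $C/s$: as $s\to\infty$ the estimate tends to $(1-\tilde\theta)\,m_n(B_\rho)$, not to zero. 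This is precisely what Lemma~\ref{lem:5} delivers---a bound strictly below $1$ on each time slice, uniformly in $t\in I^{\theta_0}_{\rho,M}$, for one fixed level $r_0$---and no more. (Separately, your stated calibration is inconsistent: with $k_s=\mu^{+}-\omega/2^{s+1}$ the height is $H_s=\omega/2^{s+1}$, and at the threshold $u=\mu^{+}-\omega/2^{s+2}$ one has $H_s-(u-k_s)_+=\omega/2^{s+2}$, comparable to $H_s$, so $\Psi_s$ there is $O(1)$, not $\gtrsim s$.)

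The paper supplies the missing smallness through an additional step you have omitted. Once every slice satisfies $m_n(\{u(t)>\mu^{+}-\omega/2^{r_0}\})\le(1-(\theta_0/2)^2)\,m_n(B_\rho)$, one applies the De~Giorgi isoperimetric inequality (Proposition~\ref{prop:3}) slice by slice, combines it with the super-level Caccioppoli estimate (Lemma~\ref{lem:6}), and sums over dyadic levels $j=r_0,\dots,q_0$. The telescoping of the disjoint shells $\{\mu^{+}-\omega/2^{j}<u\le\mu^{+}-\omega/2^{j+1}\}$ then yields
\[
m_{n+1}\bigl(Q^{\theta_0}_{3\rho/4,M}\cap\{u>\mu^{+}-\omega/2^{q_0+1}\}\bigr)\;\lesssim\;(q_0-r_0)^{-1/2}\,m_{n+1}\bigl(Q^{\theta_0}_{3\rho/4,M}\bigr)
\]
(Lemma~\ref{lem:7}), which \emph{does} become arbitrarily small by choosing $q_0$ large. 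Only after this hole-filling step can the final De~Giorgi iteration be launched. Note also that in the logarithmic estimate the $f,g$-terms carry factors $(\psi')^2\sim(2^{r_0}/\omega)^2$, so $\delta_1$ must depend on the chosen level; this is harmless once $r_0$ and $q_0$ are fixed in advance as in the paper, but it is a second reason why ``absorbing uniformly in $s$'' while sending $s\to\infty$ cannot work.
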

Taking $\theta_0$ as in Proposition \ref{prop:1},
$\delta_1,\,\eta_1>0$ as in Proposition \ref{prop:2} and
\[
 \delta_0=\min\{1,\delta_1\}\,,\,\eta_0=\min\Bigl\{\frac14,\eta_1\Bigr\},
\]
we obtain Lemma \ref{lem:1}.

To prove Proposition \ref{prop:2}, we first show measure estimates of
sub level sets of some time slice.
\begin{lemma}
 \label{lem:4}
 Let $0<\theta_0<1$. If 
\begin{equation}
 \label{eq:17}
  m_{n+1}\left(Q_{\rho,M}\cap\biggl\{u<\mu^-+\frac\omega2\biggr\}\right)
>\theta_0m_{n+1}\bigl(Q_{\rho,M}\bigr),
\end{equation}
 then for all $0<\theta<\theta_0$, there exists 
 $-\frac{\rho^2}{M^{1-\frac1m}}<\tau_0<-\theta\frac{\rho^2}{M^{1-\frac1m}}$
 depending only on $\theta$ and $\theta_0$ such that
 \[
 m_n\biggl(B_\rho\cap\biggl\{u(\tau_0)>\mu^-+\frac{\omega}2\biggr\}\biggr)
 \leq\frac{1-\theta_0}{1-\theta}m_n\bigl(B_\rho\bigr).
 \]
\end{lemma}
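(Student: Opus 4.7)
The plan is to run a one-dimensional pigeonhole argument on the time variable. The hypothesis \eqref{eq:17} bounds the measure of the sub-level set $\{u<\mu^-+\omega/2\}$ from below, so by passing to the complement inside $Q_{\rho,M}$ one obtains an upper bound for the super-level set: modulo the (null) level surface,
\[
 \int_{I_{\rho,M}} m_n\Bigl(B_\rho\cap\bigl\{u(\tau)>\mu^-+\tfrac{\omega}{2}\bigr\}\Bigr)\,d\tau
 \;<\;(1-\theta_0)\,\frac{\rho^2}{M^{1-\frac{1}{m}}}\,m_n(B_\rho).
\]

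Next I would fix $0<\theta<\theta_0$ and argue by contradiction. Suppose that for almost every $\tau$ in the sub-interval $\bigl(-\tfrac{\rho^2}{M^{1-1/m}},\,-\theta\tfrac{\rho^2}{M^{1-1/m}}\bigr)$ the conclusion fails, i.e.
\[
 m_n\Bigl(B_\rho\cap\bigl\{u(\tau)>\mu^-+\tfrac{\omega}{2}\bigr\}\Bigr)
 \;>\;\frac{1-\theta_0}{1-\theta}\,m_n(B_\rho).
\]
Integrating this pointwise lower bound over the sub-interval of length $(1-\theta)\tfrac{\rho^2}{M^{1-1/m}}$ produces
\[
 \int_{-\rho^2/M^{1-\frac{1}{m}}}^{-\theta\rho^2/M^{1-\frac{1}{m}}}
 m_n\Bigl(B_\rho\cap\bigl\{u(\tau)>\mu^-+\tfrac{\omega}{2}\bigr\}\Bigr)\,d\tau
 \;>\;(1-\theta_0)\,\frac{\rho^2}{M^{1-\frac{1}{m}}}\,m_n(B_\rho),
\]
which already exceeds the bound on the integral over the full interval $I_{\rho,M}$ obtained in the first step, a contradiction. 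Consequently there must exist $\tau_0$ in the stated time window where the desired measure estimate holds.

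I do not foresee a serious obstacle: the argument is purely a Fubini/pigeonhole computation and does not use the equation \eqref{eq:3}, only the measure-theoretic hypothesis \eqref{eq:17} and the definition of $I_{\rho,M}$. The only minor care point is to make sure that passing from the sub-level set to the super-level set is legitimate (the set $\{u=\mu^-+\omega/2\}$ contributes zero to both sides, so the complement identity $m_n(B_\rho)=m_n(\{u<\cdot\}\cap B_\rho)+m_n(\{u>\cdot\}\cap B_\rho)$ holds after discarding a null set in the space-time integral), and that the ratio $(1-\theta_0)/(1-\theta)\in(0,1)$ so the conclusion is non-trivial, which is automatic from $0<\theta<\theta_0<1$.
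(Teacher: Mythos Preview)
Your proposal is correct and follows essentially the same approach as the paper: both pass from the sub-level hypothesis to an upper bound on the time-integrated measure of the super-level set and then run a pigeonhole/contradiction argument on the sub-interval of length $(1-\theta)\rho^2/M^{1-1/m}$. The paper first rescales time via $t=\tfrac{\rho^2}{M^{1-1/m}}s$ so the computation lives on $(-1,0)$, but that is purely cosmetic; your concern about the level surface is also unnecessary, since $\{u>k\}\subset\{u\geq k\}=Q_{\rho,M}\setminus\{u<k\}$ already gives the strict inequality without discarding any null set.
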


\begin{proof}%
 By the change of variable $t=\frac{\rho^2}{M^{1-\frac1m}}s$,
 $\tilde{u}(s,x)=u(t,x)$ and \eqref{eq:17}, we obtain
 \[
 \begin{split}
  \int_{-1}^0
  m_n\left(B_\rho\cap\biggl\{\tilde{u}(s)>\mu^-+\frac\omega2\biggr\}\right)
  \,ds 
  &=\frac{M^{1-\frac1m}}{\rho^2}
  m_{n+1}\left(Q_{\rho,M}\cap\biggl\{u>\mu^-+\frac\omega2\biggr\}\right) \\
  &\leq\frac{M^{1-\frac1m}}{\rho^2}
  \biggl(m_{n+1}\bigl(Q_{\rho,M}\bigr)
  -m_{n+1}\left(Q_{\rho,M}\cap\biggl\{u<\mu^-+\frac\omega2\biggr\}\right)
  \biggr) \\
  &<\frac{M^{1-\frac1m}}{\rho^2}
  (1-\theta_0)m_{n+1}\bigl(Q_{\rho,M}\bigr)
  =(1-\theta_0)m_{n}\bigl(B_\rho\bigr).
 \end{split} 
\]
 If $m_n\left(B_\rho\cap\{u(s)>\mu^-+\frac{\omega}2\}\right)
 >\frac{1-\theta_0}{1-\theta}m_n\left(B_\rho\right)$ for all
 $-1<s<-\theta$, then
 \[
  \begin{split}
   \int_{-1}^0
   m_n\left(B_\rho\cap\biggl\{\tilde{u}(s)>\mu^-+\frac\omega2\biggr\}\right)
   \,ds 
   &\geq \int_{-1}^{\theta_0}
   m_n\left(B_\rho\cap\biggl\{\tilde{u}(s)>\mu^-+\frac\omega2\biggr\}\right)
   \,ds \\
   &\geq(1-\theta_0)m_n\bigl(B_\rho\bigr),
  \end{split}
 \]
 which is contradiction.
\end{proof}

We next show Bernstein type estimates for the positive part of solutions.

\begin{lemma}
 \label{lem:5}
 There exist $r_0,\delta_2>0$ depending only on $n,m,p,q$ and $\theta_0$ such that 
 \[
 m_n\biggl(B_\rho\cap
 \biggl\{u(t)>\mu^+-\frac{\omega}{2^{r_0}}\biggr\}
 \biggr)
 \leq\biggl(1-\biggl(\frac{\theta_0}{2}\biggr)^2\biggr)
 m_n\bigl(B_\rho\bigr)
 \]
 for $t\in I^{\theta_0}_{\rho,M}$, provided
 $\rho^{\sigma_0}
 \leq\delta_2\omega{M^{-\frac1q(1-\frac1m)}}h(\rho,M,\omega)^{-\frac12}$.
\end{lemma}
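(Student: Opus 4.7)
The plan is to combine Lemma~\ref{lem:4} with a logarithmic (De\,Giorgi--DiBenedetto) energy estimate that propagates time-slice measure information forward across $I^{\theta_0}_{\rho,M}\times B_\rho$.

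\emph{Step 1 (initial slice).} Apply Lemma~\ref{lem:4} with $\theta=\theta_0/2$ to obtain
$\tau_0\in\bigl(-\rho^2/M^{1-1/m},\,-\tfrac{\theta_0}{2}\rho^2/M^{1-1/m}\bigr)$
at which
$m_n(B_\rho\cap\{u(\tau_0)>\mu^-+\omega/2\})\leq\tfrac{1-\theta_0}{1-\theta_0/2}\,m_n(B_\rho)$;
in particular $(\tau_0,0)\supset I^{\theta_0}_{\rho,M}$. Put $k=\mu^-+\omega/2$ and $H_k=\mu^+-k$, so that $\omega/4\leq H_k\leq\omega/2$ by \eqref{eq:5}. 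A case split on $\mu^-\leq\mu^+/2$ versus $\mu^->\mu^+/2$ (mirroring the treatment of $I_1$ in Lemma~\ref{lem:3}) gives $k\geq\mu^+/4$, whence $k^{1/m-1}\asymp(\mu^+)^{1/m-1}$ with constants depending only on $m$.

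\emph{Step 2 (logarithmic estimate).} For $r_0\in\N$ to be chosen large, introduce
\[\Psi(s):=\Bigl[\ln\tfrac{H_k}{H_k-(s-k)_++H_k/2^{r_0}}\Bigr]_+,\]
which vanishes on $\{s\leq k\}$, satisfies $\Psi\leq r_0\ln 2$ and $\Psi''=(\Psi')^2\geq 0$, and obeys $\Psi\geq(r_0-2)\ln 2$ on $\{s>\mu^+-\omega/2^{r_0}\}$. Test \eqref{eq:3} against $2\Psi(u)\Psi'(u)\zeta^2(x)$, where $\zeta\in C_c^\infty(B_\rho)$ is a time-independent cutoff with $\zeta\equiv 1$ on $B_{(1-\sigma)\rho}$ and $|\nabla\zeta|\leq 2/(\sigma\rho)$. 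The degenerate time derivative is absorbed via the primitive $F(u)=\tfrac{2}{m}\int_k^u\xi^{1/m-1}\Psi(\xi)\Psi'(\xi)\,d\xi$, which by Step~1 satisfies $F(u)\asymp(\mu^+)^{1/m-1}\Psi^2(u)$. Integration by parts in the diffusion, together with $\Psi''\geq 0$ and Young's inequality, yields a good LHS piece $\int\int 2(\Psi')^2|\nabla u|^2\zeta^2$ and the standard error $\int\int 2\Psi|\nabla\zeta|^2$. The forcing terms, estimated via the weak-$L^p$ H\"older inequality as in Lemma~\ref{lem:3}, produce $(\Psi')^2|\nabla u|^2$-type contributions absorbed by the good LHS piece and $(\Psi')^2|f|^2$-type contributions controlled by splitting the integration domain according to the size of $\Psi'$ and invoking the hypothesis $\rho^{\sigma_0}\leq\delta_2\,\omega\,M^{-(1/q)(1-1/m)}h(\rho,M,\omega)^{-1/2}$. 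The outcome is, for every $t\in(\tau_0,0)$,
\begin{equation*}
\int_{B_{(1-\sigma)\rho}}\!\Psi^2(u(t))\,dx\leq C\!\int_{B_\rho}\!\Psi^2(u(\tau_0))\,dx+\tfrac{Cr_0}{\sigma^2}m_n(B_\rho)+Cr_0\,\delta_2^{2}\,m_n(B_\rho),
\end{equation*}
with $C$ depending only on $n,m,p,q$; crucially, the $r_0$-dependence on the right is at most linear.

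\emph{Step 3 (conclusion).} By Step~1, $\int_{B_\rho}\Psi^2(u(\tau_0))\,dx\leq r_0^2\ln^2 2\cdot\tfrac{1-\theta_0}{1-\theta_0/2}m_n(B_\rho)$, while $\Psi^2\geq(r_0-2)^2\ln^2 2$ on $\{u(t)>\mu^+-\omega/2^{r_0}\}$. Dividing through by $(r_0-2)^2\ln^2 2$ and adding the trivial estimate $m_n(B_\rho\setminus B_{(1-\sigma)\rho})\leq n\sigma\,m_n(B_\rho)$ gives
\begin{equation*}
m_n\bigl(B_\rho\cap\{u(t)>\mu^+-\omega/2^{r_0}\}\bigr)\leq\Bigl[n\sigma+\Bigl(\tfrac{r_0}{r_0-2}\Bigr)^2\tfrac{1-\theta_0}{1-\theta_0/2}+\tfrac{Cr_0}{\sigma^2(r_0-2)^2}+\tfrac{Cr_0\delta_2^{2}}{(r_0-2)^2}\Bigr]m_n(B_\rho).
\end{equation*}
Using $\tfrac{1-\theta_0}{1-\theta_0/2}=1-\tfrac{\theta_0}{2-\theta_0}\leq 1-\tfrac{\theta_0}{2}$ and the genuine margin $\tfrac{\theta_0}{2}-(\tfrac{\theta_0}{2})^2=\tfrac{\theta_0(2-\theta_0)}{4}>0$ (valid since $\theta_0\in(0,1)$), choose in order $\sigma$ small, $r_0$ large, and $\delta_2$ small -- each depending only on $n,m,p,q,\theta_0$ -- to bring the bracketed factor strictly below $1-(\theta_0/2)^2$, proving the claim. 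The principal technical obstacle is Step~2: naive Young inequality on the forcing introduces factors of $(\Psi')^2\leq 2^{2r_0}/H_k^2$ blowing up exponentially in $r_0$, so one must exploit the good LHS piece $\int\int 2(\Psi')^2|\nabla u|^2\zeta^2$ and the $\rho^{\sigma_0}$-smallness carefully to keep the overall $r_0$-dependence of the error linear -- exactly what is required for absorption by $(r_0-2)^{-2}$ in Step~3.
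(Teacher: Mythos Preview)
Your overall strategy and Steps~1 and~3 match the paper's proof. There are, however, two genuine gaps in Step~2.

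\textbf{The constant $C$ in front of $\int_{B_\rho}\Psi^2(u(\tau_0))\,dx$.} You test \eqref{eq:3} directly, so the time term is $\partial_t F(u)$ with $F(u)=\frac1m\int_k^u s^{1/m-1}(\Psi^2)'(s)\,ds$. The comparison $F(u)\asymp(\mu^+)^{1/m-1}\Psi^2(u)$ carries the ratio $(\mu^+/k)^{1-1/m}$, which can be as large as $2^{1-1/m}>1$. This factor multiplies your leading bracket term $\bigl(\frac{r_0}{r_0-2}\bigr)^2\frac{1-\theta_0}{1-\theta_0/2}$, and you silently drop it between Step~2 and Step~3. With it present, for small $\theta_0$ the leading term already exceeds $1$, so no choice of $\sigma,r_0,\delta_2$ can bring the bracket below $1-(\theta_0/2)^2$. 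The paper avoids this by first rewriting \eqref{eq:3} as
\[
\partial_t u - m u^{1-\frac1m}\Delta u = -m u^{1-\frac1m}\Div f + m u^{1-\frac1m}g
\]
and testing with $(\psi^2)'(u)\eta^2$; then the time term is exactly $\frac12\partial_t\psi^2(u)$ with coefficient~$1$, and the leading bracket term is genuinely $\frac{1-\theta_0}{1-\theta_0/2}<1$.

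\textbf{Linear $r_0$-dependence of the forcing error.} The claim that the forcing contributes only $Cr_0\,\delta_2^2\,m_n(B_\rho)$ is not substantiated, and the suggested ``splitting by the size of $\Psi'$'' does not achieve it: the good LHS piece $\iint(\Psi')^2|\nabla u|^2\zeta^2$ absorbs only the $f\cdot\nabla u$ cross terms, leaving a pure $\iint u^{1-1/m}(\Psi')^2(1+\Psi)|f|^2\zeta^2$ behind, and on the set where $u$ is close to $\mu^+$ one has $(\Psi')^2\sim 2^{2r_0}/\omega^2$ with no compensating smallness of measure available. The paper makes no such claim; it accepts a forcing error of order $2^{2r_0}r_0$ times $\rho^{2\sigma_0}\omega^{-2}M^{\frac2q(1-\frac1m)}h(\rho,M,\omega)$ and simply absorbs it into $\delta_2$, which is legitimate because $\delta_2$ is chosen \emph{after} $r_0$. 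Since you already order the choices $\sigma\to r_0\to\delta_2$, replace $Cr_0\delta_2^2$ by $C(r_0)\delta_2^2$ with $C(r_0)$ allowed to grow like $2^{2r_0}r_0$, and drop the final paragraph asserting linear dependence.
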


\begin{proof}%
 We rewrite \eqref{eq:3} as
 \[
  \partial_tu-m u^{1-\frac1m}\Delta u
 =-m u^{1-\frac1m}\Div{f}
 +m u^{1-\frac1m}g.
 \]
 Let 
 \[
 \psi(\xi):=\log_+\Biggl(\frac{H}{H-(\xi-k)_++c}\Biggr),
 \]
 where
 $k=\mu^-+\frac\omega2$\,,\,$
 H=\mu^+-k=\osc_{Q_{\rho,M}}u-\frac\omega2$\,,\,
 $c=\frac{\omega}{2^{r_0}}$
 and $r_0>2$ be chosen later. We remark that $\psi,\psi',\psi''=(\psi')^2\geq0$,
 where $f'=\frac{df}{d\xi}$. We take the cut-off function
 $\eta=\eta(x)$ as 
 \[
 \eta\in C_0^\infty(B_\rho),\ \eta\equiv1\ \text{on}\
 B_{(1-\sigma)\rho}\quad \text{and}\ \,
 |\nabla\eta|\leq\frac{2}{\sigma\rho},
 \]
 where $\sigma>0$ will be chosen later. 
 Putting $w=\psi(u)$ and taking the test function
 $(\psi^2)'(u)\eta^2$ in $(\tau_0,t)\times B_\rho$, 
 where $\tau_0$ will be chosen later, we have
\begin{multline*}
  \frac12\int_{B_\rho}w^2\eta^2\,dx\bigg|_{\tau_0}^t
 +m\int_{\tau_0}^t\int_{B_\rho}
 (\nabla u\cdot\nabla(u^{1-\frac1m}(\psi^2)'\eta^2))\,dtdx \\
 =m\int_{\tau_0}^t\int_{B_\rho}
 ({f}\cdot\nabla((u^{1-\frac1m}(\psi^2)'\eta^2))\,dtdx
 +m\int_{\tau_0}^t\int_{B_\rho}
 u^{1-\frac1m}g(\psi^2)'\eta^2\,dtdx.
\end{multline*} 
Since
 \begin{equation*}
   \nabla((u^{1-\frac1m}(\psi^2)'\eta^2) 
  =\Bigl(1-\frac1m\Bigr)u^{-\frac1m}(\psi^2)'\eta^2\nabla u
  +u^{1-\frac1m}(\psi^2)''\eta^2\nabla u
  +u^{1-\frac1m}(\psi^2)'\nabla\eta^2,
 \end{equation*}   
  we obtain
\begin{equation*}
 \begin{split}
  &\quad\frac12\int_{B_\rho}w^2\eta^2\,dx\bigg|_{\tau_0}^t
  +(m-1)\int_{\tau_0}^t\int_{B_\rho}
  u^{-\frac1m}(\psi^2)'|\nabla u|^2\eta^2\,dtdx 
  +m\int_{\tau_0}^t\int_{B_\rho}
  u^{1-\frac1m}(\psi^2)''|\nabla u|^2\eta^2\,dtdx \\
  &=-m\int_{\tau_0}^t\int_{B_\rho}
  u^{1-\frac1m}(\psi^2)'(\nabla u\cdot\nabla\eta^2)\,dtdx 
  +(m-1)\int_{\tau_0}^t\int_{B_\rho}u^{-\frac1m}(g^2)'
  ({f}\cdot\nabla u)\eta^2\,dtdx \\
  &\quad+m\int_{\tau_0}^t\int_{B_\rho}u^{1-\frac1m}(\psi^2)''
  ({f}\cdot\nabla u)\eta^2\,dtdx 
  +m\int_{\tau_0}^t\int_{B_\rho}u^{1-\frac1m}(\psi^2)'
  ({f}\cdot\nabla\eta^2)\,dtdx \\
  &\quad+m\int_{\tau_0}^t\int_{B_\rho}u^{1-\frac1m}(\psi^2)'
  g\eta^2\,dtdx \\
  &=:I_1+I_2+I_3+I_4+I_5.
 \end{split}
\end{equation*}
 Using the property $(\psi^2)'\nabla u=2w\nabla w$ and the Young inequality, we have
 \[
 \begin{split}
  I_1
  &\leq
  m\int_{\tau_0}^t\int_{B_\rho}u^{1-\frac1m}w|\nabla w|^2\eta^2\,dtdx 
  +4m\int_{\tau_0}^t\int_{B_\rho}u^{1-\frac1m}w|\nabla\eta|^2\,dtdx, \\
  I_2
  &\leq \frac{m-1}2\int_{\tau_0}^t\int_{B_\rho}u^{-\frac1m}(\psi^2)'
  |\nabla u|^2\eta^2\,dtdx 
  +\frac{m-1}2\int_{\tau_0}^t\int_{B_\rho}u^{-\frac1m}(\psi^2)'
  |{f}|^2\eta^2\,dtdx, \\
  I_3
  &\leq \frac{m}{4}\int_{\tau_0}^t\int_{B_\rho}u^{1-\frac1m}(\psi^2)''
  |\nabla u|^2\eta^2\,dtdx
  +m\int_{\tau_0}^t\int_{B_\rho}u^{1-\frac1m}(\psi^2)''
  |{f}|^2\eta^2\,dtdx, \\
  I_4
  &\leq4m\int_{\tau_0}^t\int_{B_\rho}u^{1-\frac1m}w\psi'|{f}||\nabla\eta|\eta\,dtdx \\
  &\leq2m\int_{\tau_0}^t\int_{B_\rho}u^{1-\frac1m}w|\nabla\eta|^2\,dtdx
  +2m\int_{\tau_0}^t\int_{B_\rho}u^{1-\frac1m}(g')^2w|{f}|^2\eta^2\,dtdx, \\
  I_5&
  \leq 2m\int_{\tau_0}^t\int_{B_\rho}
  u^{1-\frac1m}w\psi'|g|\eta^2\,dtdx.
 \end{split}
 \]
 Since $\psi''=(\psi')^2,\,(\psi^2)''=2(\psi')^2(1+\psi)$,
 we have
\begin{multline*}
 m\int_{\tau_0}^t\int_{B_\rho}
 u^{1-\frac1m}(\psi^2)''|\nabla u|^2\eta^2\,dtdx \\
 =2m\int_{\tau_0}^t\int_{B_\rho}
 u^{1-\frac1m}|\nabla w|^2\eta^2\,dtdx
 +2m\int_{\tau_0}^t\int_{B_\rho}
 u^{1-\frac1m}w|\nabla w|^2\eta^2\,dtdx.
\end{multline*} 
 Combining the above estimates, we have
 \begin{equation}
  \label{eq:19}
   \begin{split}
    &\quad\frac12\int_{B_\rho}w^2(t)\eta^2(t)\,dx
    +\frac{m-1}2\int_{\tau_0}^t\int_{B_\rho}
    u^{-\frac1m}(\psi^2)'\eta^2|\nabla u|^2\,dtdx \\
    &\qquad+\frac32m\int_{\tau_0}^t\int_{B_\rho}
    u^{1-\frac1m}|\nabla w|^2\eta^2\,dtdx
    +\frac{m}{2}\int_{\tau_0}^t\int_{B_\rho}
    u^{1-\frac1m}w|\nabla w|^2\eta^2\,dtdx \\
    &\leq
    \frac12\int_{B_\rho}w^2(\tau_0)\eta^2(\tau_0)\,dx
    +6m\int_{\tau_0}^t\int_{B_\rho}
    u^{1-\frac1m}w|\nabla\eta|^2\,dtdx \\
    &\quad+\frac{m-1}2\int_{\tau_0}^t\int_{B_\rho}
    u^{-\frac1m}(\psi^2)'
    |{f}|^2\eta^2\,dtdx 
    +2m\int_{\tau_0}^t\int_{B_\rho}u^{1-\frac1m}(\psi')^2(1+2w)
    |{f}|^2\eta^2\,dtdx \\
    &\quad+2m\int_{\tau_0}^t\int_{B_\rho}u^{1-\frac1m}w\psi'
    |g|\eta^2\,dtdx \\
    &=:I_6+I_7+I_8+I_9+I_{10}.
   \end{split} 
 \end{equation}
 For simplicity, we put
 $k'=\mu^+-c=\mu^+-\frac{\omega}{2^{r_0}}$. First, we estimate the 
 left-hand side of \eqref{eq:19}. Since $k'>k$, we have
 \begin{equation}
  \label{eq:20}
  \begin{split}
   \frac12\int_{B_\rho}w^2(t)\eta^2(t)\,dx
   &\geq\frac12\int_{B_{(1-\sigma)\rho}\cap\{u(t)>k'\}}w^2(t)\,dx \\
   &\geq\frac12\int_{B_{(1-\sigma)\rho}\cap\{u(t)>k'\}}
   \log^2\biggl(\frac{H}{H-(k'-k)+c}\biggr)\,dx \\
   &\geq\frac12\log^2\biggl(\frac{\frac\omega4}{\frac\omega{2^{r_0-1}}}\biggr)
   m_n\Bigl(B_{(1-\sigma)\rho}\cap\{u(t)>k'\}\Bigr) \\
   &=\frac12(r_0-3)^2\log^22m_n\Bigl(B_{(1-\sigma)\rho}\cap\{u(t)>k'\}\Bigr).
  \end{split}
 \end{equation}
 Second, we estimate $I_6$. Taking $\tau_0$ as in Lemma \ref{lem:4}
 with $\theta=\frac{\theta_0}{2}$, we obtain
 \[
 w=\log_+\biggl(\frac{H}{H-(u-k)_++c}\biggr)
 \leq\log\biggl(\frac{\frac12\omega}{\frac{1}{2^{r_0}}\omega}\biggr)
 =(r_0-1)\log2
 \]
 and hence
 \begin{equation}
   \begin{split}
    I_6&\leq\frac12\int_{B_\rho\cap\{u({\tau_0})>k\}}w^2({\tau_0})\,dx \\
    &\leq\frac12(r_0-1)^2\log^22|B_\rho\cap\{u({\tau_0})>k\}| 
    \leq\frac12\cdot\frac{1-\theta_0}{1-\frac{\theta_0}2}
   (r_0-1)^2\log^22m_n\bigl(B_\rho\bigr).
   \end{split}
 \end{equation}
 We estimate $I_7$. From
 $t-{\tau_0}\leq\frac{\rho^2}{M^{1-\frac1m}}$ and
 \eqref{eq:4}, we have
 \begin{equation}
  \begin{split}
   I_7
   &\leq6m(\mu^+)^{1-\frac1m}(t-{\tau_0})(r_0-1)\log2
   \biggl(\frac{2}{\sigma\rho}\biggr)^2m_n\bigl(B_\rho\bigr) \\
   &\leq C(m)\Bigl(\frac{r_0-1}{\sigma^2}\Bigr)m_n\bigl(B_\rho\bigr).
  \end{split} 
\end{equation}
 We estimate $I_8$. Since
 \[
 \psi'\leq\frac{1}{H-(u-k)_++c}
 \leq\frac1c=\frac{2^{r_0}}{\omega},\quad 
 (\psi^2)'=2\psi\psi'
 \leq\frac{2^{r_0+1}}{\omega}(r_0-1)\log2
 \]
 and 
 \[
 u^{-\frac1m}\leq k^{-\frac1m}
 \leq\biggl(\frac\omega2\biggr)^{-\frac1m}
 \quad \text{for}\ u\geq k, 
 \]
 we have
 \[
 I_8
 \leq C(m)(r_0-1)2^{r_0}
 \omega^{-1-\frac1m}
 \int_{\tau_0}^t\int_{B_\rho\cap\{u(s)>k\}}|{f}|^2\,dtdx.
 \]
 By the definition of the weak $L^p$ space and by the H\"older inequality, we
 have
 \[
 \begin{split}
  \int_{\tau_0}^t\int_{B_\rho\cap\{u(s)>k\}}|{f}|^2\,dtdx 
  &\leq\int_{\tau_0}^t
  \big\||{f}(s)|^2\big\|_{L^\frac{p}{2}_\mathrm{w}(B_\rho)}
  m_n\Bigl(B_\rho\cap\{u(s)>k\}\Bigr)^{1-\frac2p}\,ds \\ 
  &\leq C(n,p)
  M^{\frac2q(1-\frac1m)}
  \big\||{f}|^2\big\|_{L^\frac{q}{2}(L^\frac{p}{2}_\mathrm{w})(Q_{\rho,M})}
  \rho^{2\sigma_0}
 \frac{m_n\bigl(B_\rho\bigr)}{M^{1-\frac1m}}.
 \end{split} 
\]
Using \eqref{eq:5}, we obtain
 \begin{equation}
   \begin{split}
    I_8
    &\leq C(n,m,p)
    \Bigl(\frac{\rho^{2\sigma_0}}{\omega^2}
    M^{\frac2q(1-\frac1m)}
    \big\||{f}|^2\big\|_{L^\frac{q}{2}(L^\frac{p}{2}_\mathrm{w})(Q_{\rho,M})}
    \Bigr) \\
    &\quad\times
    \Bigl(\frac\omega{M}\Bigr)^{1-\frac1m}
    (r_0-1)2^{r_0}      
    m_n\bigl(B_\rho\bigr) \\
    &\leq C(n,m,p)
    \Bigl(\frac{\rho^{2\sigma_0}}{\omega^2}
    M^{\frac2q(1-\frac1m)}
    \big\||{f}|^2\big\|_{L^\frac{q}{2}(L^\frac{p}{2}_\mathrm{w})(Q_{\rho,M})}
    \Bigr)
    (r_0-1)2^{r_0}      
    m_n\bigl(B_\rho\bigr).
   \end{split} 
 \end{equation}
 We estimate $I_9$ and $I_{10}$. Considering the same calculation for
 $I_8$, we have
 \begin{equation}
    I_9
   \leq C(n,m,p)
   \Bigl(\frac{\rho^{2\sigma_0}}{\omega^2}
   M^{\frac2q(1-\frac1m)}
   \big\||{f}|^2\big\|_{L^\frac{q}{2}(L^\frac{p}{2}_\mathrm{w})(Q_{\rho,M})}
   \Bigr) 
   2^{2r_0}(1+2(r_0-1)\log2)
 m_n\bigl(B_\rho\bigr)
 \end{equation}
 and
 \begin{equation}
  \label{eq:37}
   I_{10}
   \leq C(n,m,p)
   \Bigl(\frac{\rho^{2\sigma_0}}{\omega}
   M^{\frac2q(1-\frac1m)}
   \big\|g\big\|_{L^\frac{q}{2}(L^\frac{p}{2}_\mathrm{w})(Q_{\rho,M})}
   \Bigr)
   2^{r_0}(r_0-1)
   m_n\bigl(B_\rho\bigr).
 \end{equation}
 Combining estimates
 \eqref{eq:20}--\eqref{eq:37},
 we have
 \begin{multline*}
 m_n\Bigl(B_{(1-\sigma)\rho}\cap\{u(t)>k'\}\Bigr)
  \leq\Biggl\{\frac{1-\theta_0}{1-\frac{\theta_0}2}
  \biggl(\frac{r_0-1}{r_0-3}\biggr)^2
  +\frac{C_1(m)}{\sigma^2}\frac{r_0-1}{(r_0-3)^2} \\
  +C_2(n,m,p)
  \Bigl(\frac{\rho^{2\sigma_0}}{\omega^2}
   M^{\frac2q(1-\frac1m)}
  \big\||{f}|^2\big\|_{L^\frac{q}{2}(L^\frac{p}{2}_\mathrm{w})(Q_{\rho,M})}
  \Bigr)
  \frac{2^{r_0}(r_0-1)}{(r_0-3)^2} \\
  +C_3(n,m,p)
  \Bigl(\frac{\rho^{2\sigma_0}}{\omega^2}
   M^{\frac2q(1-\frac1m)}
  \big\||{f}|^2\big\|_{L^\frac{q}{2}(L^\frac{p}{2}_\mathrm{w})(Q_{\rho,M})}
  \Bigr)
  \frac{2^{2r_0}(1+2(r_0-1)\log2)}{(r_0-3)^2} \\
  +C_4(n,m,p)
  \Bigl(\frac{\rho^{2\sigma_0}}{\omega}
  M^{\frac2q(1-\frac1m)}
  \big\|g\big\|_{L^\frac{q}{2}(L^\frac{p}{2}_\mathrm{w})(Q_{\rho,M})}
  \Bigr)
  \frac{2^{r_0}(r_0-1)}{(r_0-3)^2}
  \Biggr\}m_n\bigl(B_\rho\bigr).
 \end{multline*}
 Since
 \[
\begin{split}
 m_n\bigl(B_\rho\cap\{u(t)>k'\}\bigr)
 &=m_n\Bigl((B_\rho\setminus B_{(1-\sigma)\rho})\cap\{u(t)>k'\}\Bigr) 
 +m_n\Bigl(B_{(1-\sigma)\rho}\cap\{u(t)>k'\}\Bigr) \\
 &\leq(1-(1-\sigma)^n)m_n\bigl(B_\rho\bigr) 
 +m_n\Bigl(B_{(1-\sigma)\rho}\cap\{u(t)>k'\}\Bigr),
\end{split} 
\]
 we have
 \begin{multline*}
  m_n\Bigl(B_{(1-\sigma)\rho}\cap\{u(t)>k'\}\Bigr) \\
  \leq\Biggl\{\frac{1-\theta_0}{1-\frac{\theta_0}2}
  \biggl(\frac{r_0-1}{r_0-3}\biggr)^2
  +\frac{C_1(m)}{\sigma^2}\frac{r_0-1}{(r_0-3)^2} 
  +(1-(1-\sigma)^n)  \\
  +\max\{C_2,C_3,C_4\}
  \frac{\rho^{2\sigma_0}}{\omega^2}
  M^{\frac2q(1-\frac1m)}h(\rho,M,\omega)
  C_5(r_0)
  \Biggr\}m_n\bigl(B_\rho\bigr),
 \end{multline*}
 where 
 \[
  C_5(r_0)=\max\biggl\{\frac{2^{r_0}(r_0-1)}{(r_0-3)^2},
 \frac{2^{2r_0}(1+2(r_0-1)\log2)}{(r_0-3)^2}\biggr\}.
 \]
 We choose parameters $r_0,\sigma$ and $\delta_2$. First we choose
 $\sigma=\sigma(n,\theta_0)$ satisfying
 $1-(1-\sigma)^n\leq\frac18\theta^2_0$. Second, we choose
 $r_0=r_0(n,m,\theta_0)$ satisfying
 \[
 \biggl(\frac{r_0-1}{r_0-3}\biggr)^2\leq\biggl(1-\frac{\theta_0}2\biggr)(1+\theta_0)\ \,
 \text{and}\ \,
 \frac{C_1(m)}{\sigma^2}\frac{r_0-1}{(r_0-3)^2}\leq\frac18\theta_0^2.
 \]
 Finally, we choose $\delta_2=\delta_2(n,m,p,\theta_0)>0$
 sufficiently small such that
 \[
 \max\{C_2,C_3,C_4\}
 C_5(r_0)
 \delta_2\leq\frac12\theta_0^2.
 \]
 Then, if $\rho^{2\sigma_0}
 \leq\delta_2\omega^2M^{-\frac2q(1-\frac1m)}h(\rho,M,\omega)^{-1}$, we have
 \[
 m_n\Bigl(B_\rho\cap\{u(t)>k'\}\Bigr)
 \leq\biggl(1-\biggl(\frac{\theta_0}{2}\biggr)^2\biggr)
 m_n\bigl(B_\rho\bigr).
 \]
\end{proof}

In the proof of Proposition \ref{prop:2}, we need to show the
Caccioppoli estimate.

\begin{lemma}[the Caccioppoli estimate for super level sets]
\label{lem:6}
 Let $\eta=\eta(t,x)$ be a cut-off function in
 $Q^{\theta_0}_{\rho,M}$. For $k\geq\mu^+-\frac\omega2$, there exists
 a constant $C>0$ depending only on $m$ such that
 \begin{multline}
  \label{eq:18}
  \sup_{t\in I^{\theta_0}_{\rho,M}}
  \int_{B_\rho}(u(t)-k)_+^2\eta^2(t)\,dx
  +M^{1-\frac1m}\iint_{Q^{\theta_0}_{\rho,M}}
  |\nabla (u-k)_+|^2\eta^2\,dtdx \\
  \leq
  C\biggl\{
  \Bigl(\frac{M}{\mu^+}\Bigr)^{1-\frac1m}
  \iint_{Q^{\theta_0}_{\rho,M}}(u-k)_+^2
  \partial_t\eta^2\,dtdx 
  +M^{1-\frac1m}\iint_{Q^{\theta_0}_{\rho,M}}(u-k)_+^2
  |\nabla\eta|^2\,dtdx \\
  +M^{1-\frac1m}h(\rho,M,\omega)
  \biggl(\int_{t\in I^{\theta_0}_{\rho,M}}
  m_n\Bigl(B_\rho\cap\{u(t)>k\}\Bigr)^{q'(\frac12-\frac1p)}\,dt
  \biggr)^{\frac{2}{q'}} \biggr\},
 \end{multline}
 where $\frac12=\frac1q+\frac1{q'}$.
\end{lemma}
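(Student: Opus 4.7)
The plan is to mimic the proof of Lemma \ref{lem:3}, now testing equation \eqref{eq:3} against $(u-k)_+\eta^2$ over the cylinder $Q^{\theta_0}_{\rho,M}$, with sub- and super-level sets interchanged. The essential new ingredient is that on $\{u>k\}$ the solution stays close to $\mu^+$: from $\mu^-\geq 0$ and \eqref{eq:5} we have $\omega\leq\frac{4}{3}\mu^+$, so $k\geq\mu^+-\frac{\omega}{2}\geq\frac{1}{3}\mu^+$, and $\mu^+$ is comparable to $M$ by \eqref{eq:4}. This nondegeneracy is what drives the proof.

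First I would handle the parabolic term. Introducing
\[
G(u) := \frac{1}{m}\int_0^{(u-k)_+}(k+\xi)^{\frac{1}{m}-1}\xi\,d\xi,
\]
one checks $\partial_\tau u^{1/m}(u-k)_+ = \partial_\tau G(u)$, so integration by parts in time (exploiting that $\eta$ vanishes at the bottom of the cylinder) converts the parabolic contribution to $\int_{B_\rho} G(u(t))\eta^2(t)\,dx - \iint G(u)\partial_\tau\eta^2\,d\tau dx$. Because $k+\xi\in[k,\mu^+]$ throughout and the map $s\mapsto s^{1/m-1}$ is decreasing (as $m>1$), the comparability $k\sim\mu^+$ yields the two-sided bound
\[
c_1(m)(\mu^+)^{\frac{1}{m}-1}(u-k)_+^2 \leq G(u) \leq c_2(m)(\mu^+)^{\frac{1}{m}-1}(u-k)_+^2,
\]
with both sides scaling like $M^{1/m-1}(u-k)_+^2$.

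Next I would dispose of the diffusion and forcing terms as in Lemma \ref{lem:3}: integration by parts in space gives the principal term $\iint|\nabla(u-k)_+|^2\eta^2$ together with a cross term that Young's inequality sends to the right as $\iint(u-k)_+^2|\nabla\eta|^2$; the forcing terms $f\cdot\nabla((u-k)_+\eta^2)$ and $g(u-k)_+\eta^2$ are expanded by the product rule and split by Young's inequality, with the gradient piece absorbed on the left. The remaining spatial integrals, now restricted to $\{u(t)>k\}$, are controlled by the weak $L^p$ definition and H\"older's inequality in time (with $\frac{1}{2}=\frac{1}{q}+\frac{1}{q'}$), producing precisely the factor
\[
h(\rho,M,\omega)\Bigl(\int_{I^{\theta_0}_{\rho,M}} m_n\bigl(B_\rho\cap\{u(t)>k\}\bigr)^{q'(\frac{1}{2}-\frac{1}{p})}\,dt\Bigr)^{\frac{2}{q'}}.
\]
Finally, multiplying through by the positive quantity $M^{1-\frac{1}{m}}$ yields the stated form: on the left the $\sup_t$ term matches via the lower bound on $G$, and the gradient term acquires its explicit $M^{1-\frac{1}{m}}$ prefactor; on the right, the time-derivative term inherits $(M/\mu^+)^{1-\frac{1}{m}}$ from the upper bound on $G$, which is bounded by an absolute constant thanks to \eqref{eq:4}.

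The main technical subtlety, as in Lemma \ref{lem:3}, lies in handling the nonlinear weight $(k+\xi)^{1/m-1}$. Here, however, the weight is nondegenerate --- bounded above and below by comparable multiples of $(\mu^+)^{1/m-1}$ --- so, unlike in the sub-level case, no case split on whether $\mu^-$ is large or small relative to $\mu^+$ is needed. This is precisely the simplification afforded by the hypothesis $k\geq\mu^+-\omega/2$.
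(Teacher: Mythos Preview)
Your proposal is correct and matches the paper's proof essentially line for line: test with $(u-k)_+\eta^2$, rewrite the parabolic term via $G(u)=\frac{1}{m}\int_0^{(u-k)_+}(k+\xi)^{\frac1m-1}\xi\,d\xi$, use $(k+\xi)^{\frac1m-1}\in[(\mu^+)^{\frac1m-1},(\frac13\mu^+)^{\frac1m-1}]$ (the lower endpoint coming from $k\ge\mu^+-\frac\omega2\ge\frac13\mu^+$ via \eqref{eq:5}), absorb the cross and forcing terms by Young, bound the remaining $f,g$ integrals exactly as in Lemma~\ref{lem:3} via the weak-$L^p$ definition and H\"older in time, and finally multiply by $M^{1-\frac1m}$. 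Your closing remark that no $\mu^-$ case split is needed here --- in contrast to Lemma~\ref{lem:3} --- is also the paper's implicit point.
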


\begin{proof}%
 Testing a function $(u-k)_+\eta^2$ to \eqref{eq:3}, we have
 \begin{equation}
  \label{eq:25}
   \begin{split}
    &\quad\frac1m\sup_{t\in I^{\theta_0}_{\rho,M}}
    \int_{B_\rho}
    \biggl(\int_0^{(u(t)-k)_+}(k+\xi)^{\frac1m-1}\xi\,d\xi\biggr)\eta^2(t)\,dx 
    +\iint_{Q^{\theta_0}_{\rho,M}}
    |\nabla(u-k)_+|^2\eta^2\,dtdx \\
    &\leq \frac1m\iint_{Q^{\theta_0}_{\rho,M}}
    \biggl(\int_0^{(u-k)_+}(k+\xi)^{\frac1m-1}\xi\,d\xi\biggr)
    \partial_t\eta^2\,dtdx 
    -\iint_{Q^{\theta_0}_{\rho,M}}
    (\nabla(u-k)_+\cdot\nabla\eta^2)(u-k)_+\,dtdx \\
    &\quad+\iint_{Q^{\theta_0}_{\rho,M}}
    {f}\cdot\nabla(u-k)_+\eta^2\,dtdx
    +\iint_{Q^{\theta_0}_{\rho,M}}
    ({f}\cdot\nabla\eta^2)(u-k)_+\,dtdx \\
    &\quad+\iint_{Q^{\theta_0}_{\rho,M}}
    g(u-k)_+\eta^2\,dtdx \\
    &=:I_1+I_2+I_3+I_4+I_5.
   \end{split} 
 \end{equation}
 By the Young inequality and since $k>\mu^+-\frac\omega2$, we have
\begin{equation}
 \label{eq:26}
 \begin{split}
  I_2&\leq\frac12\iint_{Q^{\theta_0}_{\rho,M}}|\nabla(u-k)_+|^2\eta^2\,dtdx
  +2\iint_{Q^{\theta_0}_{\rho,M}}(u-k)_+^2|\nabla\eta|^2\,dtdx,\\
  I_3&\leq\frac14\iint_{Q^{\theta_0}_{\rho,M}}|\nabla(u-k)_+|^2\eta^2\,dtdx
  +\iint_{Q^{\theta_0}_{\rho,M}\cap\{u>k\}}|{f}|^2\eta^2\,dtdx, \\
  I_4&\leq\iint_{Q^{\theta_0}_{\rho,M}}(u-k)_+^2|\nabla\eta|^2\,dtdx
  +\iint_{Q^{\theta_0}_{\rho,M}\cap\{u>k\}}|{f}|^2\eta^2\,dtdx, \\
  I_5&\leq\frac\omega2
  \iint_{Q^{\theta_0}_{\rho,M}\cap\{u>k\}}|g|\eta^2\,dtdx. \\
 \end{split}
\end{equation}
 We estimate the first term of the left-hand side in
 \eqref{eq:25}. Since
 \[
 (k+\xi)^{\frac1m-1}
 \geq  u^{\frac1m-1}
 \geq (\mu^+)^{\frac1m-1}
 \geq M^{\frac1m-1}\quad\text{for}\quad 0\leq\xi\leq(u-k)_+,
 \]
 we have
 \begin{equation}
  \label{eq:27}
   \int_0^{(u(t)-k)_+}(k+\xi)^{\frac1m-1}\xi\,d\xi
   \geq\frac12M^{\frac1m-1}(u(t)-k)_+^2.
 \end{equation}
 Finally, we estimate $I_1$. By \eqref{eq:5}, 
 we have
 \[
 (k+\xi)^{\frac1m-1}
 \leq k^{\frac1m-1}
 \leq \Bigl(\mu^+-\frac\omega2\Bigr)^{\frac1m-1}
 \leq \Bigl(\frac13\mu^+\Bigr)^{\frac1m-1}
 \]
 and hence
 \begin{equation}
  \label{eq:28}
   I_1\leq\frac12\Bigl(\frac13\mu^+\Bigr)^{\frac1m-1}
   \iint_{Q_{\rho,M}^{\theta_0}}
   (u-k)_+^2\partial_t\eta^2\,dtdx.
 \end{equation}
 Combining estimates
 \eqref{eq:26},~\eqref{eq:27} and \eqref{eq:28}, we obtain
 \begin{multline}
  \label{eq:44}
  M^{\frac1m-1}
  \sup_{t\in I^{\theta_0}_{\rho,M}}
  \int_{B_\rho}
  (u(t)-k)_+^2\eta^2(t)\,dx
  +\iint_{Q^{\theta_0}_{\rho,\omega}}
  |\nabla(u-k)_+|^2\eta^2\,dtdx \\
  \leq C(m)\biggl\{(\mu^+)^{\frac1m-1}
  \iint_{Q^{\theta_0}_{\rho,M}}
  (u-k)_+^2\partial_t\eta^2\,dtdx
  +\iint_{Q^{\theta_0}_{\rho,M}}
  (u-k)_+^2|\nabla\eta|^2\,dtdx \\
  +\iint_{Q^{\theta_0}_{\rho,M}\cap\{u>k\}}
  |{f}|^2\eta^2\,dtdx
  +\frac\omega2
  \iint_{Q^{\theta_0}_{\rho,M}\cap\{u>k\}}|g|\eta^2\,dtdx
  \biggr\}. 
 \end{multline}
 Using the same argument of the proof of Lemma \ref{lem:3}, we have
 \begin{equation*}
  \iint_{Q^{\theta_0}_{\rho,M}\cap\{u>k\}}
   |{f}|^2\eta^2\,dtdx 
  \leq\big\||{f}|^2\big\|_{L^\frac{q}{2} L^\frac{p}{2}_\mathrm{w}(Q^{\theta_0}_{\rho,M})}
  \Biggl(
  \int_{I^{\theta_0}_{\rho,M}}
  m_n\Bigl(B_\rho\cap\{u(t)>k\}\Bigr)^{q'(\frac12-\frac1p)}\,dt
  \Biggr)^{\frac2{q'}}, 
 \end{equation*} 
 and
\begin{equation*}
   \iint_{Q^{\theta_0}_{\rho,M}\cap\{u>k\}}
  |g|\eta^2\,dtdx 
   \leq\|g\|_{L^\frac{q}{2} L^\frac{p}{2}_\mathrm{w}(Q^{\theta_0}_{\rho,M})}
  \Biggl(
  \int_{I^{\theta_0}_{\rho,M}}
  m_n\Bigl(B_\rho\cap\{u(t)>k\}\Bigr)^{q'(\frac12-\frac1p)}\,dt
 \Biggr)^{\frac2{q'}},
\end{equation*} 
 hence we obtain \eqref{eq:18} from \eqref{eq:44}.
\end{proof}

Using Bernstein type estimates, the Caccioppoli estimate and the hole
filling argument, we may prove the smallness of measures of super
level sets.

\begin{lemma}
 \label{lem:7}
 Let $\rho_0=\frac34\rho$. For $0<\nu<1$, there exist
 $q_0\,,\,\delta_1>0$ depending only on
 $n,m,p,q,\theta_0$ and $\nu$ such that
 \[
 m_{n+1}\bigg(
 Q_{\rho_0,M}^{\theta_0}\cap
 \Bigl\{u>\mu^+-\frac{\omega}{2^{q_0+1}}\Bigr\}
 \bigg)
 \leq\nu
 m_{n+1}\bigl(Q_{\rho_0,M}^{\theta_0}\bigr)
 \]
 provided $\rho^{\sigma_0}\leq
 \delta_2\omega{M^{-\frac1q(1-\frac1m)}}h(\rho,M,\omega)$.
\end{lemma}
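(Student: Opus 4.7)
The plan is to prove Lemma \ref{lem:7} by a De Giorgi type level-set shrinking argument, iterating the super-level Caccioppoli estimate of Lemma \ref{lem:6} against the pointwise-in-time Bernstein bound of Lemma \ref{lem:5}. For $j = r_0, r_0+1, \ldots$, I would introduce the levels $k_j := \mu^+ - \omega/2^{j+1}$, noting that each $k_j \geq \mu^+ - \omega/2^{r_0+1}$, so Lemma \ref{lem:5} (applied with $B_\rho$ replaced by $B_{\rho_0}$, which only improves constants via a routine covering) yields the slice bound
\[
m_n\bigl(B_{\rho_0} \cap \{u(t) \leq k_j\}\bigr) \geq \alpha_0 \, m_n(B_{\rho_0}), \qquad t \in I^{\theta_0}_{\rho_0,M},
\]
with $\alpha_0 = \alpha_0(n, \theta_0) > 0$, provided $\delta_1 \leq \delta_2$.

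Next, I would apply the standard De Giorgi isoperimetric inequality on each time slice, with low level $k_j$ and high level $k_{j+1}$: since $k_{j+1} - k_j = \omega/2^{j+2}$ and the set $\{u(t) \leq k_j\}$ has comparable measure to $B_{\rho_0}$ uniformly in $t$,
\[
\frac{\omega}{2^{j+2}} \, m_n\bigl(B_{\rho_0} \cap \{u(t) > k_{j+1}\}\bigr)
\leq \frac{C(n) \rho_0}{\alpha_0} \int_{B_{\rho_0} \cap \{k_j < u(t) < k_{j+1}\}} |\nabla u| \, dx.
\]
Squaring, applying Cauchy--Schwarz, and integrating over $t \in I^{\theta_0}_{\rho_0,M}$ gives
\[
\frac{\omega^2}{4^{j+2}} \bigl[m_{n+1}(A_{j+1})\bigr]^2
\leq C \rho_0^2 \, m_{n+1}(A_j^*) \iint_{Q^{\theta_0}_{\rho_0,M}} |\nabla (u-k_j)_+|^2 \, dt\,dx,
\]
where $A_j = Q^{\theta_0}_{\rho_0,M} \cap \{u > k_j\}$ and $A_j^* = Q^{\theta_0}_{\rho_0,M} \cap \{k_j < u < k_{j+1}\}$.

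The gradient integral on the right is then controlled by the Caccioppoli estimate (Lemma \ref{lem:6}) with a cut-off $\eta$ equal to $1$ on $Q^{\theta_0}_{\rho_0,M}$, supported in $Q^{\theta_0}_{\rho,M}$ (so $|\nabla \eta| \lesssim 1/\rho$, $\partial_t \eta^2 \lesssim M^{1-1/m}/\rho^2$). Since $k_j \geq \mu^+ - \omega/2$ and $(u-k_j)_+ \leq \omega/2^{j+1} \leq \omega$, the first two Caccioppoli terms are bounded by $C \omega^2 \rho^{-2} m_{n+1}(Q^{\theta_0}_{\rho_0,M})$ using $M/\mu^+ \leq 3$ (from \eqref{eq:4}--\eqref{eq:5}), while the forcing term is absorbed by the hypothesis $\rho^{\sigma_0} \leq \delta_1 \omega M^{-\frac{1}{q}(1-\frac{1}{m})} h(\rho,M,\omega)^{-1/2}$, exactly as in the proof of Lemma \ref{lem:5}. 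Combining, one obtains
\[
\bigl[m_{n+1}(A_{j+1})\bigr]^2 \leq C \, 4^{j} \cdot \rho_0^2 \cdot M^{-(1-1/m)} \cdot \omega^{-2} \cdot \omega^2 M^{1/m-1} \rho_0^{-2} m_{n+1}(Q^{\theta_0}_{\rho_0,M}) \cdot m_{n+1}(A_j^*),
\]
which after cleaning simplifies to
$\bigl[m_{n+1}(A_{j+1})\bigr]^2 \leq C^* \, m_{n+1}(Q^{\theta_0}_{\rho_0,M}) \, m_{n+1}(A_j^*)$,
with $C^*$ independent of $j$ once the $4^j$ factor is cancelled by the $\omega/2^{j+2}$ on the left (squared).

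Summing over $j = r_0, \ldots, q_0$ and using $\sum_j m_{n+1}(A_j^*) \leq m_{n+1}(Q^{\theta_0}_{\rho_0,M})$, together with the monotonicity $m_{n+1}(A_{q_0+1}) \leq m_{n+1}(A_{j+1})$ for all such $j$, yields
\[
(q_0 - r_0)\bigl[m_{n+1}(A_{q_0+1})\bigr]^2 \leq C^* \bigl[m_{n+1}(Q^{\theta_0}_{\rho_0,M})\bigr]^2.
\]
Choosing $q_0 = q_0(n,m,p,q,\theta_0,\nu)$ so that $C^*/(q_0 - r_0) \leq \nu^2$ gives the claim, with $\delta_1 = \min\{\delta_2, \delta_*\}$ where $\delta_*$ is the threshold needed to absorb the forcing in the Caccioppoli term. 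The main obstacle is the careful bookkeeping that isolates the $4^j$ (from the level gap $\omega/2^{j+2}$) against the $\omega^2$ and $\rho^{-2}$ factors in Lemma \ref{lem:6} so that the resulting constant is uniform in $j$; once this is done the telescoping is classical. A secondary delicacy is that Lemma \ref{lem:5} is stated on $B_\rho$ rather than $B_{\rho_0}$, which requires either a simple covering argument or a direct reproof on the slightly smaller ball, both of which are routine.
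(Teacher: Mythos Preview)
Your approach is essentially the paper's: apply the De Giorgi isoperimetric inequality (Proposition~\ref{prop:3}) on each time slice, use Lemma~\ref{lem:5} to bound the sub-level measure from below, integrate in $t$, apply Cauchy--Schwarz, control the resulting $L^2$ gradient norm by the Caccioppoli estimate of Lemma~\ref{lem:6}, and telescope over $j=r_0,\dots,q_0$ to choose $q_0$ with $C^*/(q_0-r_0)\le\nu^2$. The treatment of the forcing (absorbing it via $\delta_1$ depending on $q_0$) and your observation about transferring Lemma~\ref{lem:5} from $B_\rho$ to $B_{\rho_0}$ are both in line with the paper.

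There is one genuine bookkeeping slip. In bounding the Caccioppoli right-hand side you invoke only the weak bound $(u-k_j)_+\le\omega$, obtaining $C\omega^2\rho^{-2}m_{n+1}(Q^{\theta_0}_{\rho_0,M})$. After dividing through by $(\omega/2^{j+2})^2$ this leaves a factor $4^j$ on the right, and your proposed remedy---cancelling it against ``the $\omega/2^{j+2}$ on the left (squared)''---is circular: that left-hand factor is precisely what produced the $4^j$. The correct mechanism, which the paper uses, is the \emph{sharp} bound $(u-k_j)_+\le\omega/2^{j+1}$ inside the Caccioppoli estimate, so that the non-forcing terms are bounded by $C(\omega/2^{j+1})^2\rho^{-2}m_{n+1}(Q^{\theta_0}_{\rho_0,M})$; the extra $4^{-j}$ here is what renders $C^*$ uniform in $j$. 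With this correction (and the analogous sharp bound when estimating the forcing term, which is why $\delta_1$ must scale like $2^{-q_0}$), your argument goes through verbatim.
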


\begin{remark}
 We obtain the estimate of $\delta_1$ as
 \[
  \delta_1\leq\theta_0^{\frac1q-\frac12}2^{-q_0}.
 \]
\end{remark}

\begin{proof}%
 [Proof of Lemma \ref{lem:7}]
 We fix $t\in I_{\rho,M}^{\theta_0}$ and set
 \[
 l:=\mu^+-\frac{\omega}{2^{j+1}},\quad
 k:=\mu^+-\frac{\omega}{2^j},
 \]
 where $j\geq r_0$ and the constant $r_0$ is given by Lemma
 \ref{lem:5}. By the Poincar\'e type inequality
 (cf. Proposition \ref{prop:3}), we
 have
 \begin{equation*}
  \frac{\omega}{2^{j+1}}m_n\bigl(B_{\rho_0}\cap\{u(t)>l\}\bigr) 
  \leq\frac{C(n)\rho_0^{n+1}}%
   {m_n\bigl(B_{\rho_0}\cap\{u(t)\leq k\}\bigr)}
   \int_{B_{\rho_0}\cap\{k<u(t)\leq l\}}|\nabla u(t)|\,dx.
 \end{equation*}
 Since $k>\mu^+-\frac{\omega}{2^{r_0}}$ and Lemma \ref{lem:5}, we have
 \[
  m_n\bigl(B_{\rho_0}\cap\{u(t)\leq k\}\bigr)
 =m_n\bigl(B_{\rho_0}\bigr)
 -m_n\bigl(B_{\rho_0}\cap\{u(t)>k\}\bigr) 
 \geq\biggl(\frac{\theta_0}{2}\biggr)^2
 m_n\bigl(B_{\rho_0}\bigr)
 \]
 and hence
 \begin{equation}
  \label{eq:29}
   \frac{\omega}{2^{j+1}}m_n\bigl(B_{\rho_0}\cap\{u(t)>l\}\bigr)
   \leq\frac{C(n)\rho_0}{\theta_0^2}
   \int_{B_{\rho_0}\cap\{k<u(t)\leq l\}}|\nabla u(t)|\,dx.
 \end{equation}
 Integrating over
 $I_{\rho_0,M}^{\theta_0}$ for
 \eqref{eq:29}, we obtain
 \[
  \begin{split}
   \frac{\omega}{2^{j+1}}
   m_{n+1}\bigl(Q^{\theta_0}_{\rho_0,M}\cap\{u>l\}\bigr)
   &\leq\frac{C(n)\rho_0}{\theta_0^2}
   \int_{I_{\rho_0,M}^{\theta_0}}
   \int_{B_{\rho_0}\cap\{k<u(t)\leq l\}}|\nabla u(t)|\,dtdx \\
   &\leq\frac{C(n)\rho_0}{\theta_0^2}
   \|\nabla(u-k)_+\|_{L^2(Q^{\theta_0}_{\rho_0,M})} 
   m_{n+1}\bigl(Q^{\theta_0}_{\rho_0,M}\cap\{k<u\leq l\}\bigr)^\frac12.
  \end{split}
 \]
 We estimate $\|\nabla(u-k)_+\|_{L^2(Q^{\theta_0}_{\rho_0,M})}$.
 Let $\eta=\eta(t,x)$ be a cut-off function in $Q^{\theta_0}_{\rho,M}$
 satisfying 
 \[
 \eta\equiv1\ \text{on}\ Q_{\rho_0,M}^{\theta_0},\quad
 |\nabla\eta|\leq\frac{8}{\rho}\quad\text{and}\quad
 \partial_t\eta\leq\frac{10M^{1-\frac1m}}{\theta_0\rho^2}.
 \]
 Then, by
 the Caccioppoli estimate (Lemma \ref{lem:6}), we have
 \begin{equation}
  \label{eq:30}
   \begin{split}
    \|\nabla(u-k)_+\|^2_{L^2(Q^{\theta_0}_{\rho_0,M})} 
    &\leq\|\nabla(u-k)_+\eta\|^2_{L^2(Q^{\theta_0}_{\rho,M})} \\
    &\leq C(m) \Biggl\{
    \iint_{Q^{\theta_0}_{\rho,M}}
    (u-k)_+^2(|\nabla\eta|^2+(\mu^+)^{\frac1m-1}\partial_t\eta^2)
    \,dtdx \\
    &\quad+h(\rho,M,\omega)\biggl(
    \int_{I_{\rho,M}^{\theta_0}}
    m_n\bigl(B_\rho\cap\{u(t)>k\}\bigr)^{q'(\frac12-\frac1p)}\,dt
    \biggr)^\frac{2}{q'}
    \Biggr\} \\
    &=:I_1+I_2.
   \end{split} 
 \end{equation}
 First we estimate $I_1$. By the inequality \eqref{eq:4}, we have
 \begin{equation}
  \label{eq:32} 
  \begin{split}
   I_1
   &\leq C(m)(\mu^+-k)_+^2
   \biggl(\frac1{\rho^2}
   +\frac{M^{1-\frac1m}}{\theta_0\rho^2}(\mu^+)^{\frac1m-1}
   \biggr)
   m_{n+1}\bigl(Q^{\theta_0}_{\rho_0,M}\bigr)  \\
   &\leq C(m)\biggl(\frac\omega{2^j}\biggr)^2
   \frac{1}{\theta_0\rho^2}
   \biggl(\frac{M}{\mu^+}\biggr)^{1-\frac1m}
   m_{n+1}\bigl(Q^{\theta_0}_{\rho_0,M}\bigr) \\
   &\leq C(m)\biggl(\frac\omega{2^j}\biggr)^2
   \frac{1}{\theta_0\rho^2}
   m_{n+1}\bigl(Q^{\theta_0}_{\rho_0,M}\bigr).
  \end{split}  
\end{equation} 

 We estimate $I_2$. Since
 \begin{equation*}
  \begin{split}
   &\quad\biggl(\int_{I_{\rho,M}^{\theta_0}}
   m_n\bigl(B_\rho\cap\{u(t)>k\}\bigr)^{q'(\frac12-\frac1p)}
   \,dt\biggr)^\frac{2}{q'} \\
   &\leq m_n\bigl(B_\rho\bigr)^{1-\frac2p}
   \Bigl(\frac{\theta_0}2\frac{\rho^2}{M^{1-\frac1m}}\Bigr)^\frac{2}{q'} \\
   &\leq C(q)m_n\bigl(B_\rho\bigr)^{-\frac2p}
   \Bigl(\frac{\theta_0}2\frac{\rho^2}{M^{1-\frac1m}}\Bigr)^{\frac{2}{q'}-1} 
   m_{n+1}\bigl(Q^{\theta_0}_{\rho_0,M}\bigr)
  \\
   &\leq C(q)m_n\bigl(B_\rho\bigr)^{-\frac2p}
  \Bigl(\frac{\theta_0}2\frac{\rho^2}{M^{1-\frac1m}}
   \Bigr)^{\frac{2}{q'}-1}
   m_{n+1}\bigl(Q^{\theta_0}_{\rho_0,M}\bigr) \\
  &\leq C(n,p,q)
  \biggl(\rho^{2\sigma_0}M^{\frac2q(1-\frac1m)}
  \biggl(\frac{2^j}{\omega}\biggr)^2\theta_0^\frac2{q'}\biggr)
  \frac1{\theta_0\rho^2}\biggl(\frac{\omega}{2^j}\biggr)^2
   m_{n+1}\bigl(Q^{\theta_0}_{\rho_0,M}\bigr),
  \end{split} 
 \end{equation*} 
 we obtain
\begin{equation}
   \label{eq:31}
  I_2\leq C(n,m,p,q)
  \biggl(\rho^{2\sigma_0}M^{\frac2q(1-\frac1m)}
  \biggl(\frac{2^j}{\omega}\biggr)^2\theta_0^\frac2{q'}h(\rho,M,\omega)
  \biggr) 
  \times\frac1{\theta_0\rho^2}
  \biggl(\frac{\omega}{2^j}\biggr)^2
  m_{n+1}\bigl(Q^{\theta_0}_{\rho_0,M}\bigr).
\end{equation}
 Combining estimates \eqref{eq:30}, \eqref{eq:32} and \eqref{eq:31}, we
 obtain
\begin{multline*}
  \|\nabla(u-k)_+\|^2_{L^2(Q^{\theta_0}_{\rho_0,M})} \\
 \leq C(n,m,p,q)
 \Bigl(1+
 \rho^{2\sigma_0}M^{\frac2q(1-\frac1m)}
 \biggl(\frac{2^j}{\omega}\biggr)^2\theta_0^\frac2{q'}h(\rho,M,\omega)
 \Bigr) 
 \frac{1}{\theta_0\rho^2}
 \biggl(\frac{\omega}{2^j}\biggr)^2
 m_{n+1}\bigl(Q^{\theta_0}_{\rho_0,M}\bigr)
\end{multline*} 
 and hence
 \begin{multline*}
  \biggl(\frac{\omega}{2^{j+1}}\biggr)^2
  m_{n+1}\bigl(Q_{\rho_0,M}^{\theta_0}\cap\{u>l\}\bigr)^2 \\
  \leq\frac{C(n,m,p,q)}{\theta_0^5}\biggl(\frac{\omega}{2^j}\biggr)^2
  \Bigl(1+
  \rho^{2\sigma_0}M^{\frac2q(1-\frac1m)}
  \biggl(\frac{2^j}{\omega}\biggr)^2\theta_0^\frac2{q'}h(\rho,M,\omega)
  \Bigr) \\
  \times
  m_{n+1}\bigl(Q_{\rho_0,M}^{\theta_0}\bigr)\cdot
  m_{n+1}\bigl(Q_{\rho_0,M}^{\theta_0}\cap\{k<u\leq l\}\bigr).
 \end{multline*} 
 Summing over $i=r_0+1,\dots,q_0$, we have
 \begin{equation*}
\begin{split}
 &\quad\sum_{i=r_0+1}^{q_0}
  m_{n+1}\biggl(
  Q_{\rho_0,M}^{\theta_0}
  \cap\Bigl\{u>\mu^+-\frac{\omega}{2^{i+1}}\Bigr\}
  \biggr)^2 \\
  &\leq\frac{C(n,m,p,q)}{\theta_0^5}
  m_{n+1}\bigl(Q_{\rho_0,M}^{\theta_0}\bigr) \\
 &\quad\times\sum_{i=r_0+1}^{q_0}
 \Bigl(1+
 \rho^{2\sigma_0}M^{\frac2q(1-\frac1m)}
 \biggl(\frac{2^j}{\omega}\biggr)^2\theta_0^\frac2{q'}h(\rho,M,\omega)
 \Bigr) 
 m_{n+1}\biggl(Q_{\rho_0,M}^{\theta_0}
  \cap\Bigl\{\mu^+-\frac{\omega}{2^i}<u
  \leq\mu^+-\frac{\omega}{2^{i+1}}\Bigr\}
  \biggr) \\
  &\leq\frac{C(n,m,p,q)}{\theta_0^5}
  m_{n+1}\bigl(Q_{\rho_0,M}^{\theta_0}\bigr) 
  \Bigl(1+
  \rho^{2\sigma_0}M^{\frac2q(1-\frac1m)}
  \biggl(\frac{2^{q_0}}{\omega}\biggr)^2\theta_0^\frac2{q'}h(\rho,M,\omega)
  \Bigr) \\
  &\quad\times\sum_{i=r_0+1}^\infty
  m_{n+1}\biggl(Q_{\rho_0,M}^{\theta_0}
  \cap\Bigl\{\mu^+-\frac{\omega}{2^i}<u 
  \leq\mu^+-\frac{\omega}{2^{i+1}}\Bigr\}
  \biggr) \\
 &\leq \frac{C(n,m,p,q)}{\theta_0^5} 
  m_{n+1}\bigl(Q_{\rho_0,M}^{\theta_0}\bigr)^2 
  \Bigl(1+
  \rho^{2\sigma_0}M^{\frac2q(1-\frac1m)}
  \biggl(\frac{2^{q_0}}{\omega}\biggr)^2\theta_0^\frac2{q'}h(\rho,M,\omega)
  \Bigr).
\end{split} 
 \end{equation*}
 We take $q_0>0$ enough large such that
 \[
 \frac{2C({n,m,p,q})}{\theta_0^5(q_0-r_0)}\leq\nu^2.
 \]
 Since
\begin{equation*}
   \sum_{i=r_0+1}^{q_0}
  m_{n+1}\biggl(Q_{\rho_0,M}^{\theta_0}
  \cap\Bigl\{u>\mu^+-\frac{\omega}{2^{i+1}}\Bigr\}
  \biggr)^2 
  \geq(q_0-r_0)
  m_{n+1}
  \biggl(
  Q_{\rho_0,M}^{\theta_0}\cap\Bigl\{u>\mu^+-\frac{\omega}{2^{q_0+1}}\Bigr\}
 \biggr)^2, 
\end{equation*}
 we have
 \[
 \begin{split}
  m_{n+1}\biggl(Q_{\rho_0,M}^{\theta_0}
  \cap\Bigl\{u>\mu^+-\frac{\omega}{2^{q_0+1}}\Bigr\}
  \biggr)^2 
  &\leq\frac{2C({n,m,p,q})}{\theta_0^5(q_0-r_0)}
  m_{n+1}\bigl(Q_{\rho_0,M}^{\theta_0}\bigr)^2 \\
  &\leq\nu^2
  m_{n+1}\bigl(Q_{\rho_0,M}^{\theta_0}\bigr)^2 
 \end{split} 
\]
 provided $\rho^{2\sigma_0}\leq
 \min\{\theta_0^{-\frac{2}{q'}}2^{-2q_0},\delta_2^2\}\omega^2
 M^{-\frac2q(1-\frac1m)}h(\rho,M,\omega)^{-1}$, where $\delta_2>0$ is given by
 Lemma \ref{lem:5}. Taking
 $\delta_1^2:=\min\{\theta_0^{-\frac{2}{q'}}2^{-2q_0},\delta_2^2\}$, we
 obtain Lemma \ref{lem:7}.
\end{proof}

\begin{proof}%
 [Proof of Proposition \ref{prop:2}] 
 Let $0<\nu<1$ be chosen later. We take $\delta_1>0$ and $q_0$ as in
 Lemma \ref{lem:7}.  We introduce the following scale transform
 \begin{equation*}
  \begin{aligned}
   s&=M^{1-\frac1m}t,&\ \,  
   \tilde{u}(s,x)&=u(t,x),&\ \,
   \tilde{\eta}(s,x)&=\eta(t,x),\\
   \tilde{{f}}(s,x)&={f}(t,x),&\ \,
   \tilde{g}(s,x)&=g(t,x).
  \end{aligned} 
\end{equation*}
Then, using \eqref{eq:4}, we may rewrite the Caccioppoli estimate
\eqref{eq:18} as follows:
 \begin{equation}
  \label{eq:33}
   \begin{split}
    &\quad\sup_{s\in I_\rho^{\theta_0}}\int_{B_\rho}
    (\tilde{u}(s)-k)_+^2\tilde{\eta}^2(s)\,dx
    +\iint_{Q_\rho^{\theta_0}}
    |\nabla(\tilde{u}-k)_+|^2\tilde{\eta}^2\,dsdx \\
    &\leq C(m)\Biggl\{
    \iint_{Q_\rho^{\theta_0}}(\tilde{u}-k)^2_+
    \biggl\{\Bigl(\frac{M}{\mu^+}\Bigr)^{1-\frac1m}\partial_s\tilde{\eta}^2
    +|\nabla\tilde{\eta}|^2\biggr\}
    \,dsdx \\
    &\quad+\tilde{h}(\rho,\omega)
    \biggl(\int_{I_\rho^{\theta_0}}
    m_n\bigl(B_\rho\cap\{\tilde{u}(s)>k\}\bigr)^{q'(\frac12-\frac1p)}
    \,ds\biggr)^{\frac2{q'}}
    \Biggr\} \\
    &\leq C(m)\Biggl\{
    \iint_{Q_\rho^{\theta_0}}(\tilde{u}-k)^2_+
    \biggl\{\partial_s\tilde{\eta}^2
    +|\nabla\tilde{\eta}|^2\biggr\}
    \,dsdx \\
    &\quad+\tilde{h}(\rho,\omega)
    \biggl(\int_{I_\rho^{\theta_0}}
    m_n\bigl(B_\rho\cap\{\tilde{u}(s)>k\}\bigr)^{q'(\frac12-\frac1p)}
    \,ds\biggr)^{\frac2{q'}}
    \Biggr\}
   \end{split} 
\end{equation}
 where $\tilde{h}(\rho,\omega):=
 \big\|\tilde{{f}}\big\|^2_{L^q(L^p_\mathrm{w})(Q_{\rho})}
 +\omega\|\tilde{g}\|_{L^\frac{q}2(L^{\frac{p}2}_\mathrm{w})(Q_{\rho})}$.

 We take $p_*,q_*>0$ as in the proof of Proposition \ref{prop:1} and for
 $i\in\N$ we take $\rho=\rho_i,\,k=k_i,\,\tilde{\eta}=\tilde{\eta_i}$
 satisfying $\tilde{\eta_i}\equiv1$ on $Q^{\theta_0}_{\rho_{i+1}}$ and
 \[
 \begin{aligned}
  k_i&=\mu^+-\frac\omega{2^{q_0}}+\frac{1}{2^{q_0+i+2}}\omega,\quad
  \rho_i=\frac12\rho+\frac1{2^{i+2}}\rho,  \\
  Y_i&:=\frac{m_{n+1}\bigl(Q^{\theta_0}_{\rho_i}\cap\{\tilde{u}>k_i\}\bigr)}%
  {m_{n+1}\bigl(Q^{\theta_0}_{\rho_0}\bigr)}, \\
  Z_i&=\frac{\rho_0^2}{m_{n+1}\bigl(Q^{\theta_0}_{\rho_0}\bigr)}
  \biggl(
  \int_{I_{\rho_i}^{\theta_0}}
  m_n\Bigl(B_{\rho_i}\cap\{\tilde{u}(s)>k_i\}\Bigr)^\frac{q_*}{p_*}\,ds
  \biggr)^\frac2{q_*}, \\
  |\nabla\tilde{\eta_i}|&\leq\frac{2}{\rho_i-\rho_{i+1}}
  \leq\frac{12\cdot2^i}{\rho_0}, \quad
  \partial_s\tilde{\eta_i}
  \leq\frac{4}{\theta_0}\frac{1}{\rho_i^2-\rho_{i+1}^2}
  \leq\frac{48\cdot2^{2i}}{\theta_0\rho^2}.
 \end{aligned}
 \]
 From \eqref{eq:33} and $(\tilde{u}-k_i)_+\leq\frac\omega{2^{q_0+1}}$, 
 we obtain
 \begin{equation*}
  \begin{split}
   &\quad\|(\tilde{u}-k_i)_+\tilde{\eta_i}\|_{L^\infty(L^2)\cap
   L^2(\dot{H}^1)(Q_{\rho_i}^{\theta_0})}^2 
   +\tilde{h}(\rho,\omega)
   \biggl(\int_{I_{\rho_i}^{\theta_0}}
   m_n\Bigl(B_\rho\cap\{\tilde{u}(s)>k_i\}\Bigr)^{q'(\frac12-\frac1p)}
   \,ds\biggr)^{\frac2{q'}}
   \\
   &\leq C(m)
   \Biggl\{
   \Bigl(\frac\omega{2^{q_0}}\Bigr)^2
   \Bigl(\frac1{\theta_0}+1\Bigr)
   \frac{2^{2i}}{\rho^2}m_{n+1}\Bigl(Q_{\rho_i}^{\theta_0}\cap\{\tilde{u}>k_i\}\Bigr) \\
   &\quad+\tilde{h}(\rho,\omega)
   \biggl(\int_{I_{\rho_i}^{\theta_0}}
   m_n\Bigl(B_\rho\cap\{\tilde{u}(s)>k_i\}
   \Bigr)^{\frac{q_*}{p_*}}\,ds\biggr)^{\frac{2}{q_*}(1+\frac{2\sigma_0}{n})}
   \Biggr\} \\
   &\leq C(m,\theta_0)
   \frac{m_{n+1}\bigl(Q_{\rho_0}^{\theta_0}\bigr)}{\rho_0^2}
   \Bigl(\frac\omega{2^{q_0}}\Bigr)^2 \\
   &\quad\times\Biggl\{
   2^{2i}Y_i+
   \tilde{h}(\rho,\omega)
   \Bigl(\frac{2^{q_0}}\omega\Bigr)^2
   \Bigl(\frac{m_{n+1}\bigl(Q_{\rho_0}^{\theta_0}\bigr)}{\rho_0^2}
   \Bigr)^\frac{2\sigma_0}{n}
   Z_i^{1+\frac{2\sigma_0}n}   
  \Biggr\}.
  \end{split} 
\end{equation*}
 Since $\delta_1\leq\theta_0^{-\frac1{q'}}2^{-q_0}$, we have
 \[
 \tilde{h}(\rho,\omega)
 \Bigl(\frac{2^{q_0}}\omega\Bigr)^2
 \Bigl(\frac{m_{n+1}\bigl(Q_{\rho_0}^{\theta_0}\bigr)}%
 {\rho_0^2}\Bigr)^\frac{2\sigma_0}{n}
 \leq C(n,p,q,\theta_0)
 \]
 and hence
 \begin{equation*}
  \|(\tilde{u}-k_i)_+\tilde{\eta_i}\|_{L^\infty(L^2)\cap
   L^2(\dot{H}^1)(Q_{\rho_i}^{\theta_0})}^2 
  \leq C(n,m,p,q,\theta_0)
   \Bigl(\frac\omega{2^{q_0}}\Bigr)^2
   \frac{m_{n+1}\bigl(Q_{\rho_0}^{\theta_0}\bigr)}{\rho_0}
   \Bigl\{
  2^{2i}Y_i+Z_i^{1+\frac{2\sigma_0}{n}}
  \Bigr\}.
 \end{equation*}
 By the Lady\v{z}enskaja inequality (cf. Proposition \ref{prop:4}) and
 the H\"older inequality, we have
 \begin{multline*}
  \|(\tilde{u}-k_i)_+\tilde{\eta_i}\|^2_{L^2(Q^{\theta_0}_{\rho_i})}
  \leq \|(\tilde{u}-k_i)_+\tilde{\eta_i}
  \|^2_{L^{2+\frac4n}(Q^{\theta_0}_{\rho_i})}
  \|\chi_{\{\tilde{u}>k_i\}}\|^2_{L^{n+2}(Q^{\theta_0}_{\rho_i})} \\
  \leq C(n,m,p,q,\theta_0)
  \Bigl(\frac\omega{2^{q_0}}\Bigr)^2
  \frac{m_{n+1}\bigl(Q_{\rho_0}^{\theta_0}
  \bigr)^{1+\frac{2}{n+2}}}{\rho_0^2}Y_i^{\frac2{n+2}}
  \Bigl\{
  2^{2i}Y_i+Z_i^{1+\frac{2\sigma_0}{n}}
  \Bigr\}
 \end{multline*}
 and
\begin{equation*}
    \|(\tilde{u}-k_i)_+\tilde{\eta_i}
  \|^2_{L^{q_*}(L^{p_*})(Q^{\theta_0}_{\rho_i})} 
  \leq C(n,m,p,q,\theta_0)
  \Bigl(\frac\omega{2^{q_0}}\Bigr)^2
  \frac{m_{n+1}\bigl(Q_{\rho_0}^{\theta_0}\bigr)}{\rho_0^2}
  \Bigl\{
  2^{2i}Y_i+Z_i^{1+\frac{2\sigma_0}{n}}
  \Bigr\}.
\end{equation*} 

 Since
 \[
 \begin{split}
  \|(\tilde{u}-k_i)_+\tilde{\eta_i}\|^2_{L^2(Q^{\theta_0}_{\rho_i})}
  &\geq\|(\tilde{u}-k_i)_+
  \|^2_{L^2(Q^{\theta_0}_{\rho_{i+1}}\cap\{\tilde{u}>k_{i+1}\})} \\
  &\geq (k_{i+1}-k_i)^2m_{n+1}
  \Bigl(Q^{\theta_0}_{\rho_{i+1}}\cap\{\tilde{u}>k_{i+1}\}\Bigr) \\
  &=\biggl(\frac{\omega}{2^{q_0+i+3}}\biggr)^2
  m_{n+1}\bigl(Q_{\rho_0}^{\theta_0}\bigr)Y_{i+1}
 \end{split} 
 \]
 and
 \[
 \begin{split}
  \|(\tilde{u}-k_i)_+\tilde{\eta_i}
  \|^2_{L^{q_*}(L^{p_*})(Q^{\theta_0}_{\rho_i})}
  &\geq\|(\tilde{u}-k_i)_+
  \|^2_{L^{q_*}L^{p_*}(Q^{\theta_0}_{\rho_{i+1}}\cap\{\tilde{u}>k_{i+1}\})} \\
  &\geq \biggl(\frac{\omega}{2^{q_0+i+3}}\biggr)^2
  \frac{m_{n+1}\bigl(Q_{\rho_0}^{\theta_0}\bigr)}{\rho_0^2}Z_{i+1},
 \end{split} 
 \]
 we obtain
 \[
 Y_{i+1}
 \leq C(n,m,p,q,\theta_0)
 \Bigl\{
 2^{4i}Y_i^{1+\frac2{n+2}}
 +2^{2i}Y_i^{\frac2{n+2}}Z_i^{1+\frac{2\sigma_0}{n}} \Bigr\} 
 \]
 and
 \[
 Z_{i+1}
 \leq C(n,m,p,q,\theta_0)
 \Bigl\{
 2^{4i}Y_i
 +2^{2i}Z_i^{1+\frac{2\sigma_0}{n}} \Bigr\}. 
 \]

Considering the same calculation of \eqref{eq:42} and \eqref{eq:43}, we
 obtain
 \[
  Z_0\leq
 \left\{
 \begin{aligned}
  &C(n,p,q,\theta_0)Y_0^\frac{2}{p_*}\quad\text{if}\quad q\geq p, \\
  &C(n,p,q,\theta_0)Y_0^\frac{2}{q_*}\quad\text{if}\quad q< p. \\
 \end{aligned}
 \right.
 \]
 Therefore, by Lemma \ref{lem:9},
 there exists
 $0<\nu=\nu(n,m,p,q,\theta_0)<1$ such that if
 $Y_0\leq \nu$, then
 $Y_i\rightarrow0$ as $i\rightarrow\infty$, i.e.
 \[
 \tilde{u}(s,x)<\mu^+-\frac\omega{2^{q_0+2}}
 \quad\text{a.a.}\ (s,x)\in Q^{\theta_0}_{\frac\rho2}.
 \]
 By Lemma \ref{lem:7}, we obtain the upper bounds of $u$.
\end{proof}


\appendix


\section{Appendix}
Their results are well-known, however we give the proof here for
reader's convenience.

\subsection{Some Sobolev type inequality}

\begin{proposition}
 [Lady{\v{z}}enskaja-Solonnikov-Ural'ceva {\cite[p.74]{MR0241822}}]
 \label{prop:4}
 Let $I\subset\R$ be an open interval and let $\Omega\subset\R^n$ be a
 domain. Then for $f\in L^\infty(I;L^2(\Omega))\cap L^2(I;H^1_0(\Omega))$
 and $p,q\geq2$ satisfying
\[
  \begin{aligned}
  &\frac2q+\frac{n}p=\frac{n}2&&\text{if}\quad n\neq2, \\
  &\frac2q+\frac{n}p=\frac{n}2\quad\text{without}\quad q=2,\ p=\infty
   &&\text{if}\quad  n=2,
 \end{aligned}
\]
we obtain
\begin{equation}
 \label{eq:34}
  \|f\|_{L^q(I;L^p(\Omega))}
  \leq C(n,p,q)
  (\|f\|_{L^\infty(I;L^2(\Omega))}+\|\nabla f\|_{L^2(I\times\Omega)}).
\end{equation}  
\end{proposition}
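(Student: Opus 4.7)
The plan is to derive the space--time estimate from the Gagliardo--Nirenberg inequality applied slicewise in $t$, and then integrate in time. The scaling hypothesis $\frac{2}{q}+\frac{n}{p}=\frac{n}{2}$ is precisely what pins the temporal exponent on $\|\nabla f(t)\|_{L^2}$ to equal $2$, so that the time integral collapses to the full parabolic $L^2$ norm of $\nabla f$.

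First I would fix $t\in I$ and apply the Gagliardo--Nirenberg inequality to $f(t)\in H^1_0(\Omega)$: setting $\theta:=\tfrac{n}{2}-\tfrac{n}{p}$, one has
\[
 \|f(t)\|_{L^p(\Omega)}\leq C(n,p)\,\|f(t)\|_{L^2(\Omega)}^{1-\theta}\,\|\nabla f(t)\|_{L^2(\Omega)}^{\theta}.
\]
The hypothesis is exactly $q\theta=2$. Raising to the $q$-th power, integrating over $I$, and pulling the $L^\infty_tL^2_x$ norm outside,
\[
 \int_I\|f(t)\|_{L^p(\Omega)}^q\,dt
 \leq C^q\,\|f\|_{L^\infty(I;L^2(\Omega))}^{q-2}\int_I\|\nabla f(t)\|_{L^2(\Omega)}^{2}\,dt
 = C^q\,\|f\|_{L^\infty(I;L^2(\Omega))}^{q-2}\,\|\nabla f\|_{L^2(I\times\Omega)}^{2}.
\]
Taking $q$-th roots yields the multiplicative estimate
\[
 \|f\|_{L^q(I;L^p(\Omega))}\leq C\,\|f\|_{L^\infty(I;L^2(\Omega))}^{1-2/q}\,\|\nabla f\|_{L^2(I\times\Omega)}^{2/q},
\]
and a single application of Young's inequality $a^{1-2/q}b^{2/q}\leq (1-\tfrac{2}{q})a+\tfrac{2}{q}b$ converts the product into the additive form \eqref{eq:34}.

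The only point requiring care is verifying the admissible range for the Gagliardo--Nirenberg interpolation: the constraint $\theta\in[0,1]$ forces $p\in[2,\tfrac{2n}{n-2}]$ when $n\geq 3$, $p\in[2,\infty]$ when $n=1$, and $p\in[2,\infty)$ when $n=2$. This is exactly the content of the dimension-dependent statement; the excluded pair $(q,p)=(2,\infty)$ in dimension two corresponds to $\theta=1$, i.e.\ to the failed embedding $H^1(\Omega)\hookrightarrow L^\infty(\Omega)$. No step is substantively difficult; the main content is the bookkeeping that identifies $q\theta=2$ with the scaling condition and thus bridges the slicewise and integrated inequalities.
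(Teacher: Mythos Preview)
Your proof is correct and follows essentially the same approach as the paper: apply the Gagliardo--Nirenberg inequality slicewise in $t$ and then take the $L^q(I)$ norm. Your write-up is simply more explicit about the bookkeeping (the identification $q\theta=2$, the Young step from multiplicative to additive form, and the admissible range of $p$), whereas the paper compresses all of this into two lines.
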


\begin{proof}%
 By the Gagliardo-Nirenberg-Sobolev inequality, we have
 \[
  \|f(t)\|_{L^p(\Omega)}
 \leq C(n,p)\|\nabla f(t)\|_{L^2(\Omega)}^{\frac{n}2-\frac{n}p}
 \|f(t)\|_{L^2(\Omega)}^{1-(\frac{n}2-\frac{n}p)}
 \quad\text{a.a.}\ t\in I.
 \]
 Taking $L^q(I)$ norm on both side, we obtain \eqref{eq:34}.
\end{proof}

\begin{proposition}%
  [Lady{\v{z}}enskaja-Solonnikov-Ural'ceva~{\cite[p.91]{MR0241822}}]
 \label{prop:3} Let $f$ be a non-negative function belonging to
 $W^{1,1}(B_\rho)$ and let $l>k$. Then there exists a constant $C>0$
 depending on $n$ only such that
  \[
   (l-k)m_n\bigl(\{f>l\}\bigr)
  \leq\frac{C\rho^{n+1}}{m_n\bigl(B_\rho\bigr)-m_n\bigl(\{f>k\}\bigr)}
  \int_{\{k<f\leq l\}}|\nabla f|\,dx.
  \]
 \end{proposition}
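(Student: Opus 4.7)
The plan is to reduce the claim to a Poincar\'e-type inequality for $W^{1,1}$ functions that vanish on a set of positive measure. I would introduce the truncation
\[
 w := \min\{(f-k)_+,\,l-k\},
\]
which belongs to $W^{1,1}(B_\rho)$ because $f \ge 0$ does, and the composition with a piecewise-linear Lipschitz function preserves $W^{1,1}$. Three properties of $w$ will drive the whole argument: first, $w \equiv 0$ on the set $A := \{f \le k\}$, whose measure is exactly $m_n(B_\rho) - m_n(\{f>k\})$ and which the hypothesis is about; second, $w \equiv l-k$ on $\{f > l\}$, so that
\[
 \int_{B_\rho} w\,dx \ge (l-k)\,m_n(\{f>l\});
\]
and third, by the chain rule one has $\nabla w = \nabla f \cdot \chi_{\{k < f \le l\}}$ a.e.

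Next I would apply the standard $L^1$-Poincar\'e inequality on the ball $B_\rho$: with $\bar{w} := \frac{1}{|B_\rho|}\int_{B_\rho} w\,dx$,
\[
 \|w - \bar{w}\|_{L^1(B_\rho)} \le C(n)\,\rho\,\|\nabla w\|_{L^1(B_\rho)}.
\]
Because $w \equiv 0$ on $A$, the identity $\bar{w}\,m_n(A) = -\int_A (w - \bar{w})\,dx$ together with $|A| \le |B_\rho|$ yields
\[
 |\bar{w}|\,m_n(A) \le \|w - \bar{w}\|_{L^1(B_\rho)} \le C(n)\,\rho\,\|\nabla w\|_{L^1(B_\rho)},
\]
and the triangle inequality then gives
\[
 \int_{B_\rho} w\,dx \le \|w - \bar{w}\|_{L^1(B_\rho)} + |\bar{w}|\,|B_\rho| \le \frac{C(n)\,\rho\,|B_\rho|}{m_n(A)}\,\|\nabla w\|_{L^1(B_\rho)}.
\]

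Combining the lower bound for $\int_{B_\rho} w\,dx$ with this upper bound, using $|B_\rho| = c_n \rho^n$, the formula for $\nabla w$, and $m_n(A) = m_n(B_\rho) - m_n(\{f>k\})$, yields the asserted inequality
\[
 (l-k)\,m_n(\{f>l\}) \le \frac{C(n)\,\rho^{n+1}}{m_n(B_\rho) - m_n(\{f>k\})}\int_{\{k < f \le l\}}|\nabla f|\,dx.
\]
There is no substantial obstacle: the creative step is the choice of truncation $w$, after which everything is routine. The only minor technical point is justifying the chain rule for $w$ when $f \in W^{1,1}$, which is standard because the truncation map $s \mapsto \min\{s_+,\,l-k\}$ is Lipschitz.
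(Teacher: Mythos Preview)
Your proof is correct and uses the same truncation $w=\min\{(f-k)_+,\,l-k\}$ as the paper (where it is called $g$; note that the paper's ``$\max$'' is evidently a typo for ``$\min$'', otherwise $g$ would never vanish). The difference lies in which Poincar\'e-type inequality is invoked afterwards. The paper appeals to a dedicated lemma (its Lemma~\ref{lem:8}, essentially De~Giorgi's isoperimetric lemma) proved there by a potential-theoretic argument, which yields the sharper bound
\[
  (l-k)\,m_n(\{f>l\})\le \frac{C_n\rho^n\,m_n(\{f>l\})^{1/n}}{m_n(\{f\le k\})}\int_{\{k<f\le l\}}|\nabla f|\,dx,
\]
and then tacitly bounds $m_n(\{f>l\})^{1/n}\le C\rho$. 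You instead use the standard mean-value Poincar\'e inequality on $B_\rho$ together with the observation that $w$ vanishes on $A$, going directly to the stated form with $\rho^{n+1}$. Your route is more elementary and self-contained, at the cost of not producing the intermediate sharp version; the paper's route gives a slightly stronger intermediate estimate but requires proving the auxiliary lemma.
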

 For the proof of Proposition \ref{prop:3}, we need the following
 Poincar\'e inequalities:
 \begin{lemma}%
  [Lady{\v{z}}enskaja-Solonnikov-Ural'ceva~{\cite[Lemma 5.1 in p.89]{MR0241822}}]
  \label{lem:8}
  Let $g\in W^{1,1}(B_{\rho})$ ne a non-negative function 
  and let $N_0:=\{g=0\}$. Let $\eta(x)=\eta(|x|)$ be a decreasing
  function of $|x|$ satisfying $0\leq\eta\leq1$ and
  $\eta\big|_{N_0}\equiv1$. Then for measurable set $N\subset B_\rho$,
  we have
  \[
   \int_Ng(x)\eta(x)\,dx
  \leq \frac{C_n\rho^n}{m_n(N_0)}m_n(N)^\frac1n\int_{B_\rho}|\nabla g(x)|\eta(x)\,dx.
  \]
 \end{lemma}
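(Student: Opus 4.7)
The plan is to first establish an unweighted Poincaré-type inequality on any ball $B_r \subseteq B_\rho$ containing $N_0$, and then use a layer-cake decomposition of the radial weight $\eta$ to upgrade it to the claimed weighted estimate.

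For the unweighted step, fix $x \in B_r$ and use that $g(y)=0$ for $y \in N_0$ to write, by the fundamental theorem of calculus,
\[
g(x) = -\int_0^1 \nabla g((1-t)x + ty) \cdot (y-x)\,dt.
\]
Averaging over $y \in N_0$ and passing to polar coordinates around $x$ (so $y = x+s\omega$, $dy = s^{n-1}\,ds\,d\omega$, and $s \leq 2r$ because $B_r$ is convex), I swap the order of the $s$-integration with the inner radial integration of $|\nabla g|$. The resulting Riesz-potential-type bound is
\[
g(x) \leq \frac{C_n r^n}{m_n(N_0)} \int_{B_r} \frac{|\nabla g(y)|}{|x-y|^{n-1}}\,dy,
\]
where convexity of $B_r$ ensures the entire segment from $x$ to $y$ stays in $B_r$, so only gradients in $B_r$ appear. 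Then I integrate over $x \in N \cap B_r$, swap the order of integration, and apply the elementary rearrangement inequality $\int_N |x-y|^{-(n-1)}\,dx \leq C_n m_n(N)^{1/n}$ (maximized when $N$ is a ball centered at $y$) to obtain
\[
\int_{N \cap B_r} g\,dx \leq \frac{C_n\, r^n\, m_n(N \cap B_r)^{1/n}}{m_n(N_0)} \int_{B_r} |\nabla g|\,dx.
\]

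To upgrade to the weighted inequality, I use that $\eta$ is radially decreasing, $0 \leq \eta \leq 1$, with $\eta|_{N_0}\equiv 1$. Hence for each $\tau \in [0,1)$ the super-level set $\{\eta>\tau\}$ is a ball $B_{r(\tau)} \subseteq B_\rho$, and since $\eta \equiv 1$ on $N_0$, this ball contains $N_0$. By the layer-cake formula,
\[
\int_N g\eta\,dx = \int_0^1 \int_{N \cap B_{r(\tau)}} g\,dx\,d\tau.
\]
Applying the unweighted step with $r = r(\tau)$ and using $r(\tau) \leq \rho$ together with $m_n(N \cap B_{r(\tau)}) \leq m_n(N)$ gives
\[
\int_N g\eta\,dx \leq \frac{C_n \rho^n m_n(N)^{1/n}}{m_n(N_0)} \int_0^1 \int_{B_{r(\tau)}} |\nabla g|\,dx\,d\tau,
\]
and the iterated integral on the right equals $\int_{B_\rho} |\nabla g|\eta\,dx$ by another application of layer cake. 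This yields the desired inequality.

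The main technical obstacle is keeping the Riesz-potential representation confined to $B_r$ rather than the larger ball $B_{2r}(x)$ that would naively arise from the polar change of variables; this is resolved by the observation that the segment from $x \in B_r$ to any $y \in N_0 \subseteq B_r$ never leaves $B_r$, so the $s$-range is cut off at the exit point from $B_r$. The only other nontrivial ingredient is the isoperimetric-type bound $\int_N |x-y|^{1-n}\,dx \leq C_n m_n(N)^{1/n}$, which is classical (symmetric decreasing rearrangement of $|x-y|^{1-n}$). Everything else reduces to Fubini and the radial structure of $\eta$.
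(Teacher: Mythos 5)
Your proof is correct, but it handles the weight $\eta$ by a genuinely different device than the paper. The unweighted core is the same in both arguments: write $g(x)=g(x)-g(y)$ for $y$ in the zero set, integrate $|\nabla g|$ along the segment, average over $y\in N_0$, pass to polar coordinates to get a Riesz-potential bound with kernel $|x-y|^{1-n}$, and then use the elementary estimate $\int_N|x-y|^{1-n}\,dx\leq C_n\,m_n(N)^{1/n}$ (which the paper proves by splitting at $\delta=m_n(N)^{1/n}$ rather than by rearrangement, but that is the same fact). Where you diverge is in how $\eta$ enters: the paper proves the pointwise monotonicity $\eta(x)\leq\eta(x+r\omega)$ for every point on the segment from $x$ to $x'\in N_0$ (its inequality \eqref{eq:23}, checked by the two cases $|x|\leq|x'|$ and $|x|>|x'|$, using $\eta\big|_{N_0}\equiv1$ and radial monotonicity), and thus carries the weight through the fundamental-theorem-of-calculus representation directly; you instead prove the estimate unweighted on each centered ball containing $N_0$ and recover the weighted inequality by the layer-cake decomposition $\eta=\int_0^1\chi_{\{\eta>\tau\}}\,d\tau$, using that each superlevel set $\{\eta>\tau\}$, $\tau<1$, is a centered ball inside $B_\rho$ that contains $N_0$ (precisely because $\eta\big|_{N_0}\equiv1$, $\eta\leq1$ and $\eta$ is radially decreasing). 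Both routes exploit exactly the same structural hypotheses on $\eta$; yours buys a cleaner modular structure (a genuinely unweighted Poincar\'e-type lemma plus two applications of Fubini--Tonelli, valid since $g,|\nabla g|\geq0$) and avoids the segmentwise case analysis, and it also treats $n=1$ uniformly (the kernel is trivial there), whereas the paper argues $n=1$ separately; the paper's route avoids the extra $\tau$-integration layer and keeps the proof entirely pointwise. One small point worth making explicit if you write this up: the superlevel sets may be open or closed balls depending on continuity of $\eta$, but either way they are convex, contain $N_0$, have radius at most $\rho$, and differ from an open ball by a null set, which is all the argument needs.
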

 
 \begin{proof}%
  First we consider the case of $n\geq2$. For $x\in N\,,\,x'\in N_0$, we
  have
  \[
   g(x)=g(x)-g(x')=-\int_0^{|x'-x|}\frac{d}{dr}g(x+r\omega)\,dr
  \leq\int_0^{|x'-x|}|\nabla g(x+r\omega)|\,dr
  \]
  where $\omega=\frac{x'-x}{|x'-x|}$. We now show
  \begin{equation}
   \label{eq:23}
    \eta(x)\leq\eta(x+r\omega)\quad\text{for}\quad 0<r\leq|x'-x|.
  \end{equation}  
  Either if $|x|\leq|x'|$, then $x+r\omega\in B_{|x'|}$ by the convexity
  of $B_{|x'|}$. By the monotonicity of $\eta$, we have
  $\eta(x+r\omega)\geq\eta(x')=1$.
  Otherwise, if $|x|>|x'|$, then $x+r\omega\in B_{|x|}$. Since
  $\eta(x+r\omega)\geq\eta(x)$, we obtain \eqref{eq:23}. 
  
  By \eqref{eq:23}, we have
  \[
   g(x)\eta(x)
  \leq\int_0^{|x'-x|}|\nabla g(x+r\omega)|\eta(x+r\omega)\,dr.
  \]
  Integrating over $x\in N$ and $x'\in N_0$, we have
  \[
   m_n(N_0)\int_Ng(x)\eta(x)\,dx
  \leq\int_N\,dx\int_{N_0}\,dx'\int_0^{|x'-x|}
  |\nabla g(x+r\omega)|\eta(x+r\omega)\,dr.
  \]
  Let $g(x)=\eta(x)=0$ on $x\in \R^n\setminus
  B_\rho$. Introducing the polar coordinate, we obtain
  \[
   \begin{split}
    &\quad\int_{N_0}\,dx'\int_0^{|x'-x|}|\nabla
    g(x+r\omega)|\eta(x+r\omega)\,dr \\
    &\leq \int_{B_{2\rho(x)}}\,dx'\int_0^{|x'-x|}|\nabla
    g(x+r\omega)|\eta(x+r\omega)\,dr \\
    &\leq \int_{B_{2\rho(x)}}\,dx'\int_0^{|x'-x|}|\nabla
    g(x+r\omega)|\eta(x+r\omega)\,dr \\
    &=\int_0^{2\rho}s^{n-1}\,ds\int_{\Sp^{n-1}}\,d\sigma
    \int_0^s\frac{|\nabla
    g(x+r\omega)|\eta(x+r\omega)}{r^{n-1}}r^{n-1}\,dr 
    \quad (x'=s\sigma+x)
    \\ 
    &=\int_0^{2\rho}s^{n-1}\,ds\int_{B_s(x)}\frac{|\nabla
    g(y)|\eta(y)}{|x-y|^{n-1}}\,dy
    \quad (y=x+r\sigma)\\
    &\leq\frac{(2\rho)^n}{n}\int_{B_\rho}\frac{|\nabla
    g(y)|\eta(y)}{|x-y|^{n-1}}\,dy.
   \end{split}
  \]
  Therefore, 
  \[
  \begin{split}
   m_n(N_0)\int_Ng(x)\eta(x)\,dx
   &\leq\frac{(2\rho)^n}{n}\int_N\,dx\int_{B_\rho}\frac{|\nabla
   g(y)|\eta(y)}{|x-y|^{n-1}}\,dy \\
   &=\frac{(2\rho)^n}{n}\int_{B_\rho}|\nabla
   g(y)|\eta(y)\,dy\int_{N}\frac{1}{|x-y|^{n-1}}\,dx.
  \end{split}  
  \]
  We now show the following estimate:
  \begin{equation}
   \label{eq:38}
    \int_{N}\frac{1}{|x-y|^{n-1}}\,dx
    \leq (1+\mathscr{H}^{n-1}(\Sp^{n-1}))m_n(N)^\frac1n,
  \end{equation}  
  where $\mathscr{H}^{n-1}(\Sp^{n-1})$ is the $(n-1)$-dimensional
  Hausdorff measure of the $(n-1)$-dimensional unit sphere.
  To show \eqref{eq:38}, let $\delta>0$ to be chosen
  later. We split the integral
  \[
  \begin{split}
   &\quad\int_{N}\frac{1}{|x-y|^{n-1}}\,dx \\
   &\leq \int_{N\cap\{|x-y|\leq\delta\}}\frac{1}{|x-y|^{n-1}}\,dx
   +\int_{N\cap\{|x-y|\geq\delta\}}\frac{1}{|x-y|^{n-1}}\,dx 
   =:I_1+I_2.
  \end{split}  
  \]
  By a simple calculation, we obtain
  \[
   \begin{split}
    I_1&\leq\int_0^\delta\frac{r^{n-1}}{r^{n-1}}\,dr\int_{\Sp^{n-1}}\,d\sigma
    =\delta\mathscr{H}^{n-1}(\Sp^{n-1}), \\
    I_2&\leq\int_N\frac1{\delta^{n-1}}\,dx\leq\delta^{1-n}m_n(N).
   \end{split}  
  \]
  Taking $\delta=m_n(N)^\frac1n$, we have
  $I_1+I_2\leq(1+\mathscr{H}^{n-1}(\Sp^{n-1}))m_n(N)^\frac1n$ and we
  obtain \eqref{eq:38}.

  Using \eqref{eq:38}, we have
\begin{equation*}
    m_n(N_0)\int_{N}g(x)\eta(x)\,dx 
   \leq\frac{2^n(1+\mathscr{H}^{n-1}(\Sp^{n-1}))}{n}\rho^nm_n(N)^\frac1n\int_{B_\rho}|\nabla
   g(y)|\eta(y)\,dy.
\end{equation*}
  We consider the case $n=1$. For $x\in N$ and $x'\in N_0$, we
  have
  \[
   g(x)=g(x)-g(x')=\int_{x'}^x\frac{d}{dy}g(y)\,dy
  \leq\left|\int_{x'}^x\biggl|\frac{d}{dy}g(y)\biggr|\,dy\right|.
  \]
  Since 
  \[
   g(x)\eta(x)
  \leq\left| \int_{x'}^x\biggl|\frac{d}{dy}g(y)\eta(y)\biggr|\,dy\right|
  \leq\int_{1}^{-1}\biggl|\frac{d}{dy}g(y)\eta(y)\biggr|\,dy,
  \]
  we obtain
  \[
   \int_Ng(x)\eta(x)\,dx
  \leq m_n(N)\int_{-1}^1|\nabla g(x)|\eta(x)\,dx.
  \]
 \end{proof}

\begin{proof}%
 [Proof of Proposition \ref{prop:3}]
 Let 
 \[
  \begin{aligned}
   g(x)&:=\max\{l-k\,,\,(f-k)_+\}\in W^{1,1}(B_\rho), &
   N_0&:=\{f<k\},\\
   \eta(x)&\equiv1, & N&:=\{f>l\}.\\
  \end{aligned}
 \]
 Then, by the Lemma \ref{lem:8}, we have
 \[
 \int_Ng(x)\,dx
 \leq\frac{C_n\rho^nm_n\bigl(N\bigr)^\frac1n}%
 {m_n\bigl(N_0\bigr)}\int_{B_\rho}|\nabla g(x)|\,dx,
 \]
 hence
 \[
  (l-k)m_n\bigl(\{f>l\}\bigr)
 \leq\frac{C_n\rho^nm_n\bigl(\{f>l\}\bigr)^\frac1n}%
 {m_n\bigl(\{f<k\}\bigr)}\int_{\{k<f\leq l\}}|\nabla f(x)|\,dx.
 \]
\end{proof}

\subsection{The recursive inequalities}

\begin{lemma}%
 [Lady{\v{z}}enskaja-Solonnikov-Ural'ceva~{\cite[Lemma 5.7 in p.96]{MR0241822}}]
 \label{lem:9}
 Let $C,\varepsilon,\delta>0$ and $b\geq1$. Assume that sequences
 $\{Y_n\}_{n=0}^\infty\,,\,\{Z_n\}_{n=0}^\infty\subset(0,\infty)$
 satisfy
 \begin{equation}
  \label{eq:35}
  \begin{split}
   Y_{n+1}&\leq Cb^n(Y_n^{1+\delta}+Y_n^\delta Z_n^{1+\varepsilon}), \\
   Z_{n+1}&\leq Cb^n(Y_n+Z_n^{1+\varepsilon}).
  \end{split}   
 \end{equation}
 Let 
 \[
 d:=\min\left\{\delta,\frac{\varepsilon}{1+\varepsilon}\right\}\,,\,
 \lambda=\min\left\{(2C)^{-\frac1\delta}b^{-\frac1{\delta d}}\,,\,
 (2C)^{-\frac{1+\varepsilon}{\varepsilon}}b^{-\frac1{\varepsilon d}}\right\}.
 \]
 Then, if $Y_0\leq\lambda$ and $Z_0\leq\lambda^\frac{1}{1+\varepsilon}$,
 we obtain
 \begin{equation}
  \label{eq:36}
   Y_n\leq\lambda b^{-\frac{n}{d}}\,,\,
   Z_n\leq(\lambda b^{-\frac{n}{d}})^\frac{1}{1+\varepsilon}.
  \end{equation}
 \end{lemma}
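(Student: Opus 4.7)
The plan is to establish \eqref{eq:36} by induction on $n \geq 0$. The base case $n=0$ reduces to the standing hypotheses $Y_0 \leq \lambda$ and $Z_0 \leq \lambda^{1/(1+\varepsilon)}$, so there is nothing to check.

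For the inductive step, I assume $Y_n \leq \lambda b^{-n/d}$ and $Z_n \leq \lambda^{1/(1+\varepsilon)} b^{-n/(d(1+\varepsilon))}$ and plug these bounds directly into \eqref{eq:35}. The key observation is that with these bounds the two summands inside each parenthesis have exactly the same size: $Y_n^{1+\delta}$ and $Y_n^\delta Z_n^{1+\varepsilon}$ are both bounded by $\lambda^{1+\delta} b^{-n(1+\delta)/d}$, and $Y_n$ and $Z_n^{1+\varepsilon}$ are both bounded by $\lambda b^{-n/d}$. After absorbing the resulting factor of $2$, the inductive step reduces to the two geometric inequalities
\[
Y_{n+1} \leq 2C\lambda^{1+\delta} b^{n(1-(1+\delta)/d)}, \qquad Z_{n+1} \leq 2C\lambda b^{n(1-1/d)}.
\]

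Comparing these with the target bounds at level $n+1$ (raising the bound on $Z_{n+1}$ to the power $1+\varepsilon$ before comparison), the verification boils down to checking $2C\lambda^\delta b^{n(1-\delta/d)+1/d} \leq 1$ for the $Y$ recurrence and $(2C)^{1+\varepsilon}\lambda^\varepsilon b^{n((1+\varepsilon)-\varepsilon/d)+1/d} \leq 1$ for the $Z$ recurrence. The definition $d=\min\{\delta,\varepsilon/(1+\varepsilon)\}$ is tailored precisely so that both $1-\delta/d \leq 0$ and $(1+\varepsilon)-\varepsilon/d \leq 0$; combined with $b \geq 1$ this kills the $n$-dependent factors, and one is left with the uniform-in-$n$ conditions $2C\lambda^\delta b^{1/d} \leq 1$ and $(2C)^{1+\varepsilon}\lambda^\varepsilon b^{1/d} \leq 1$, which are exactly the two constraints imposed in the definition of $\lambda$.

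There is no genuine obstacle: the proof is a balance-of-exponents calculation, and the specific formulas for $d$ and $\lambda$ are engineered to make both inductive inequalities close simultaneously. The only point requiring care is to keep the $Z_n$ bound in the form $Z_n^{1+\varepsilon} \leq \lambda b^{-n/d}$ when substituting, so that its contribution pairs cleanly with that of $Y_n$ on the same geometric scale; this matching of scales, rather than any subtle estimate, is what drives the induction through.
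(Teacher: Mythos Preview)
Your proof is correct and follows essentially the same approach as the paper's: induction on $n$, pairing the two summands in each recurrence so they share the same scale, arriving at $Y_{n+1}\leq 2C\lambda^{1+\delta}b^{n(1-(1+\delta)/d)}$ and $Z_{n+1}\leq 2C\lambda b^{n(1-1/d)}$, and closing using $d\le\delta$, $d\le\varepsilon/(1+\varepsilon)$ together with the two defining constraints on $\lambda$. The only cosmetic difference is that you isolate the two uniform-in-$n$ conditions $2C\lambda^\delta b^{1/d}\le 1$ and $(2C)^{1+\varepsilon}\lambda^\varepsilon b^{1/d}\le 1$ explicitly, whereas the paper writes the same verification as a direct chain of inequalities.
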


\begin{proof}%
 Inequalities \eqref{eq:36} are valid for $n=0$. We prove \eqref{eq:36} by
 induction. If \eqref{eq:36} hold for $n$, then by \eqref{eq:35}, we have
 \[
 Y_{n+1}\leq 2C\lambda^{1+\delta}b^{n(1-\frac{1+\delta}{d})}\,,\,
 Z_{n+1}\leq 2C\lambda b^{n(1-\frac1d)}.
 \]
 Since $\lambda\leq(2C)^{-\frac1\delta}b^{-\frac1{\delta d}}$ and
 $d\leq\delta$, we have
 \[
 2C\lambda^{1+\delta}b^{n(1-\frac{1+\delta}{d})}
 \leq\lambda b^{-\frac1d}b^{-\frac{n}{d}+n(1-\frac{\delta}{d})}
 \leq \lambda b^{-\frac{n+1}{d}}.
 \]
 Similarly, since
 $\lambda\leq(2C)^{-\frac{1+\varepsilon}{\varepsilon}}b^{-\frac1{\varepsilon
 d}}$, we obtain
 \[
 \begin{split}
  2C\lambda b^{n(1-\frac1d)}
  &=2C\lambda^\frac{\varepsilon}{1+\varepsilon}
  \lambda^\frac{1}{1+\varepsilon}b^{-\frac{n+1}{(1+\varepsilon)d}}
  b^{n(1-\frac1d)+\frac{n+1}{(1+\varepsilon)d}} \\
  &\leq(\lambda b^{-\frac{n+1}{d}})^\frac{1}{1+\varepsilon}
  b^{n(1-\frac{\varepsilon}{(1+\varepsilon)d})}.
 \end{split} 
\]
 Since $d\leq\frac{\varepsilon}{1+\varepsilon}$, we find
 $1-\frac{\varepsilon}{(1+\varepsilon)d}\leq0$ and hence we have \eqref{eq:36}
 for $n+1$.
\end{proof}

\subsection{The weak $L^p$ spaces and the Lorentz spaces}
Let $\Omega\subset\R^n$ be a domain (not necessary bounded).

 \begin{definition}[The Lorentz spaces]
  For $1\leq p< \infty$, we define the Lorentz space
  $L^{p,\infty}(\Omega)$ by
  \[
  L^{p,\infty}(\Omega)
  :=\{f\in L^1_{\mathrm{loc}}(\Omega)\,:\,\lambda^p\mu_{|f|}(\lambda)\ \text{is
  bounded for all}\ \lambda>0\}
  \]
  where
  $\mu_{|f|}(\lambda):=m_n\bigl(\{|f|>\lambda\}\bigr)$. 
 \end{definition}

 \begin{proposition}
  [cf. Benilan-Brezis-Crandall {\cite[pp.548]{MR0390473}}]
  \label{prop:5}
  For $1<p<\infty$, we have
  \[
  \frac{p-1}{p^{1+\frac1p}}\|f\|_{L^p_\mathrm{w}(\Omega)}
  \leq \sup_{\lambda>0}\lambda\mu_{|f|}(\lambda)^\frac1p
  \leq \|f\|_{L^p_\mathrm{w}(\Omega)}.
 \]
 \end{proposition}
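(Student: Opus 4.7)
The plan is to establish the two inequalities of Proposition~\ref{prop:5} separately by elementary distribution-function arguments.

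For the right-hand inequality, I fix $\lambda>0$ and observe that, by inner regularity of Lebesgue measure, $\mu_{|f|}(\lambda)=\sup\{|K|:K\subset\{|f|>\lambda\}\text{ compact}\}$. On any such compact $K$ the pointwise bound $\lambda\leq|f|$ combined with the definition of the weak $L^p$ norm gives
\[
\lambda|K|\leq\int_K|f|\,dx\leq\|f\|_{L^p_\mathrm{w}(\Omega)}|K|^{1-\frac1p},
\]
so that $\lambda|K|^{1/p}\leq\|f\|_{L^p_\mathrm{w}(\Omega)}$. Taking the supremum first over such $K$ and then over $\lambda>0$ yields $\sup_{\lambda>0}\lambda\mu_{|f|}(\lambda)^{1/p}\leq\|f\|_{L^p_\mathrm{w}(\Omega)}$.

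For the left-hand inequality, I set $A:=\sup_{\lambda>0}\lambda\mu_{|f|}(\lambda)^{1/p}$, so that $\mu_{|f|}(\lambda)\leq A^p\lambda^{-p}$ for every $\lambda>0$. For an arbitrary compact $K\subset\Omega$ with $|K|>0$, I use the layer-cake formula
\[
\int_K|f|\,dx=\int_0^\infty m_n(K\cap\{|f|>\lambda\})\,d\lambda
\]
together with the pointwise bound $m_n(K\cap\{|f|>\lambda\})\leq\min(|K|,A^p\lambda^{-p})$, and I split the integral at the balancing threshold $\lambda_0:=A|K|^{-1/p}$. The lower piece is bounded by $\lambda_0|K|=A|K|^{1-1/p}$, the upper piece by $\int_{\lambda_0}^\infty A^p\lambda^{-p}\,d\lambda=\frac{A}{p-1}|K|^{1-1/p}$, so altogether
\[
\int_K|f|\,dx\leq\frac{p}{p-1}\,A\,|K|^{1-\frac1p}.
\]
Dividing by $|K|^{1-1/p}$ and taking the supremum over compact $K\subset\Omega$ gives $\frac{p-1}{p}\|f\|_{L^p_\mathrm{w}(\Omega)}\leq A$, which is a slightly sharper form of the stated inequality, since $p\leq p^{1+1/p}$, and in particular implies it.

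No step presents a real obstacle: both directions reduce to short manipulations of the distribution function together with the optimization in $\lambda_0$. The only subtlety is the passage from the super-level set $\{|f|>\lambda\}$ (which is merely measurable) to compact approximants inside $\Omega$, which is handled by inner regularity of Lebesgue measure; the trivial cases $|K|=0$ and $f\equiv0$ are immediate.
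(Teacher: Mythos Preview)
Your proof is correct and follows essentially the same distribution-function/layer-cake approach as the paper: both directions bound $\int_K|f|$ via the level sets and optimize over a threshold $\lambda_0$. Your version is in fact slightly cleaner---by applying the layer-cake formula directly to $\int_K|f|$ with the bound $\min(|K|,A^p\lambda^{-p})$ you obtain the sharper constant $\frac{p-1}{p}$ in place of the paper's $\frac{p-1}{p^{1+1/p}}$, and your use of inner regularity handles the compactness issue more carefully than the paper's choice $K=\{|f|>\lambda\}\cap B_\rho$.
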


\begin{proof}%
 First, we show $\sup_{\lambda>0}\lambda\mu_{|f|}(\lambda)^\frac1p
 \leq \|f\|_{L^p_{\mathrm{w}}(\Omega)}$. For $\rho,\lambda>0$, 
 we take $K=\{|f|>\lambda\}\cap B_\rho$. 
 Then we have
 \[
 \begin{split}
  \|f\|_{L^p_{\mathrm{w}}(\Omega)}
  &\geq
  m_n\bigl(\{x\in \Omega\cap B_\rho:|f(x)|>\lambda\}\bigr)^{\frac1p-1}
  \int_{\{|f|>\lambda\}\cap B_\rho}
  |f(x)|\,dx \\
  &\geq\lambda m_n\bigl(\{x\in \Omega\cap B_\rho:|f(x)|>\lambda\}\bigr)^\frac1p.
 \end{split}
\]
 Letting $\rho\rightarrow\infty$, we find
 \[
  \lambda\mu_{|f|}(\lambda)^\frac1p\leq\|f\|_{L^p_{\mathrm{w}}(\Omega)}.
 \]
 Second, we show
 $\frac{p-1}{p^{1+\frac1p}}\|f\|_{L^p_{\mathrm{w}}(\Omega)} \leq
 \sup_{\lambda>0}\lambda\mu_{|f|}(\lambda)^\frac1p$.  We fix
 $\lambda_0>0$. For measurable set $K\subset\Omega$, we have
 \[
  \int_K|f(x)|\,dx
 \leq \lambda_0m_n\bigl(K\bigr)+\int_{\{|f|>\lambda_0\}}|f(x)|\,dx.
 \]
 By the above inequality, we have
 \[
 \begin{split}
  \int_{\{|f|>\lambda_0\}}|f(x)|\,dx
  &=\int_0^\infty 
  m_n\Big(\{x\in\{|f|>\lambda_0\}:|f(x)|>\lambda\}\Big)\,d\lambda \\
  &=\int_0^{\lambda_0}m_n\bigl(\{|f|>\lambda_0\}\bigr)\,d\lambda
  +\int_{\lambda_0}^\infty m_n\bigl(\{|f|>\lambda\}\bigr)\,d\lambda \\
  &=\lambda_0\mu_{|f|}(\lambda_0)
  +\int_{\lambda_0}^\infty\mu_{|f|}(\lambda)\,d\lambda \\
  &\leq\lambda_0^{1-p}\sup_{\lambda>0}\lambda^p\mu_{|f|}(\lambda)
  +\sup_{\lambda>0}\lambda^p\mu_{|f|}(\lambda)
  \int_{\lambda_0}^\infty\lambda^{-p}\,d\lambda \\
  &=\frac{p}{p-1}\lambda_0^{1-p}
  \sup_{\lambda>0}\lambda^p\mu_{|f|}(\lambda).
 \end{split} 
 \]
 Taking
 $\lambda_0^pm_n\bigl(K\bigr)=p\sup_{\lambda>0}\lambda^p\mu_{|f|}(\lambda)$,
 we find
 \[
 \int_K|f(x)|\,dx
 \leq \frac{p}{p-1}(p\sup_{\lambda>0}\lambda^p\mu_{|f|}(\lambda))^\frac1p|K|^{1-\frac1p}
 \]
 or
 \[
 \|f\|_{L^p_\mathrm{w}(\Omega)} 
 \leq \frac{p^{1+\frac1p}}{p-1}\sup_{\lambda>0}\lambda\mu_{|f|}(\lambda)^\frac1p.
 \]
\end{proof}


\textbf{Acknowledgments}: The author would like to express his deepest
gratitude to Professor Takayoshi Ogawa for his valuable comments and
encouragement. The author also wishes to thank the referee for his
valuable suggestions. This work is supported by the JSPS Research
Fellowships for Young Scientists and the JSPS Grant-in-Aid for the JSPS
fellows \#21-1281.




\nocite{MR2286292,MR1218742}

\end{document}